\newtheorem{thm}{Theorem}[section]
\newtheorem{cor}[thm]{Corollary}
\newtheorem{lem}{Lemma}[section]
\newtheorem{prop}{Proposition}[section]
\theoremstyle{definition}
\newtheorem{defn}{Definition}[section]
\theoremstyle{remark}
\newtheorem{rem}{Remark}[section]
\numberwithin{equation}{section}
\newcommand{\Ccal}{\mathcal{C}}
\newcommand{\cf}{\mathcal{F}}
\newcommand{\ci}{\mathcal{I}}
\newcommand{\rmd}{\mathrm {d}}
\newcommand{\cl}{\mathcal{L}}
\newcommand{\ck}{\mathcal{K}}
\newcommand{\ct}{\mathcal{T}}
\newcommand{\bu}{\mathbf{u}}
\newcommand{\bv}{\mathbf{v}}
\newcommand{\bw}{\mathbf{w}}
\newcommand{\bff}{\mathbf{f}}
\newcommand{\bx}{\mathbf{x}}
\newcommand{\by}{\mathbf{y}}
\newcommand{\bH}{\mathbf{H}}
\newcommand{\bL}{\mathbf{L}}
\newcommand{\bg}{\mathbf{g}}
\newcommand{\bh}{\mathbf{h}}
\newcommand{\bp}{\mathbf{p}}
\newcommand{\bq}{\mathbf{q}}
\newcommand{\bpsi}{\bm{\psi}}
\newcommand{\bvarphi}{\bm{\varphi}}
\newcommand{\bphi}{\bm{\phi}}
\newcommand{\bxi}{\bm{\xi}}
\newcommand{\bnu}{\bm{\nu}}
\newcommand{\beq}{\begin{equation}}
\newcommand{\eeq}{\end{equation}}
\newcommand{\supp}{{\rm supp}}
\title[An effective medium theory for impenetrable elastic obstacles]
{Effective medium theory for embedded obstacles in elasticity with applications to inverse problems}
\author{Zhengjian Bai}
\address{School of Mathematical Sciences, Xiamen University, Xiamen 361005, P.R. China}
\email{zjbai@xmu.edu.cn}
\author{Huaian Diao}
\address{School of Mathematics and Statistics, Northeast Normal University,  Changchun 130024\\
 \indent\,\,P.R. China}
\email{hadiao@nenu.edu.cn}
\author{Hongyu Liu}
\address{Department of Mathematics, City University of Hong Kong, Kowloon, Hong Kong, China}
\email{hongyu.liuip@gmail.com; hongyuliu@hkbu.edu.hk}
\author{Qingle Meng}
\address{School of Mathematical Sciences, Xiamen University, Xiamen 361005, P.R. China}
\email{qinglemeng@yahoo.com}
\begin{document}
\maketitle

\begin{abstract}

We consider the time-harmonic elastic wave scattering from a general (possibly anisotropic) inhomogeneous medium with an embedded impenetrable obstacle. We show that the impenetrable obstacle can be effectively approximated by an isotropic elastic medium with a particular choice of material parameters. We derive sharp estimates to rigorously verify such an effective approximation. Our study is strongly motivated by the related studies of two challenging inverse elastic problems including the inverse boundary problem with partial data and the inverse scattering problem of recovering mediums with buried obstacles. The proposed effective medium theory readily yields some interesting applications of practical significance to these inverse problems.

\medskip

\medskip

\noindent{\bf Keywords:}~~elastic scattering, embedded obstacle, effective medium theory, asymptotic analysis, variational analysis, inverse elastic problem, partial data

\noindent{\bf 2010 Mathematics Subject Classification:}~~35B34; 74E99; 74J20

\end{abstract}

\section{Introduction}

\subsection{Motivations and background}\label{subsect:motivation}
Our study is strongly motivated by the related studies of two challenging inverse elastic problems, which we shall discuss in what follows. To that end, we first introduce the Lam\'e system that governs the elastic wave propagation in $\mathbb{R}^n$, $n=2, 3$. Throughout, we let $\mathcal{C}$ and $\rho$ signify the constitutive material parameters of an elastic medium. Here, $\mathcal{C}(\mathbf{x})=(\mathcal{C}_{ijkl}(\mathbf{x}))_{i,j,k,l=1}^n$ is a four-rank real-valued tensor satisfying the following symmetry property:
\begin{equation}\label{eq:symm1}
\mathcal{C}_{ijkl}=\mathcal{C}_{klij}\quad \mbox{and}\quad \mathcal{C}_{ijkl}=\mathcal{C}_{jikl}=\mathcal{C}_{ijlk},\quad i,j,k,l=1, 2, \ldots, n.
\end{equation}
$\rho(\mathbf{x})$ is a bounded measurable complex-valued function with $\Re \rho>0$ and $\Im \rho\geq 0$. Physically, $\mathcal{C}$ signifies the stiffness tensor, and $\Re\rho$ and $\Im\rho$ characterize the density and damping of an elastic medium, respectively. Let $\mathbf{u}(\mathbf{x})=(u_j(\mathbf{x}))_{j=1}^n\in\mathbb{C}^n$ denote the displacement field in the elastic medium. In linear elasticity, one has the following Lam\'e system:
\begin{equation}\label{eq:lame1}
\mathcal{L}_{\mathcal{C}}\mathbf{u}+\omega^2\rho \mathbf{u}={\bf0}, \quad \mathcal{L}_{\mathcal{C}}\mathbf{u}:=\nabla\cdot(\mathcal{C}:\nabla\mathbf{u})=\left(\sum_{j,k,l=1}^n\partial_j(\mathcal{C}_{ijkl}\partial_l u_k)\right)_{i=1}^n,
\end{equation}
where $\omega\in\mathbb{R}_+$ signifies the angular frequency and $\mathcal{L}_{\mathcal{C}}$ is referred to as the Lam\'e operator associated with $\mathcal{C}$. In \eqref{eq:lame1}, the symbol $``:"$ indicates an action of double contraction, which is defined for two matrices $\mathbf{A}=(a_{ij})_{i,j=1}^n$ and $\mathbf{B}=(b_{ij})_{i,j=1}^n$:
\[
\mathbf{A}:\mathbf{B}=\sum_{i,j=1}^n a_{ij} b_{ij}\quad\mbox{and}\quad \mathcal{C}:\mathbf{A}=(\mathcal{C}:\mathbf{A})_{ij}=\left(\sum_{k,l=1}^n\mathcal{C}_{ijkl}a_{kl}\right).
\]
Throughout we assume that the elastic tensor $\mathcal{C}$ satisfies the uniform Legendre ellipticity condition:
\begin{equation}\label{eq:ellip1}
c_{\text{min}}\|\bxi\|_2^2 \leq \bxi:\mathcal{C}:\bxi^*\leq c_{\text{max}}\|\bxi\|_2^2,\quad \forall \,\bxi\in\mathbb{C}^{n\times n}\ \ \mbox{being a symmetric matrix},
\end{equation}
where $c_{\text{min}}$ and $c_{\text{max}}$ are two positive constants. If there exist scalar real functions $\lambda(\mathbf{x})$ and $\mu(\mathbf{x})$ such that
\begin{equation}\label{eq:lame2}
\mathcal{C}_{ijkl}=\lambda\delta_{ij}\delta_{kl}+\mu(\delta_{ik}\delta_{jl}+\delta_{il}\delta_{jk}),
\end{equation}
where $\delta$ is the Kronecker delta function, then the elastic medium is said to be isotropic, otherwise it is said anisotropic.

Let $\Sigma\Subset\mathbb{R}^n$ be a bounded Lipschitz domain. Consider the following boundary value problem associated with the Lam\'e system:
\begin{equation}\label{eq:lame3}
\mathbf{u}\in H^1(\Sigma)^n,\quad \mathcal{L}_{\mathcal{C}}\mathbf{u}+\omega^2\rho\mathbf{u}={\bf0}\ \ \mbox{in}\ \ \Sigma,\quad \mathcal{T}_{\bnu}(\mathbf{u})=\bpsi\in H^{-1/2}(\partial\Sigma)^n\ \ \mbox{on}\ \ \partial\Sigma,
\end{equation}
where $\mathcal{T}_{\bnu}(\mathbf{u}):=\bnu\cdot(\mathcal{C}:\nabla\mathbf{u})$ with $\bnu\in\mathbb{S}^{n-1}$ signifying the exterior unit normal vector to $\partial\Sigma$. It is known that there exists a unique solution to \eqref{eq:lame3}, provided that $\omega$ does not belong to a discrete set (known as the eigenvalues) \cite{Mclean2000}. Assuming that $\omega$ is not an eigenvalue, the following boundary Neumann-to-Dirichlet (NtD) map is well-defined:
\begin{equation}\label{eq:NtD1}
\Lambda_{\Sigma; \mathcal{C}, \rho}: H^{-1/2}(\partial\Sigma)^n\mapsto H^{1/2}(\partial\Sigma)^n, \quad \Lambda_{\Sigma; \mathcal{C}, \rho}(\bpsi)=\mathbf{u}|_{\partial\Sigma},
\end{equation}
where $\mathbf{u}$ is the solution to \eqref{eq:lame3}. The NtD map $\Lambda_{\mathcal{C}, \rho}$ encodes all the possible Cauchy data $(\mathcal{T}_{\bnu}(\mathbf{u})|_{\partial\Sigma}, \mathbf{u}|_{\partial\Sigma})$ associated with the Lam\'e system \eqref{eq:lame3}. An inverse problem of industrial importance arising in the elastic probing is to recover the elastic body $(\Sigma; \mathcal{C}, \rho)$ by the boundary observations, namely:
\begin{equation}\label{eq:ip1}
\Lambda_{\Sigma; \mathcal{C}, \rho}\rightarrow (\Sigma; \mathcal{C}, \rho).
\end{equation}
In practice, it means that one exerts the traction force on the boundary of the elastic body (i.e. $\mathcal{T}_{\bnu}(\mathbf{u})|_{\partial\Sigma}=\bpsi$) to induce the elastic field $\mathbf{u}$ inside the body, and then measures the response on the boundary (i.e. $\mathbf{u}$), and in such a non-destructive way to infer knowledge of the interior of the elastic body. The inverse problem \eqref{eq:ip1} is nonlinear and ill-conditioned and has been extensively and intensively investigated in the literature, see e.g. \cite{Haher1998, BFV14, DFS20, ER02, HL2014, HL2015, NG, NGW} and the references cited therein. In many practical scenarios, one cannot achieve the measurements of the elastic field on the full boundary $\partial \Omega$, and instead, one can only measure on  part of the boundary, say $(\mathcal{T}_{\bnu}(\mathbf{u})|_{\Gamma}, \mathbf{u}|_{\Gamma})$, where $\Gamma\Subset\partial\Sigma$. This is particular the case that $\Sigma$ is not a solid body and possesses a hole, say $\Sigma=\Omega\backslash\overline{D}$, where $D\Subset\Omega$, and $\Omega$ and $D$ are both solid bodies.\footnote{One can think that in two dimensions, both $\Omega$ and $D$ are simply connected. } In such a case, $\partial\Sigma=\partial\Sigma_{\text{interior}}\cup\partial\Sigma_{\text{exterior}}$, where the interior boundary $\partial\Sigma_{\text{interior}}=\partial D$ and the exterior boundary $\partial\Sigma_{\text{exterior}}=\partial\Omega$. From a practical point of view, the interior boundary is inaccessible in the elastic probing, and hence in the inverse problem \eqref{eq:ip1}, one can only exert the input and measure the output on the exterior boundary, namely $\Gamma=\partial\Omega$. That is, one needs to require in \eqref{eq:lame3} that $\mathrm{supp}(\psi)\subset\partial\Sigma_{\text{exterior}}=\partial\Omega$, which leads to the following system:
\begin{equation}\label{eq:lame4}
\mathcal{L}_{\mathcal{C}}\mathbf{u}+\omega^2\rho\mathbf{u}={\bf0}\ \ \mbox{in}\ \ \Omega\backslash\overline{D},\quad \mathcal{T}_{\bnu}(\mathbf{u})={\bf0} \ \ \mbox{on}\ \ \partial D, \quad \mathcal{T}_{\bnu} (\mathbf{u})=\bpsi\in H^{-1/2}(\partial\Omega)^n\ \ \mbox{on}\ \ \partial\Omega.
\end{equation}
Then the inverse problem \eqref{eq:ip1} becomes:
\begin{equation}\label{eq:ip1p}
(\mathcal{T}_{\bnu} (\mathbf{u})|_{\partial\Omega}, \mathbf{u}|_{\partial\Omega})\rightarrow (\Omega\backslash\overline{D}; \mathcal{C}, \rho),
\end{equation}
where $\mathbf{u}\in H^1(\Omega\backslash\overline{D})^n$ is the solution to \eqref{eq:lame4}. The partial-data inverse problem constitutes a class of highly challenging open problems in the literature, and it even remains largely open for the case associated with the differential equation $\nabla\cdot(\sigma\nabla u)=0$ where $\sigma$ is a scalar function \cite{IUY10,KSU} (the so-called Calder\'on's inverse conductivity problem), a fortiori the one associated with the Lam\'e system \eqref{eq:lame4}. We are aware that the partial-data inverse elastic problem was recently studied in \cite{DFS20} following the spirit of the related studies of the partial-data Calder\'on problem within a certain restricted and special setup.

In this paper, we propose a different perspective to tackle the partial-data inverse elastic problem that can work in an extremely general scenario. To that end, we note that physically, $D$ represents a traction-free impenetrable obstacle embedded in the elastic medium $(\Omega\backslash\overline{D}; \mathcal{C}, \rho)$. In what follows, we set $D\oplus(\Omega\backslash\overline{D}; \mathcal{C}, \rho)$ to signify such an elastic object as described above. Let $\Lambda^p_{\Omega\backslash\overline{D}; \mathcal{C}, \rho}: H^{-1/2}(\partial\Omega)^n\mapsto H^{1/2}(\partial\Omega)^n$ denote the partial NtD map associated with the Lam\'e system \eqref{eq:lame4}. That is,
\begin{equation}\label{eq:pntd}
\Lambda^{p}_{\Omega\backslash\overline{D}; \mathcal{C}, \rho}(\bpsi)=\mathbf{u}|_{\partial\Omega},
\end{equation}
where $\mathbf{u}\in H^1(\Omega\backslash\overline{D})^n$ is the solution to \eqref{eq:lame4}. For comparison, we also write $\Lambda^f_{\Omega\backslash\overline{D}; \mathcal{C}, \rho}=\Lambda_{\Sigma; \mathcal{C}, \rho}$, where $\Lambda_{\Sigma; \mathcal{C}, \rho}$ is defined in \eqref{eq:NtD1}, to signify that it encodes the full boundary measurements.
\begin{defn}\label{def:1}
Consider $D\oplus(\Omega\backslash\overline{D}; \mathcal{C}, \rho)$ as described above. If there exist an elastic medium $(\Omega; \widetilde{\mathcal{C}}, \widetilde{\rho})$ with $(\widetilde{\mathcal{C}}, \widetilde{\rho})\big|_{\Omega\backslash\overline{D}}=(\mathcal{C}, \rho)\big|_{\Omega\backslash\overline{D}}$, and $\varepsilon\in\mathbb{R}_+$ with $\varepsilon\ll 1$ such that
\begin{equation}\label{eq:effective1}
\left\|  \Lambda^{p}_{\Omega\backslash\overline{D}; \mathcal{C}, \rho}-\Lambda^f_{\Omega; \widetilde{\mathcal{C}}, \widetilde{\rho} }  \right\|_{\mathcal{L}\big(H^{-1/2}(\partial\Omega)^n, H^{1/2}(\partial\Omega)^n\big) }\leq C \varepsilon,
\end{equation}
where $C$ is a generic positive constant depending on $\Omega, D$ and $\mathcal{C}, \rho, \omega$, then $(\Omega; \widetilde{\mathcal{C}}, \widetilde{\rho})$ is said to be an effective $\varepsilon$-realization of $D\oplus(\Omega\backslash\overline{D}; \mathcal{C}, \rho)$. If $\varepsilon\equiv 0$, then $(\Omega; \widetilde{\mathcal{C}}, \widetilde{\rho})$ is said to be an effective realization of $D\oplus(\Omega\backslash\overline{D}; \mathcal{C}, \rho)$.

\end{defn}

It is conjectured that the unique identifiability holds generically for the aforementioned partial-data inverse problem, namely the correspondence between $ \Lambda^{p}_{\Omega\backslash\overline{D}; \mathcal{C}, \rho}$ and $D\oplus(\Omega\backslash\overline{D}; \mathcal{C}, \rho)$ is one-to-one. It means that the (perfect) effective realization of $D\oplus(\Omega\backslash\overline{D}; \mathcal{C}, \rho)$ should not exist in generic scenarios. However, we shall show in this paper that there always exist effective $\varepsilon$-realizations of $D\oplus(\Omega\backslash\overline{D}; \mathcal{C}, \rho)$ for any given $\varepsilon\ll 1$. If so, the partial-data inverse problem of recovering $(\Omega\backslash\overline{D}; \mathcal{C}, \rho)$ by knowledge of $ \Lambda^{p}_{\Omega\backslash\overline{D}; \mathcal{C}, \rho}$ can be (at least approximately) reduced to
the full-data inverse problem of recovering $(\Omega; \widetilde{\mathcal{C}}, \widetilde{\rho})$ by knowledge of $\Lambda^f_{\Omega; \widetilde{\mathcal{C}}, \widetilde{\rho} }$, whereby one has rich results for the unique identifiability and reconstruction methods; see the references cited earlier as well as the references therein. We shall present more discussions in what follows on the interesting implications of our study to the inverse elastic problem.

So far, we have mainly considered the inverse boundary problem of making use of the traction field $\mathcal{T}_{\bnu}(\mathbf{u})$ as the boundary input and the displacement field $\mathbf{u}$ on the boundary as the measured output. An alternative way is to make use of the displacement field as the boundary input and the boundary traction field as the output. By following a similar discussion, one can show that the homogeneous condition $\mathcal{T}_{\bnu}(\mathbf{u})|_{\partial D}={\bf0}$ should be replaced by $\mathbf{u}|_{D}={\bf0}$. In such a case, $D$ is referred to as a rigid impenetrable obstacle in the literature. Clearly, Definition~\ref{def:1} also applies for the (perfect or approximate) effective realization of an embedded rigid obstacle.

Another inverse problem of close interest is the simultaneous recovery of buried obstacles and surrounding mediums in the elastic scattering theory. Let $D\oplus(\Omega\backslash\overline{D}; \mathcal{C}, \rho)$ be described as earlier, where $D$ can either a traction-free or a rigid obstacle. Let $\lambda_e, \mu_e$ and $\rho_e$ be real constants satisfying the strong convexity condition (induced by the ellipticity condition \eqref{eq:ellip1}):
\begin{equation}\label{eq:convexity1}
\mu_e>0, \quad n\lambda_e+2\mu_e>0\quad \mbox{and}\quad \rho_e>0.
\end{equation}
Let $\mathcal{C}^e$ be an isotropic elastic tensor as defined in \eqref{eq:lame2} with $\lambda=\lambda_e$ and $\mu=\mu_e$. Let $(\mathcal{C}, \rho)$ be extended into $\mathbb{R}^n\backslash\overline{\Omega}$ such that $(\mathcal{C}, \rho)=(\mathcal{C}^e, \rho_e)$ in $\mathbb{R}^n\backslash\overline{\Omega}$. Let $\mathbf{u}^{in}$ be an entire solution to the following Lam\'e system:
\begin{equation}\label{eq:incident1}
\mu_e\Delta\mathbf{u}^{in}+(\lambda_e+\mu_e)\nabla(\nabla\cdot\mathbf{u}^{in})+\omega^2\rho_e\mathbf{u}^{in}={\bf0}.
\end{equation}
Consider the following elastic scattering system:
\begin{equation}\label{eq:scattering1}
\begin{cases}
\mathcal{L}_{\mathcal{C}}\mathbf{u}+\omega^2\rho\mathbf{u}=\bff& \mbox{in}\ \ \mathbb{R}^n\backslash\overline{D},\medskip\\
\mathbf{u}=\mathbf{u}^{in}+\mathbf{u}^s & \mbox{in}\ \ \mathbb{R}^n\backslash\overline{\Omega},\medskip\\
\mathcal{B}(\mathbf{u})={\bf0} & \mbox{on}\ \ \partial D,\medskip\\
\bu\big|_{\partial \Omega}=\bu^{s}\big|_{\partial \Omega}+\bu^{in},\quad \ct_{\bnu}(\bu)=\ct_{\bnu}(\bu^{s})+\ct_{\bnu}(\bu^{in})&\mbox{on}\ \  \partial{\Omega},\\
\bu^{\mathrm p,\,s}=-\frac{1}{k_{\mathrm p}^2} \nabla (\nabla \cdot \bu^{s} ),\quad \bu^{\mathrm s,\,s}= \frac{1}{k_{\mathrm s} ^2}  \nabla \times  ( \nabla \times   \bu^{s} ) & \mbox{in}\ \ \mathbb{R}^n\backslash\overline{\Omega}, \medskip\\
{\lim_{|\bx| \to\infty}|\bx|^{(n-1)/2}\big(  \frac{\partial{\bu^{\mathrm { t},s}}}{\partial{|\bx|}}   -\imath\kappa_{\mathrm  t}\bu^{\mathrm  t,s}  \big)=\bf 0,} & {\mathrm  t=\mathrm  p,\mathrm s,} \
\end{cases}
\end{equation}
where  $\bff(\bx)$ indicates a source and is compactly supported outside $\Omega$, namely $\supp(\bff)\subset B_{r_0}\backslash \overline{\Omega}$ for some ball $B_{r_0}$ with center at the origin and a radius of $r_0$. $\imath:=\sqrt{-1}$, $\kappa_{\mathrm s}:=\omega\sqrt{1/\mu_e}$ and $\kappa_{\mathrm p}:=\omega\sqrt{1/(\lambda_e+2\mu_e)}$, and $\mathcal{B}(\mathbf{u})=\mathbf{u}$ or $\mathcal{B}(\mathbf{u})=\mathcal{T}_{\bnu}(\mathbf{u})$ correspond, respectively, to the cases that $D$ is rigid or traction-free. The system \eqref{eq:scattering1} describes the time-harmonic scattering due to an incident field $\mathbf{u}^{in}$ and the scatter $D\oplus(\Omega\backslash\overline{D}; \mathcal{C}, \rho)$. $\mathbf{u}^s$ is referred to as the scattered field, which characterizes the perturbation of the propagation of the incident field due to the presence of the inhomogeneous scatterer. $\mathbf{u}^{\mathrm{p}, s}$ and $\mathbf{u}^{\mathrm{s}, s}$ are the compressional and shear parts of $\mathbf{u}^s$, respectively. The last limit in \eqref{eq:scattering1} is known as the Kupradze radiation condition, which holds uniformly in the angular variable $\hat{\mathbf{x}}:=\mathbf{x}/|\mathbf{x}|\in\mathbb{S}^{n-1}$. We couldn't find a convenient reference for the well-posedness of the scattering problem \eqref{eq:scattering1} in such a general scenario and shall provide a proof in Subsection \ref{well-posedness}. The solution $\mathbf{u}^s\in H_{loc}^1(\mathbb{R}^n\backslash\overline{D})^n$ admits the following asymptotic expansion (cf. \cite{Haher1998}):
\begin{align}\label{ineq:farfiel}
\bu^s(\bx)=\dfrac{\exp(\imath\kappa_{\mathrm p} |\bx|)}{|\bx|^{(n-1)/2}}\bu^{\mathrm  p,\infty}(\hat {\bx})+\dfrac{\exp(\imath\kappa_{\mathrm s}|\bx|)}{|\bx|^{(n-1)/2}}\bu^{\mathrm s,\infty}(\hat {\bx})+\mathcal{O}(|\bx|^{-\frac{n+1}{2}})\quad \mbox{as}\quad |\bx|\to\infty,
\end{align}
uniformly in all directions $\hat{\bx}\in \mathbb{S}^{n-1}$. $\bu^{\mathrm p,\infty}$ and $\bu^{\mathrm s,\infty}$ are defined on the unit sphere $\mathbb{S}^{n-1}$, and are known as the longitudinal and transversal far field patterns corresponding to $\bu^{\mathrm p, s}$ and $\bu^{\mathrm s,s}$, respectively. The far-field pattern $\bu^\infty$ of the scattered field $\bu^s$ is defined as the sum of $\bu^{\mathrm p,\infty}$ and $\bu^{\mathrm s,\infty}$, i.e.,
$$
\bu^\infty:=\bu^{\mathrm p,\infty}+\bu^{\mathrm s,\infty}.
$$
It is known that $\bu^{\mathrm p,\infty}$ is normal to $\mathbb{S}^{n-1}$ and $\bu^{\mathrm s,\infty}$
is tangential to $\mathbb{S}^{n-1}$. Thus we have $\bu^{\mathrm p,\infty}=(\bu^{\infty}\cdot\hat{\bx})\,\hat{\bx}$ and $\bu^{\mathrm s,\infty}=\hat{\bx}\times \bu^{\infty}\times\hat{\bx}$.

An inverse scattering problem arsing in practical applications including seismology and elastography is to recover the inhomogeneous scatterer $D\oplus(\Omega\backslash\overline{D}; \mathcal{C}, \rho)$ by knowledge of the associated far-field pattern $\mathbf{u}^\infty$, namely,
\begin{equation}\label{eq:ip2}
\mathbf{u}^\infty(\hat{\mathbf{x}}; \mathbf{u}^{in}, \mathbf{f}, D\oplus(\Omega\backslash\overline{D}; \mathcal{C}, \rho) )\rightarrow D\oplus(\Omega\backslash\overline{D}; \mathcal{C}, \rho).
\end{equation}
Here, it is noted that there are two different kinds of sources, $f$ and $\mathbf{u}^{in}$, in \eqref{eq:incident1}, which correspond to the so-called passive and active measurements in the context of the inverse problem \eqref{eq:ip2}. In order to give a general study, we include both of them into our study, and either one of them can be taken to be zero, corresponding to different scenarios in the context of the inverse problem \eqref{eq:ip2} in the literature. In \eqref{eq:ip2}, the presence of the impenetrable obstacle $D$ make the study of the inverse problem radically more challenging compared to the case without the obstacle, i.e. $D=\emptyset$. In fact, to our best knowledge, there is no result available in the literature for the inverse problem \eqref{eq:ip2} in the case when $D\neq \emptyset$, whereas there are rich results in the case $D=\emptyset$; see e.g. \cite{Haher1998,Hahenr1993} and the references cited therein. Nevertheless, we would like to mention some related studies for the inverse acoustic and electromagnetic scattering problems in simultaneously recovering a buried obstacle and its surrounding medium \cite{DLL,LL17,Liu2012} where one needs to make use of multiple-frequency measurements, namely severely over-determined data were used. Similar to the treatment for the inverse boundary problem \eqref{eq:ip1} with partial measurements, we intend reduce the inverse scattering problem \eqref{eq:ip2} to a simpler case with no buried obstacles in an effective way. To that end, we introduce the following definition.
\begin{defn}\label{def:2}
Consider $D\oplus(\Omega\backslash\overline{D}; \mathcal{C}, \rho)$ as described above. If there exist an elastic medium $(\Omega; \widetilde{\mathcal{C}}, \widetilde{\rho})$ with $(\widetilde{\mathcal{C}}, \widetilde{\rho})\big|_{\Omega\backslash\overline{D}}=(\mathcal{C}, \rho)\big|_{\Omega\backslash\overline{D}}$, and $\varepsilon\in\mathbb{R}_+$ with $\varepsilon\ll 1$ such that
\begin{equation}\label{eq:effective1}
\begin{split}
&\big\|\mathbf{u}^\infty(\hat{\mathbf{x}}; \mathbf{u}^{in}, \mathbf{f}, (\Omega; \widetilde{\mathcal{C}}, \widetilde{\rho}) )-\mathbf{u}^\infty(\hat{\mathbf{x}}; \mathbf{u}^{in}, \mathbf{f}, D\oplus(\Omega\backslash\overline{D}; \mathcal{C}, \rho) ) \big\|_{C(\mathbb{S}^{n-1})^n} \\
 &\leq C \varepsilon\left(\|\mathbf{u}^{in}\|_{H^1(B_{r_0})^n}+\|\bff\|_{L^2(B_{r_0})^n}\right),
\end{split}
\end{equation}
where $B_{r_0}$ is any given central ball containing $\Omega$, and $C$ is a generic positive constant depending on the a-priori paramters, then $(\Omega; \widetilde{\mathcal{C}}, \widetilde{\rho})$ is said to be an effective $\varepsilon$-realization of $D\oplus(\Omega\backslash\overline{D}; \mathcal{C}, \rho)$. If $\varepsilon\equiv 0$, then $(\Omega; \widetilde{\mathcal{C}}, \widetilde{\rho})$ is said to be an effective realization of $D\oplus(\Omega\backslash\overline{D}; \mathcal{C}, \rho)$. For simpler terminologies, we also call $(D; \widetilde{\mathcal{C}}, \widetilde{\rho})$ an effective realization of the obstacle $D$.

\end{defn}

Hence, if one can find an effective realization of the embedded obstacle $D$, the inverse problem \eqref{eq:ip2} can then be effectively reduced to the recovery of $(\Omega; \widetilde{\mathcal{C}}, \widetilde{\rho})$, which possesses a much simpler topological structure.

\subsection{Summary of the main results}\label{subsect:summary of results}

Motivated by the studies of the inverse problems discussed earlier, we establish in this paper that there are always approximate effective realizations of the embedded obstacles. It is clear that the two problems \eqref{eq:lame4} and \eqref{eq:scattering1} are closely related. Indeed, they are equivalent if an appropriate truncation is introduced for truncating the unbounded domain $\mathbb{R}^n\backslash\overline{D}$ in \eqref{eq:scattering1} into a bounded one. In the rest of our paper, we shall present our study mainly for the scattering system \eqref{eq:scattering1}. On the one hand, the scattering model \eqref{eq:scattering1} is physically more relevant in the context of the inverse elastic problem study, and on the other hand, the corresponding mathematical argument for the effective medium theory associated with \eqref{eq:scattering1} is technically more involved that associated with \eqref{eq:lame4}. Nevertheless, it is emphasized that the results established in our
study hold equally for the corresponding problem associated with \eqref{eq:lame4}.

Our main result can be summarized in the following theorem.

\begin{thm}\label{thm:main1}
Consider the scattering problem \eqref{eq:scattering1} associated with the scatterer  $D\oplus(\Omega\backslash\overline{D}; \mathcal{C}, \rho)$. Let $\lambda_0, \mu_0$ be real constants satisfying the strong convexity condition in \eqref{eq:convexity1}, and $\eta_0, \tau_0$ be positive constants. Let $\varepsilon\in\mathbb{R}_+$ and $\varepsilon\ll 1$.
\begin{itemize}
	\item[{\rm (I)}] Case 1: If $D$ is a traction-free obstacle, then $(D; \mathcal{C}^0, \rho_0)$ with $\mathcal{C}^0$ given in the form \eqref{eq:lame2}:
\begin{equation}\label{eq:eff2}
\lambda=\varepsilon\lambda_0, \quad \mu=\varepsilon\mu_0,\quad\rho_0=\eta_0+\imath\tau_0,
\end{equation}
is an $\varepsilon^{1/2}$-realization of $D$ in the sense of Definition~\ref{def:2};
\item[{\rm (II)}]  Case 2: If $D$ is a rigid obstacle, then $(D; \mathcal{C}^0, \rho_0)$ with $\mathcal{C}^0$ given in the form \eqref{eq:lame2}:
\begin{equation}\label{eq:eff2_2}
\lambda=\varepsilon^{-2}\lambda_0, \,\, \mu=\varepsilon^{-2}\mu_0,\,\,\rho_0=(\eta_0+\imath\varepsilon^{-1}\tau_0),
\end{equation}
is an $\varepsilon^{1/2}$-realization of $D$ in the sense of Definition~\ref{def:2}.
\end{itemize}

\end{thm}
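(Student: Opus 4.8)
The plan is to reduce the far-field estimate to an interior estimate on the scattered field by exploiting the mapping properties of the scattering problem, and then to quantify how the effective medium $(D;\mathcal{C}^0,\rho_0)$ fails to satisfy the obstacle boundary condition on $\partial D$. Concretely, denote by $\mathbf{u}$ the solution of \eqref{eq:scattering1} associated with the true scatterer $D\oplus(\Omega\backslash\overline D;\mathcal{C},\rho)$, and by $\mathbf{u}_\varepsilon$ the solution of the analogous transmission problem with the obstacle $D$ replaced by the penetrable inclusion $(D;\mathcal{C}^0,\rho_0)$, with $(\mathcal{C}^0,\rho_0)$ chosen as in \eqref{eq:eff2} or \eqref{eq:eff2_2}. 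Since the two problems share the same equation, source $\mathbf{f}$, and incident field $\mathbf{u}^{in}$ in $\mathbb{R}^n\backslash\overline{\Omega}$ and in $\Omega\backslash\overline D$, the difference $\mathbf{w}_\varepsilon:=\mathbf{u}_\varepsilon-\mathbf{u}$ satisfies a homogeneous radiating Lamé system in $\mathbb{R}^n\backslash\overline D$ (resp.\ $\mathbb{R}^n\backslash\overline{\Omega}$) except that its Cauchy data on $\partial D$ is driven entirely by the behaviour of $\mathbf{u}_\varepsilon$ inside $D$. By the well-posedness and continuity of the solution operator (Subsection~\ref{well-posedness}) and the standard far-field estimate, it suffices to bound $\|\mathbf{w}_\varepsilon\|_{H^1}$ near $\partial D$, and for that it suffices to control either $\mathbf{u}_\varepsilon|_{\partial D}$ (rigid case) or $\mathcal{T}_{\bnu}(\mathbf{u}_\varepsilon)|_{\partial D}$ (traction-free case) in the appropriate trace norm.

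The heart of the matter is therefore the asymptotic analysis of $\mathbf{u}_\varepsilon$ restricted to the inclusion $D$. First I would set up the variational/transmission formulation for $\mathbf{u}_\varepsilon$ on a large ball $B_R\supset\overline{\Omega}$ with a DtN (or NtD) operator on $\partial B_R$ encoding the radiation condition, so that the problem becomes coercive-plus-compact. For \textbf{Case 1} (traction-free, $\lambda=\varepsilon\lambda_0$, $\mu=\varepsilon\mu_0$), the stiffness inside $D$ degenerates: testing the weak form against $\mathbf{u}_\varepsilon$ and using the Legendre ellipticity \eqref{eq:ellip1} together with $\Im\rho_0=\tau_0>0$ yields an a priori bound showing $\varepsilon\|\nabla\mathbf{u}_\varepsilon\|_{L^2(D)}^2$ and $\|\mathbf{u}_\varepsilon\|_{L^2(D)}^2$ are uniformly bounded, hence $\mathcal{C}^0:\nabla\mathbf{u}_\varepsilon=\mathcal{O}(\varepsilon^{1/2})$ in $L^2(D)$; combined with the Lamé equation $\nabla\cdot(\mathcal{C}^0:\nabla\mathbf{u}_\varepsilon)=-\omega^2\rho_0\mathbf{u}_\varepsilon$ in $D$ and a trace estimate one gets $\|\mathcal{T}_{\bnu}(\mathbf{u}_\varepsilon)\|_{H^{-1/2}(\partial D)}=\mathcal{O}(\varepsilon^{1/2})$, which is exactly the traction-free defect. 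For \textbf{Case 2} (rigid, $\lambda=\varepsilon^{-2}\lambda_0$, $\mu=\varepsilon^{-2}\mu_0$, $\Im\rho_0=\varepsilon^{-1}\tau_0$), the stiffness blows up, forcing $\mathbf{u}_\varepsilon$ to be nearly a rigid motion in $D$; the large damping term $\varepsilon^{-1}\tau_0$ then additionally suppresses that rigid motion, and the energy identity gives $\varepsilon^{-2}\|\nabla\mathbf{u}_\varepsilon\|_{L^2(D)}^2+\varepsilon^{-1}\|\mathbf{u}_\varepsilon\|_{L^2(D)}^2=\mathcal{O}(1)$, whence $\|\mathbf{u}_\varepsilon\|_{H^1(D)}=\mathcal{O}(\varepsilon^{1/2})$ and in particular $\|\mathbf{u}_\varepsilon\|_{H^{1/2}(\partial D)}=\mathcal{O}(\varepsilon^{1/2})$, the rigid defect. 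In both cases the uniform bounds require care: one must show the full solution operator (for the $\varepsilon$-problem, on all of $B_R$) stays uniformly bounded as $\varepsilon\to 0$, which follows by a standard Fredholm/uniqueness argument using $\Im\rho_0>0$ and the radiation condition to rule out a degenerating sequence of homogeneous solutions.

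With the boundary defect $\delta_\varepsilon:=\|\mathcal{T}_{\bnu}(\mathbf{u}_\varepsilon)\|_{H^{-1/2}(\partial D)}$ (Case 1) or $\delta_\varepsilon:=\|\mathbf{u}_\varepsilon\|_{H^{1/2}(\partial D)}$ (Case 2) shown to be $\mathcal{O}(\varepsilon^{1/2})$, I would then feed this into the stability estimate for the exterior scattering problem: $\mathbf{w}_\varepsilon$ solves the exact scattering system \eqref{eq:scattering1} with the \emph{true} obstacle $D$ but with the boundary datum $\mathcal{B}(\mathbf{w}_\varepsilon)|_{\partial D}$ equal (up to sign) to $\mathcal{B}(\mathbf{u}_\varepsilon)|_{\partial D}$, zero source and zero incident field; well-posedness then gives $\|\mathbf{w}_\varepsilon\|_{H^1(B_R\backslash\overline D)}\le C\delta_\varepsilon$, and the far-field map being bounded from this space into $C(\mathbb{S}^{n-1})^n$ (with the source-dependence packaged into the constant via the a priori bound $\|\mathbf{u}\|_{H^1}\lesssim \|\mathbf{u}^{in}\|_{H^1(B_{r_0})}+\|\mathbf{f}\|_{L^2(B_{r_0})}$) yields \eqref{eq:effective1} with $\varepsilon$ replaced by $\varepsilon^{1/2}$.

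The main obstacle I anticipate is establishing the \emph{uniform-in-$\varepsilon$} well-posedness and solution bound for the penetrable $\varepsilon$-problem in the two singular regimes — degenerate stiffness in Case 1 and simultaneously singular stiffness and damping in Case 2 — since the natural energy norm itself degenerates and one cannot directly invoke a fixed coercivity constant; the resolution is to work with the $\varepsilon$-weighted energy norm on $D$ glued to the standard $H^1$ norm outside, track how the DtN/coupling terms interact with the weights, and run a contradiction-compactness argument in which a hypothetical blow-up sequence $\mathbf{u}_{\varepsilon_j}/\|\mathbf{u}_{\varepsilon_j}\|$ is shown, using the interior decay just described, to converge to a nontrivial radiating solution of the \emph{obstacle} problem with zero data, contradicting uniqueness. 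A secondary technical point is justifying the $\mathcal{O}(\varepsilon^{1/2})$ (rather than $\mathcal{O}(\varepsilon)$) rate as sharp and consistent with the square-root loss inherent in passing from an $L^2$-energy defect to an $H^{\pm 1/2}$ trace bound, which is exactly why the theorem claims an $\varepsilon^{1/2}$-realization.
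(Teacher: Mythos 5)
Your proposal follows essentially the same route as the paper's proof: truncate to a ball with the DtN operator encoding the radiation condition; test the transmission problem for the effective-medium solution $\widetilde{\bu}$ against itself, use Betti's first formula over $D$, $\Omega\backslash\overline D$, and $B_r\backslash\overline\Omega$, and take real and imaginary parts to obtain exactly the $\varepsilon$-weighted energy bounds $\varepsilon\|\nabla\widetilde{\bu}\|_{L^2(D)}^2+\|\widetilde{\bu}\|_{L^2(D)}^2=\mathcal O(1)$ (Case 1) and their Case 2 analogues; establish the uniform-in-$\varepsilon$ bound on $\|\widetilde{\bu}\|_{H^1(B_r\backslash\overline D)}$ by a contradiction argument; convert the interior energy decay into an $\mathcal O(\varepsilon^{1/2})$ boundary defect on $\partial D$ (the traction in Case 1, the trace in Case 2); feed that defect as data into the well-posed fixed-obstacle transmission problem to bound $\widetilde{\bu}-\bu$ in $H^1(B_r\backslash\overline D)$; and finally pass to the far field via the representation formulas. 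This is precisely the structure of Lemmas 3.1/4.1, Propositions 3.2/3.3/4.1, and the concluding far-field argument in the paper. The only stylistic difference is in how the uniform bound is closed: you propose a weak-compactness/limiting argument producing a nontrivial radiating solution with zero data, whereas the paper runs a normalization contradiction that directly invokes the a-priori estimate of the fixed-obstacle transmission problem (Lemmas 2.3/2.4) applied to the rescaled sequence, without extracting a limit; both are standard and equivalent here.
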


\begin{rem}\label{rem:1}
In \eqref{eq:eff2} and \eqref{eq:eff2_2}, we assume that $\lambda_0, \mu_0$ and $\eta_0, \tau_0$ are all constants. Indeed, they can be replaced to be variable functions satisfying the strong convexity condition and this can easily seen from our subsequent argument in proving Theorem~\ref{thm:main1}. However, we stick to the simpler case with constants in order to ease the exposition. The main point is that if $D$ is a traction-free obstacle, as long as the effective medium is lossy with asymptotically small bulk moduli, one can have the approximate effective realization effect. A similar remark can be made for the case if $D$ is a rigid obstacle.
\end{rem}

\subsection{Discussion}

We present more discussion on the implications of Theorem~\ref{thm:main1} to the inverse problem \eqref{eq:ip1p} or \eqref{eq:ip2}. As remarked earlier, we focus our discussion on \eqref{eq:ip2}. A standard approach for solving the inverse problem \eqref{eq:ip2} is the following optimization formulation:
\begin{equation}\label{eq:opt1}
\min_{\hat{D}\oplus(\Omega\backslash\overline{\hat{D}}; \hat{\mathcal{C}}, \hat{\rho})\in\mathscr{C}}\big\| \mathbf{u}^\infty(\hat{\mathbf{x}}; \mathbf{u}^{in}, \mathbf{f}, \hat{D}\oplus(\Omega\backslash\overline{\hat{D}}; \hat{\mathcal{C}}, \hat{\rho}) )-\mathcal{M}(D\oplus(\Omega\backslash\overline{D}; \mathcal{C}, \rho)) \big\|_{C(\mathbb{S}^{n-1})^n},
\end{equation}
where $\mathscr{C}$ and $\mathcal{M}$ signify the a-priori class of admissible scatterers and the measured far-field data, respectively. Clearly, $D\oplus(\Omega\backslash\overline{D}; \mathcal{C}, \rho)$ is a global minimizer to \eqref{eq:opt1}. By Theorem~\ref{thm:main1}, we replace \eqref{eq:opt1} by the following optimization problem for the reconstruction:
\begin{equation}\label{eq:opt2}
\min_{(\Omega; \hat{\mathcal{C}}, \hat{\rho})\in\mathscr{C}}\big\| \mathbf{u}^\infty(\hat{\mathbf{x}}; \mathbf{u}^{in}, \mathbf{f}, (\Omega; \hat{\mathcal{C}}, \hat{\rho}) )-\mathcal{M}(D\oplus(\Omega\backslash\overline{D}; \mathcal{C}, \rho)) \big\|_{C(\mathbb{S}^{n-1})^n}.
\end{equation}
By Definition~\ref{def:2} and Theorem~\ref{thm:main1}, $(\Omega; \widetilde{C}, \widetilde{\rho})$ is an asymptotically global minimizer to \eqref{eq:opt2}. Three remarks are in order. First, it is expected that in generic scenarios the reconstruction result from \eqref{eq:opt2} can (approximately) locate the topological defect of the underlying scatterer, namely the buried obstacle. In fact,  considering the case that $D$ is a traction-free obstacle, one can see that for the reconstructed medium, it should possess asymptotically small bulk moduli in the region where the obstacle is located. Second, for illustration, we only considered a simpler case with a single ``hole" above. It is clear that the same idea works for the case that there are multiple ``holes" within the scatterer. That is, one can start the reconstruction with the optimization formulation \eqref{eq:opt2} without any requirement of the a-priori knowledge of the topological structure of the underlying scatterer. Using the reconstruction result, one should be able to (approximately) profile the topological structure of the scatterer, namely to (approximately) identify the buried obstacles, by locating the regions where the reconstructed medium show a certain asymptotically peculiar behaviour. One can then use such a reconstruction result as an initial guess for the optimization formulation \eqref{eq:opt1} to further refine the reconstruction. Third, it is clear that the above described reconstruction procedure is rather heuristic. One would need to establish the uniqueness and stability results in order to guarantee the qualitative and quantitative properties of the minimizers to \eqref{eq:opt1} and \eqref{eq:opt2} required in the reconstruction procedure described above. The ill-posedness of the inverse problem shall add extra complexities to the desired theoretical justification. Hence, in this paper, in order to have a focusing theme of our study, we mainly consider the effective realization of embedded obstacles and postpone the more comprehensive inverse problem study in a forthcoming paper.

The rest of the paper is organized as follows. In Section \ref{sect:preliminary}, we mainly recall some preliminary results and give one important auxiliary lemma and give the proof of the well-poseness of \eqref{eq:scattering1}. The proof of Theorem \ref{thm:main1} for Case 1 and Case 2 will be provided in Section \ref{traction-free obstacle} and \ref{rigid obstacle}, respectively.

\section{Auxiliary results}\label{sect:preliminary}
\subsection{Preliminary}\label{subsect:preliminary}
In this subsection, we present some preliminary results for our subsequent use. We first recall the following lemma on the conormal derivative of the vector field  in the linear elasticity, which is a special case of Lemma 4.3 in \cite{Mclean2000}.

\begin{lem}\label{eq:cornormal} 
Let $\Omega\subset \mathbb{R}^n$ be a domain with a Lipschitz boundary.
Suppose  $\bu\in H^1(\Omega)^n$ and $\bh\in H^{-1}(\Omega)^n$ satisfying
\begin{align}\label{th:p0}
\mathcal{L}_{\mathcal{C}}\bu =\bh \quad\mbox{in}\quad \Omega,
\end{align}
where $\Ccal(\bx)$ is an elastic tensor satisfying the uniform Legendre ellipticity condition \eqref{eq:ellip1}.
Then there exists $\bg\in H^{-1/2}(\partial \Omega)^n$ such that
\begin{align}\label{def:innerproduct}
\Psi(\bu,\bv)=-(\bh,\bv)_{\Omega}+(\bg,\gamma\bv)_{\partial \Omega} \quad \forall \, \bv(\bx)\in H^{1}(\Omega)^n
\end{align}
with
\begin{align*}
\Psi(\bu,\bv)=\int_{\Omega}[\Ccal(\bx):\overline{\nabla\bu}]:\nabla\bv \rmd \bx,\,\,\,
(\bh,\bv)_{\Omega}=\int_{\Omega}\bh(\bx)\cdot \bv dx,\,\,\,
(\bg,\gamma\bv)_{\partial \Omega}=\int_{\partial \Omega}\bg\cdot\gamma\bv\rmd s(\bx),
\end{align*}
where $\gamma$ is the trace operator from $H^1(\Omega)^n$ to $H^{1/2}(\Omega)^n$.

Furthermore, $\bg$ is uniquely determined by $\bu$ and $\bh$ in the sense that  the following estimate holds for some constant $\eta>0$:
\begin{align}\label{ineq:estimate_conormal}
\|\bg\|_{H^{-1/2}(\partial \Omega)^n}\leq \eta\big(   \|\bu\|_{H^1(\Omega)^n}+\|\bh\|_{H^{-1}(\Omega)^n}        \big).
\end{align}
\end{lem}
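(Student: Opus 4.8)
The plan is to obtain $\mathbf{g}$ as the Riesz representative — in the duality $\bigl(H^{1/2}(\partial\Omega)^n\bigr)'=H^{-1/2}(\partial\Omega)^n$ — of a bounded linear functional on the trace space, using that the equation \eqref{th:p0} forces the relevant form to vanish on fields with zero trace. Fix $\mathbf{u},\mathbf{h}$ and set $F(\mathbf{v}):=\Psi(\mathbf{u},\mathbf{v})+(\mathbf{h},\mathbf{v})_{\Omega}$ for $\mathbf{v}\in H^{1}(\Omega)^n$. First I would check that $F$ is a bounded linear functional on $H^{1}(\Omega)^n$: using the symmetry \eqref{eq:symm1} one may symmetrize $\nabla\mathbf{u}$ and $\nabla\mathbf{v}$ inside $\Psi$, so the upper bound in \eqref{eq:ellip1} yields $|\Psi(\mathbf{u},\mathbf{v})|\le c_{\text{max}}\|\nabla\mathbf{u}\|_{L^2(\Omega)}\|\nabla\mathbf{v}\|_{L^2(\Omega)}$, while $|(\mathbf{h},\mathbf{v})_{\Omega}|\le\|\mathbf{h}\|_{H^{-1}(\Omega)^n}\|\mathbf{v}\|_{H^{1}(\Omega)^n}$ by definition of the $H^{-1}$-norm; hence $|F(\mathbf{v})|\le C\bigl(\|\mathbf{u}\|_{H^1(\Omega)^n}+\|\mathbf{h}\|_{H^{-1}(\Omega)^n}\bigr)\|\mathbf{v}\|_{H^1(\Omega)^n}$.

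The crux is that $F$ annihilates $H_{0}^{1}(\Omega)^{n}$. Indeed, testing \eqref{th:p0} against $\mathbf{w}\in C_{0}^{\infty}(\Omega)^{n}$ and integrating by parts gives $\Psi(\mathbf{u},\mathbf{w})=-(\mathbf{h},\mathbf{w})_{\Omega}$ — this is precisely the weak form of $\nabla\cdot(\mathcal{C}:\nabla\mathbf{u})=\mathbf{h}$ once the conjugation conventions in $\Psi$ and in the pairing $(\cdot,\cdot)_{\Omega}$ are matched — so $F(\mathbf{w})=0$; by density of $C_{0}^{\infty}(\Omega)^{n}$ in $H_{0}^{1}(\Omega)^{n}=\ker\gamma$ and continuity of $F$, this persists for all $\mathbf{w}\in H_{0}^{1}(\Omega)^{n}$. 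Consequently $F$ factors through the quotient $H^{1}(\Omega)^{n}/H_{0}^{1}(\Omega)^{n}$. Since $\partial\Omega$ is Lipschitz, $\gamma:H^{1}(\Omega)^{n}\to H^{1/2}(\partial\Omega)^{n}$ is bounded, surjective, and admits a bounded linear right inverse (extension operator) $E$, so the induced map $H^{1}(\Omega)^{n}/H_{0}^{1}(\Omega)^{n}\to H^{1/2}(\partial\Omega)^{n}$ is a Banach-space isomorphism; transporting $F$ through it produces a bounded functional on $H^{1/2}(\partial\Omega)^{n}$, represented by a unique $\mathbf{g}\in H^{-1/2}(\partial\Omega)^{n}$. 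Unwinding the identifications gives exactly $F(\mathbf{v})=(\mathbf{g},\gamma\mathbf{v})_{\partial\Omega}$ for all $\mathbf{v}\in H^{1}(\Omega)^{n}$, which is \eqref{def:innerproduct}.

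For \eqref{ineq:estimate_conormal} I would test against extensions: given $\bphi\in H^{1/2}(\partial\Omega)^{n}$ take $\mathbf{v}=E\bphi$, so $\|\mathbf{v}\|_{H^{1}(\Omega)^{n}}\le C_{E}\|\bphi\|_{H^{1/2}(\partial\Omega)^{n}}$ and $|(\mathbf{g},\bphi)_{\partial\Omega}|=|F(E\bphi)|\le CC_{E}\bigl(\|\mathbf{u}\|_{H^1(\Omega)^n}+\|\mathbf{h}\|_{H^{-1}(\Omega)^n}\bigr)\|\bphi\|_{H^{1/2}(\partial\Omega)^{n}}$; taking the supremum over the unit ball of $H^{1/2}(\partial\Omega)^{n}$ gives the estimate with $\eta=CC_{E}$. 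Uniqueness is immediate: two admissible $\mathbf{g}$ differ by an element of $H^{-1/2}(\partial\Omega)^{n}$ pairing to zero against all of $\mathrm{ran}\,\gamma=H^{1/2}(\partial\Omega)^{n}$, hence vanishing. The only genuinely nonroutine ingredients are the surjectivity of $\gamma$ with a bounded right inverse and the density of $C_{0}^{\infty}(\Omega)^{n}$ in $H_{0}^{1}(\Omega)^{n}$ — these are exactly the Lipschitz-domain facts underpinning Lemma 4.3 in \cite{Mclean2000}, and in the write-up one may simply invoke that result rather than reprove them.
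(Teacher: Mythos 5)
Your argument is correct and is essentially the proof of Lemma~4.3 in McLean's book, which the paper merely cites without reproducing; the key steps --- boundedness of $F(\mathbf{v})=\Psi(\mathbf{u},\mathbf{v})+(\mathbf{h},\mathbf{v})_{\Omega}$, its vanishing on $H^1_0(\Omega)^n=\ker\gamma$ via the weak form of \eqref{th:p0}, factoring through the trace quotient using surjectivity of $\gamma$ with a bounded right inverse, and reading off the estimate by testing with extensions --- are exactly the standard ones. The one caveat, which you already flag yourself, is a conjugation mismatch built into the paper's statement (the $\overline{\nabla\mathbf{u}}$ in $\Psi$ versus the unconjugated $(\mathbf{h},\mathbf{v})_{\Omega}$), so your derivation of $F\big|_{H^1_0}=0$ requires testing the conjugated equation rather than \eqref{th:p0} literally; this is a defect of the paper's formulation, not of your proof.
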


In general, we write $\bg=\bnu\cdot[\Ccal(\bx):\nabla\bu]$ in the distribution sense, which is called the conormal derivative of $\bu$.
\begin{cor}\label{cor:conormal}
Let $\Omega\subset\mathbb{R}^n$ be a bounded connected domain with a Lipschitz boundary. Suppose $\bu$, $\bv$ $\in H^1(\Omega)^n$ and $\bh=-\omega^2\rho\bu\in L^2(\Omega)^n\Subset H^{-1}(\Omega)^n$. Then we have
\begin{align}\label{eq:estimate_conormal2}
\Psi(\bu,\bv)=(\omega^2\rho\bu,\bv)_{\Omega}+\big(\ct_{\bnu}(\bu),\,\gamma\bv\big)_{\partial \Omega}
\end{align}
and
\begin{align}\label{ineq:estimate_conormal2}
\|\ct_{\bnu}(\bu)\|_{H^{-1/2}(\partial \Omega)^n}\leq \,\eta\,  \|\bu\|_{H^1(\Omega)^n},
\end{align}
where $\eta$ is a positive constant.
\end{cor}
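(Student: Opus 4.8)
The plan is to read this corollary off Lemma~\ref{eq:cornormal} by specializing the source term, the only additional input being the continuous (indeed, by Rellich, compact) embedding $L^2(\Omega)^n\Subset H^{-1}(\Omega)^n$, which is available because $\Omega$ is bounded with Lipschitz boundary. Throughout I take the implicit standing hypothesis that $\bu$ is a weak solution of the homogeneous Lam\'e system, i.e. $\mathcal{L}_{\mathcal{C}}\bu=\bh$ in $\Omega$ with $\bh:=-\omega^2\rho\bu$, so that $\ct_{\bnu}(\bu)$ is exactly the conormal derivative $\bnu\cdot[\Ccal(\bx):\nabla\bu]$ attached to this $\bh$ in the sense explained right after Lemma~\ref{eq:cornormal}.

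First I would verify that $\bh$ qualifies as an admissible right-hand side. Since $\rho\in L^\infty(\Omega)$ with $\Re\rho>0$ and $\Im\rho\ge 0$, and $\bu\in H^1(\Omega)^n\hookrightarrow L^2(\Omega)^n$, one has $\|\bh\|_{L^2(\Omega)^n}\le \omega^2\|\rho\|_{L^\infty(\Omega)}\|\bu\|_{L^2(\Omega)^n}$; composing with the embedding $L^2(\Omega)^n\hookrightarrow H^{-1}(\Omega)^n$ gives $\bh\in H^{-1}(\Omega)^n$ with
\[
\|\bh\|_{H^{-1}(\Omega)^n}\le C_\Omega\,\omega^2\|\rho\|_{L^\infty(\Omega)}\,\|\bu\|_{H^1(\Omega)^n},
\]
where $C_\Omega$ depends only on $\Omega$. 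Hence Lemma~\ref{eq:cornormal} applies and produces $\bg\in H^{-1/2}(\partial\Omega)^n$, which by definition is $\ct_{\bnu}(\bu)$, such that $\Psi(\bu,\bv)=-(\bh,\bv)_{\Omega}+(\bg,\gamma\bv)_{\partial\Omega}$ holds for every $\bv\in H^1(\Omega)^n$ by \eqref{def:innerproduct} (in particular for the $\bv$ fixed in the statement). Substituting $\bh=-\omega^2\rho\bu$, so that $-(\bh,\bv)_{\Omega}=(\omega^2\rho\bu,\bv)_{\Omega}$, turns this identity into \eqref{eq:estimate_conormal2}.

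Finally, for \eqref{ineq:estimate_conormal2} I would insert the bound from the first step into the uniqueness estimate \eqref{ineq:estimate_conormal} of Lemma~\ref{eq:cornormal}:
\[
\|\ct_{\bnu}(\bu)\|_{H^{-1/2}(\partial\Omega)^n}\le \eta\big(\|\bu\|_{H^1(\Omega)^n}+\|\bh\|_{H^{-1}(\Omega)^n}\big)\le \eta\big(1+C_\Omega\,\omega^2\|\rho\|_{L^\infty(\Omega)}\big)\|\bu\|_{H^1(\Omega)^n},
\]
and rename the constant on the right as $\eta$. There is no real obstacle here: the statement is a bookkeeping corollary of Lemma~\ref{eq:cornormal}. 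The only points deserving care are (i) spelling out the implicit hypothesis $\mathcal{L}_{\mathcal{C}}\bu+\omega^2\rho\bu=\mathbf{0}$ that makes $\ct_{\bnu}(\bu)$ well defined, and (ii) noting that the final constant is uniform since it is assembled only from $\eta$ in Lemma~\ref{eq:cornormal}, the embedding constant $C_\Omega$, and the fixed data $\omega$ and $\|\rho\|_{L^\infty(\Omega)}$.
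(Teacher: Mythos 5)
Your proof is correct and follows essentially the same route as the paper: both apply Lemma~\ref{eq:cornormal} directly with $\bh=-\omega^2\rho\bu$, bound $\|\bh\|_{H^{-1}(\Omega)^n}$ by $\|\omega^2\rho\bu\|_{L^2(\Omega)^n}\lesssim\|\bu\|_{H^1(\Omega)^n}$, and then invoke \eqref{ineq:estimate_conormal}. Your remark making explicit the implicit standing hypothesis $\mathcal{L}_{\mathcal{C}}\bu=\bh$ is a sensible clarification the paper leaves unstated, but otherwise the two arguments coincide.
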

\begin{proof}
We can easily obtain  \eqref{eq:estimate_conormal2} by using \eqref{def:innerproduct}. Using the definitions of dual norms $\|\cdot\|_{H^{-1}(\Omega)^n}$ and $\|\cdot\|_{L^{2}(\Omega)^n}$ we have
\begin{align*}
\|\bh\|_{H^{-1}(\Omega)^n}&=\|\omega^2\rho\,\bu\|_{H^{-1}(\Omega)^n}=\sup_{{\bf0}\neq\mathbf w\in H^{1}(\Omega)^n}\dfrac{\big|(\omega^2\rho\bu,\,\mathbf w)_{\Omega}\big|}{\|\mathbf w\|_{H^{1}(\Omega)^n}}\leq\sup_{{\bf0}\neq\bw\in L^{2}(\Omega)^n}\dfrac{\big|(\omega^2\rho\bu,\,\mathbf w)_{\Omega}\big|}{\|\mathbf w\|_{L^{2}(\Omega)^n}}\\[2pt]
&=\|\omega^2\rho\,\bu\|_{L^{2}(\Omega)^n}\leq \eta\, \|\bu\|_{H^{1}(\Omega)^n},
\end{align*}
where  $\eta=\eta(\rho,\omega,\Omega)$ is a positive constant. Therefore, \eqref{ineq:estimate_conormal2} follows from \eqref{ineq:estimate_conormal}. The proof is complete.
\end{proof}

The next lemma, which can be named   as Rellich's lemma in the linear elasticity, can be proved by generalizing the arguments in \cite{Haher1998}.
\begin{lem}\label{Rellich}
Let $B_r$ be an appropriate ball  centered at origin with a  radius $r \in \mathbb R_+$, and assume that $\bu^s$ is a radiating solution to
$$
\mu \Delta\bu^s+(\lambda +\mu )\nabla( \nabla\cdot\bu^s)+ \omega^2\,\rho\bu^s={\bf 0},\quad \mu >0,\quad n \lambda +2 \mu >0,\quad\rho>0
$$
in $|\bx|\geq r$.
If
\begin{align}\label{ineq:rellich}
\Im\big( \int_{\partial B_r} \ct_{\bnu}\bu^s\cdot\overline{\bu^s} \rmd s  \big)\leq 0,
\end{align}
 then $\bu^s=\bf 0$ in $|\bx|\geq r$.
\end{lem}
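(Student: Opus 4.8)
The plan is to reduce the statement to the classical scalar Rellich lemma through the Helmholtz--Lam\'e decomposition, using a Betti identity on an annulus to push the sign hypothesis on $\partial B_r$ out to spatial infinity. First I would write the radiating field as $\bu^s=\bu^{\mathrm p,s}+\bu^{\mathrm s,s}$ with $\bu^{\mathrm p,s}=-k_{\mathrm p}^{-2}\nabla(\nabla\cdot\bu^s)$ and $\bu^{\mathrm s,s}=k_{\mathrm s}^{-2}\nabla\times(\nabla\times\bu^s)$, where $k_{\mathrm p}^2=\omega^2\rho/(\lambda+2\mu)$ and $k_{\mathrm s}^2=\omega^2\rho/\mu$ are positive by the strong convexity $\mu>0$, $n\lambda+2\mu>0$ (which indeed forces $\lambda+2\mu>0$). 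Taking the divergence and the curl of the Navier equation shows that every Cartesian component of $\bu^{\mathrm p,s}$ (resp.\ $\bu^{\mathrm s,s}$) satisfies the scalar Helmholtz equation with wavenumber $k_{\mathrm p}$ (resp.\ $k_{\mathrm s}$) in $|\bx|>r$; by the very meaning of ``radiating solution'' both parts obey the Sommerfeld condition componentwise, hence admit far-field patterns $\bu^{\mathrm p,\infty},\bu^{\mathrm s,\infty}$ and an expansion of the form \eqref{ineq:farfiel}.

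Next I would apply the first Betti formula (Corollary~\ref{cor:conormal}, cf.\ \eqref{eq:estimate_conormal2}) on the annulus $A_R:=B_R\setminus\overline{B_r}$, $R>r$, with $\bv=\bu^s$, which gives
\[
\int_{\partial A_R}\ct_{\bnu}\bu^s\cdot\overline{\bu^s}\,\rmd s=\int_{A_R}\big([\mathcal{C}:\nabla\bu^s]:\overline{\nabla\bu^s}-\omega^2\rho\,|\bu^s|^2\big)\rmd\bx .
\]
Because $\mathcal{C}$ is real with the major symmetry \eqref{eq:symm1} and $\rho>0$, the right-hand side is real; on the left, $\partial A_R=\partial B_R\cup\partial B_r$ carries the outward normal $\bnu$ on $\partial B_R$ and the inward normal $-\bnu$ on $\partial B_r$. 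Taking imaginary parts and using the hypothesis \eqref{ineq:rellich} then yields
\[
\Im\int_{\partial B_R}\ct_{\bnu}\bu^s\cdot\overline{\bu^s}\,\rmd s=\Im\int_{\partial B_r}\ct_{\bnu}\bu^s\cdot\overline{\bu^s}\,\rmd s\le 0\qquad\text{for all }R>r .
\]

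Finally I would pass to the limit $R\to\infty$. Inserting the decomposition into $\ct_{\bnu}\bu^s$ on $\partial B_R$ and using the radiation asymptotics --- $\bu^{\mathrm p,s}$ asymptotically longitudinal, $\bu^{\mathrm s,s}$ asymptotically transversal, $\partial_{|\bx|}\bu^{\mathrm t,s}=\imath k_{\mathrm t}\bu^{\mathrm t,s}+o(|\bx|^{-(n-1)/2})$, and the compressional/shear cross terms oscillating like $\exp(\imath(k_{\mathrm p}-k_{\mathrm s})|\bx|)$ (so they drop out, as $k_{\mathrm p}\neq k_{\mathrm s}$) --- a standard computation gives
\[
\lim_{R\to\infty}\Im\int_{\partial B_R}\ct_{\bnu}\bu^s\cdot\overline{\bu^s}\,\rmd s=\gamma_{\mathrm p}\,\|\bu^{\mathrm p,\infty}\|_{L^2(\mathbb{S}^{n-1})^n}^2+\gamma_{\mathrm s}\,\|\bu^{\mathrm s,\infty}\|_{L^2(\mathbb{S}^{n-1})^n}^2 ,
\]
with $\gamma_{\mathrm p},\gamma_{\mathrm s}>0$ explicit positive multiples of $k_{\mathrm p}(\lambda+2\mu)$ and $k_{\mathrm s}\mu$ (again positive by strong convexity). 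Comparing with the previous display forces $\bu^{\mathrm p,\infty}=\bu^{\mathrm s,\infty}=\mathbf 0$; applying the classical scalar Rellich lemma (see \cite{Haher1998}) componentwise to $\bu^{\mathrm p,s}$ and $\bu^{\mathrm s,s}$, each a radiating Helmholtz solution in $|\bx|>r$ with vanishing far field, we get $\bu^{\mathrm p,s}=\bu^{\mathrm s,s}=\mathbf 0$, i.e.\ $\bu^s=\mathbf 0$ in $|\bx|\ge r$.

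The genuinely delicate point is this last limit: one must check that the mixed compressional--shear boundary contributions on $\partial B_R$ indeed vanish as $R\to\infty$ and that the surviving diagonal terms reassemble with the correct positive coefficients $\gamma_{\mathrm p},\gamma_{\mathrm s}$ (this is where the convexity condition is really used). Everything else is routine manipulation of Betti's identity together with an appeal to the scalar Helmholtz theory, following the pattern of \cite{Haher1998}.
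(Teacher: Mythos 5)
The paper offers no proof of this lemma, merely asserting it ``can be proved by generalizing the arguments in \cite{Haher1998}''; so your proposal is not competing with a written argument but rather supplying one, and the route you choose --- Helmholtz--Lam\'e decomposition, Betti on the annulus $B_R\setminus\overline{B_r}$ to propagate the sign hypothesis to $\partial B_R$, the positive-definite far-field identity, then scalar Rellich componentwise --- is precisely the standard generalization the authors had in mind.

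One step, however, is stated too loosely, and it is exactly the one you yourself flag as ``genuinely delicate.'' You claim the mixed compressional--shear contributions on $\partial B_R$ ``drop out'' because they oscillate like $e^{\imath(\kappa_{\mathrm p}-\kappa_{\mathrm s})R}$. Oscillation alone does not produce a limit; an oscillating non-convergent quantity cannot be discarded in a $R\to\infty$ argument without a supplementary device. The correct reasons the cross terms disappear are (i) the geometric orthogonality at leading order --- $\bu^{\mathrm p,\infty}(\hat{\bx})$ is parallel to $\hat{\bx}$, $\bu^{\mathrm s,\infty}(\hat{\bx})$ is perpendicular to $\hat{\bx}$, so $\bu^{\mathrm p,\infty}\cdot\overline{\bu^{\mathrm s,\infty}}=0$ pointwise on $\mathbb{S}^{n-1}$, and the same pointwise orthogonality holds for the leading traction terms --- which kills the $O(1)$ cross contribution exactly; and (ii) the remaining cross terms are $O(R^{-1})$ by the far-field expansion and go to zero. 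Alternatively, since you already proved $\Im\int_{\partial B_R}\ct_{\bnu}\bu^s\cdot\overline{\bu^s}\,\rmd s$ is \emph{constant} in $R$, any residual oscillatory piece must be identically zero --- but that inference should be spelled out rather than left as ``so they drop out.'' Once either of these arguments is inserted, the far-field identity $\lim_R \Im\int_{\partial B_R}\ct_{\bnu}\bu^s\cdot\overline{\bu^s}\,\rmd s=\gamma_{\mathrm p}\|\bu^{\mathrm p,\infty}\|^2+\gamma_{\mathrm s}\|\bu^{\mathrm s,\infty}\|^2$ with $\gamma_{\mathrm p},\gamma_{\mathrm s}>0$ is solid, and the rest of your argument (hypothesis forces both far-field patterns to vanish, then scalar Rellich componentwise) is correct. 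Everything else --- the positivity of $\kappa_{\mathrm p},\kappa_{\mathrm s}$ from strong convexity, the realness of $\Ccal$ and $\rho$ giving a real volume term in Betti's identity, the sign bookkeeping on $\partial A_R$ --- is accurate.
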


By Lemma \ref{Rellich}, we can easily derive the following result.
\begin{thm}\label{th: thm1}
Suppose $\bu\in  H^1(\mathbb{R}^n)^n$ solves
\beq\label{du:r3}
\mathcal{L}_{\mathcal{C}}\bu+\omega^2\rho(\bx)\bu=\bf 0,
\eeq
where $\Ccal(\bx)$, $\omega$, and $\rho$ are defined as in \eqref{eq:lame1}. If $\bu$  satisfies the Kupradze radiation conditions, then $\bu$ vanishes in $\mathbb{R}^n$.
\end{thm}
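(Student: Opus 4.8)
The plan is to reduce Theorem~\ref{th: thm1} to Lemma~\ref{Rellich} by isolating the behaviour of $\bu$ in a region where the coefficients are constant. Since $\Ccal$ and $\rho$ agree with the background values $(\mathcal{C}^e,\rho_e)$ outside some bounded set (recall the standing assumption that $(\mathcal{C},\rho)=(\mathcal{C}^e,\rho_e)$ in $\mathbb{R}^n\backslash\overline{\Omega}$), I would first pick $r\in\mathbb{R}_+$ large enough that $\overline{\Omega}\subset B_r$. Then in $|\bx|\geq r$ the field $\bu$ solves the constant-coefficient Lam\'e equation $\mu_e\Delta\bu+(\lambda_e+\mu_e)\nabla(\nabla\cdot\bu)+\omega^2\rho_e\bu={\bf 0}$, and by hypothesis it satisfies the Kupradze radiation condition, so it is a radiating solution in the sense required by Lemma~\ref{Rellich}.

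Next I would verify the sign condition \eqref{ineq:rellich} on $\partial B_r$. The idea is to integrate the Lam\'e equation against $\overline{\bu}$ over the bounded region $B_r$ itself (not the exterior), using the conormal/Betti integration-by-parts identity from Corollary~\ref{cor:conormal}, namely $\Psi(\bu,\overline{\bu})=(\omega^2\rho\,\bu,\overline{\bu})_{B_r}+(\ct_{\bnu}(\bu),\gamma\overline{\bu})_{\partial B_r}$. Since $\bu\in H^1(\mathbb{R}^n)^n$ and $\mathcal{L}_{\mathcal{C}}\bu+\omega^2\rho\bu={\bf 0}$ holds in all of $\mathbb{R}^n$, rearranging gives
\[
\int_{\partial B_r}\ct_{\bnu}(\bu)\cdot\overline{\bu}\,\rmd s=\int_{B_r}[\Ccal:\overline{\nabla\bu}]:\nabla\bu\,\rmd\bx-\omega^2\int_{B_r}\rho|\bu|^2\,\rmd\bx.
\]
Taking imaginary parts, the first term on the right is real (it is a Hermitian-type quantity by the symmetry \eqref{eq:symm1} of $\Ccal$ and $\Ccal$ being real-valued — one checks $[\Ccal:\overline{\nabla\bu}]:\nabla\bu$ has real integral), so $\Im\int_{\partial B_r}\ct_{\bnu}(\bu)\cdot\overline{\bu}\,\rmd s=-\omega^2\int_{B_r}\Im\rho\,|\bu|^2\,\rmd\bx\leq 0$ because $\Im\rho\geq 0$ by the standing assumption on $\rho$. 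This is exactly the hypothesis \eqref{ineq:rellich}, so Lemma~\ref{Rellich} yields $\bu={\bf 0}$ in $|\bx|\geq r$.

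Finally I would upgrade ``$\bu={\bf 0}$ outside $B_r$'' to ``$\bu={\bf 0}$ in $\mathbb{R}^n$'' by a unique continuation argument. Inside $B_r$ the equation \eqref{du:r3} is a second-order elliptic system with (at least) Lipschitz or bounded measurable coefficients under the Legendre ellipticity \eqref{eq:ellip1}; since $\bu$ together with its Cauchy data vanishes on an open subset of $\mathbb{R}^n$ (a neighbourhood of $\partial B_r$ from the exterior, hence on an open set straddling $\partial B_r$ where the equation holds), the weak unique continuation property for the Lam\'e system forces $\bu\equiv{\bf 0}$ throughout the connected region where the equation is valid, i.e. all of $\mathbb{R}^n$. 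I expect the main obstacle to be stating the unique continuation step at the right level of generality: for genuinely anisotropic $\Ccal$ with merely bounded measurable entries, unique continuation can fail, so one must invoke the regularity actually assumed on $\Ccal$ in the paper (presumably Lipschitz, or piecewise smooth with the obstacle/medium interfaces) and cite the appropriate UCP result for elliptic systems; the Rellich step itself and the sign computation are routine once the geometry is set up.
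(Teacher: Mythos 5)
Your proof follows the same route as the paper's own argument: integrate the Lam\'e equation against $\overline{\bu}$ over a large ball $B_r$, apply Betti's first formula to produce the boundary term, take imaginary parts and use $\Im\rho\geq 0$ together with the reality and symmetry of $\Ccal$ to verify the sign condition \eqref{ineq:rellich}, invoke Lemma~\ref{Rellich} to get $\bu={\bf 0}$ outside $B_r$, and finish with unique continuation. Your closing caveat about unique continuation for anisotropic $\Ccal$ of limited regularity is well placed and in fact more careful than the paper, which simply appeals to ``the unique continuation'' without a citation or a regularity hypothesis, so the two arguments stand at the same level of rigor on that final step.
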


\begin{proof}
Let $B_r$ be an appropriate large ball with center at origin and radius $r$. Multiplying $\overline{\bu}$ on both sides of \eqref{du:r3} and integrating over $B_r$, we have
\begin{align*}
\int_{B_r}\mathcal{L}_{\mathcal{C}}\bu\cdot \overline{\bu}\,\rmd\bx+\int_{B_r}\,\omega^2\rho(\bx)\bu\cdot \overline{\bu}\,\rmd\bx=0 .
\end{align*}
By applying Betti's first formula (cf. \cite{Alves1997}), we get
\begin{align*}
\int_{\partial B_r}\bnu\cdot(\Ccal(\bx):\nabla\bu)\cdot\overline{\bu}\,\rmd s(\bx)-\int_{B_r}(\Ccal(\bx):\nabla\overline{\bu}):\nabla\bu dx+\int_{B_r}\omega^2\rho(\bx)|\bu|^2\rmd\bx=0.
\end{align*}
Taking the imaginary part of the above equation, we obtain
$$
\Im\int_{\partial B_r}\bnu\cdot(\Ccal(\bx):\nabla \bu)\cdot\,\overline{\bu}\,\rmd s(\bx)=-\int_{B_r}\omega^2\,\Im \rho\,|\bv|^2 \,\rmd\bx\leq0.
$$
By Lemma \ref{Rellich}, we have $\bu=\bf 0$ outside $B_r$. Then it follows from  the unique continuation that $\bu=\bf 0$ in $\mathbb{R}^n$.
\end{proof}
\subsection{Auxiliary lemmas for Case 1}\label{subsect:auxiliary}
We derive several technical auxiliary lemmas for proving Theorem \ref{thm:main1} in Sections \ref{traction-free obstacle} and \ref{rigid obstacle}.
 We first consider Case 1 in Theorem~\ref{thm:main1}, where $D$ is a traction-free obstacle. In what follows, we let $B_r$ signify a central ball of radius $r$ containing $\Omega$, and consider the following two scattering problems:
Given $\bp\in H^{-1/2}(\partial D)^n$, $\bh_1\in H^{1/2}(\partial \Omega)^n$, $\bh_2\in H^{-1/2}(\partial \Omega)^n$ and $\bff $ with $\supp(\bff)\subset B_{r_0}\backslash\overline{\Omega}\subset B_{r}\backslash\overline{\Omega}$,
find $(\bv,\,\bu^s)\in H^1(\Omega\backslash\overline{D})^n\times H^1(\mathbb{R}^n\backslash\overline{\Omega})^n$ such that
\begin{align}\label{mode:transmission}
\left\{ \begin{array}{ll}
\mathcal{L}_{\mathcal{C}}\bv
+\omega^2\rho(\bx)\bv=\bf 0&\mbox{in}\ \  \Omega\backslash \overline{D},\quad \\[5pt]
\mathcal{L}_{\mathcal{C}^e}\bu^s
+\omega^2\rho_e\bu^s=\bff&\mbox{in}\ \ \mathbb{R}^n\backslash \overline{\Omega},\\[5pt]
\bu^s=\bu^{p,\,s}+\bu^{s,\,s}&\mbox{in}\ \ \mathbb{R}^n\backslash \overline{\Omega},\\[5pt]
\ct_{\bnu}(\bu)=\bp&\mbox{on}\ \ \partial{D},\\[5pt]
\bv=\bu^{s}+\bh_1,\quad \ct_{\bnu}(\bv)=\ct_{\bnu}(\bu^{s})+\bh_2&\mbox{on} \ \ \partial{\Omega},\\[5pt]
\bu^{\mathrm p,\,s}=-\frac{1}{k_{\mathrm p}^2} \nabla (\nabla \cdot \bu^{s} ),\quad \bu^{\mathrm s,\,s}= \frac{1}{k_{\mathrm s} ^2}  \nabla \times  ( \nabla \times   \bu^{s} ) & \mbox{in}\ \ \mathbb{R}^n\backslash\overline{\Omega}, \medskip\\
\lim_{|\bx| \to\infty}|\bx|^{(n-1)/2}\left(  \frac{\partial{\bu^{\mathrm  t,s}}}{\partial{|\bx|}}   -i\kappa_{t}\bu^{\mathrm  t,s} \right)=0,& {\mathrm  t=\mathrm  p,\mathrm s,} \\[5pt]
\end{array}
 \right.
\end{align}
and  find $(\bv,\,\bu^s)\in  H^1(\Omega\backslash\overline{D})^n\times H^1(B_r\backslash\overline{\Omega})^n$ satisfying the following truncated system:
\begin{align}\label{mode:transmisTrans}
\left\{ \begin{array}{ll}
\mathcal{L}_{\mathcal{C}}\bv
+\omega^2\rho(\bx)\bv=\bf 0&\mbox{in}\ \  \Omega\backslash \overline{D},\quad \\[5pt]
\mathcal{L}_{\mathcal{C}^e}\bu^s
+\omega^2\rho_e\bu^s=\bff&\mbox{in}\ \ \mathbb{R}^n\backslash \overline{\Omega},\\[5pt]
\bu^s=\bu^{\mathrm p,\,s}+\bu^{\mathrm s,\,s}&\mbox{in}\ \  B_r\backslash \overline{\Omega},\\[5pt]
\ct_{\bnu}(\bu)=\bp&\mbox{on}\ \  \partial{D},\\[5pt]
\bu\big|_{\partial \Omega}=\bu^{s}\big|_{\partial \Omega}+\bh_1,\quad \ct_{\bnu}(\bu)=\ct_{\bnu}(\bu^{s})+\bh_2&\mbox{on}\ \  \partial{\Omega},\\[5pt]
\bu^{\mathrm p,\,s}=-\frac{1}{k_{\mathrm p}^2} \nabla (\nabla \cdot \bu^{s} ),\quad \bu^{\mathrm s,\,s}= \frac{1}{k_{\mathrm s} ^2}  \nabla \times  ( \nabla \times   \bu^{s} ) & \mbox{in}\ \ B_r\backslash\overline{\Omega}, \medskip\\
\ct_{\bnu}(\bu^s)=\Lambda \bu^s&\mbox{on}\ \ \partial{B_r},
\end{array}
 \right.
\end{align}
where $\Lambda$ is the  Dirichlet-to-Newmann (DtN) map introduced in \cite{Charalambopoulos03,Isakov2006} such that
\begin{align}\label{defin:DtN}
\Lambda:H^{1/2}(\partial B_r)^n&\longrightarrow H^{-1/2}(\partial B_r)^n, \nonumber\\
\widetilde{\bg}&\longmapsto \ct_{\bnu}(\widetilde{\bq})
\end{align}
with a radiating solution $\widetilde{\bq}$ for Navier equation
$$
\left\{
\begin{array}{ll}
\mu_e\Delta\widetilde{\bq}+(\lambda_e+\mu_e)\nabla( \nabla\cdot\widetilde{\bq})+\omega^2\,\rho_e\widetilde{\bq}=\bf 0 &\mbox{in }  \quad \mathbb{R}^n\backslash\overline{B_r}, \\
\widetilde{\bq}=\widetilde{\bg}& \mbox{on}  \quad \partial B_r,
\end{array}
\right.
$$
where $\lambda_e, \mu_e$ and $\rho_e$ are real constants satisfying the strong convexity condition \eqref{eq:convexity1}.

In the following, we establish the equivalence of problem \eqref{mode:transmission} and problem \eqref{mode:transmisTrans} in Lemma \ref{Pro:transmisTranss}. Therefore we can prove that   \eqref{mode:transmission} admits a unique solution and satisfies certain a priori estimates.
\begin{lem}\label{Pro:transmisTranss}
The scattering problems \eqref{mode:transmission} and \eqref{mode:transmisTrans} are equivalent.
\end{lem}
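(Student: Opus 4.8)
The plan is to prove the equivalence by the standard ``truncation via DtN'' device: one passes between the unbounded problem \eqref{mode:transmission} and the bounded problem \eqref{mode:transmisTrans} by cutting at the artificial sphere $\partial B_r$ and using the well-posedness of the exterior Dirichlet problem for the homogeneous Navier equation, whose uniqueness is furnished by the Rellich-type Lemma~\ref{Rellich} (and Theorem~\ref{th: thm1}) and whose gluing properties are controlled by the conormal-derivative machinery of Lemma~\ref{eq:cornormal} and Corollary~\ref{cor:conormal}. Throughout, the component $\bv$ in $\Omega\backslash\overline D$ and all the transmission/boundary conditions on $\partial D$ and $\partial\Omega$ are literally the same in both systems, so only the behaviour of $\bu^s$ outside $\Omega$ and the coupling at $\partial B_r$ need attention.

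First I would treat the ``only if'' direction. Suppose $(\bv,\bu^s)$ solves \eqref{mode:transmission}. Since $\supp(\bff)\subset B_{r_0}\backslash\overline\Omega\subset B_r\backslash\overline\Omega$, the scattered field $\bu^s$ satisfies the homogeneous Navier equation $\mathcal{L}_{\mathcal{C}^e}\bu^s+\omega^2\rho_e\bu^s=\bf0$ in $\mathbb{R}^n\backslash\overline{B_r}$ and obeys the Kupradze radiation condition there. Hence $\bu^s$ is precisely the radiating solution of the exterior Dirichlet problem with boundary data $\widetilde\bg=\bu^s|_{\partial B_r}$ appearing in the definition \eqref{defin:DtN} of $\Lambda$, so $\ct_{\bnu}(\bu^s)|_{\partial B_r}=\Lambda(\bu^s|_{\partial B_r})$. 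Restricting $(\bv,\bu^s)$ to $H^1(\Omega\backslash\overline D)^n\times H^1(B_r\backslash\overline\Omega)^n$ therefore yields a solution of \eqref{mode:transmisTrans}.

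For the converse, suppose $(\bv,\bu^s)$ solves the truncated system \eqref{mode:transmisTrans}. Let $\widetilde\bq$ be the radiating solution of the exterior Dirichlet problem with boundary data $\bu^s|_{\partial B_r}$, which exists and is unique (uniqueness by Lemma~\ref{Rellich}, existence by a standard layer-potential or variational argument in $\mathbb{R}^n\backslash\overline{B_r}$). Define $\bu^s_\ast$ to equal $\bu^s$ in $B_r\backslash\overline\Omega$ and $\widetilde\bq$ in $\mathbb{R}^n\backslash\overline{B_r}$. By construction the two pieces share the same Dirichlet trace on $\partial B_r$, so $\bu^s_\ast\in H^1_{\mathrm{loc}}(\mathbb{R}^n\backslash\overline\Omega)^n$; moreover the boundary condition $\ct_{\bnu}(\bu^s)|_{\partial B_r}=\Lambda(\bu^s|_{\partial B_r})=\ct_{\bnu}(\widetilde\bq)|_{\partial B_r}$ shows that the conormal derivatives also match across $\partial B_r$. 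Testing the weak formulations on $B_r\backslash\overline\Omega$ and on $\mathbb{R}^n\backslash\overline{B_r}$ against an arbitrary compactly supported $\bphi\in H^1(\mathbb{R}^n\backslash\overline\Omega)^n$ and adding, the two boundary contributions on $\partial B_r$ cancel by Corollary~\ref{cor:conormal}, whence $\bu^s_\ast$ is a global weak solution of $\mathcal{L}_{\mathcal{C}^e}\bu^s_\ast+\omega^2\rho_e\bu^s_\ast=\bff$ in $\mathbb{R}^n\backslash\overline\Omega$ (using again that $\bff$ vanishes in a neighbourhood of $\partial B_r$). As $\widetilde\bq$ is radiating, $\bu^s_\ast$ satisfies the Kupradze condition, and the decomposition $\bu^s_\ast=\bu^{\mathrm p,s}_\ast+\bu^{\mathrm s,s}_\ast$ is inherited from the defining formulas $\bu^{\mathrm p,s}_\ast=-k_{\mathrm p}^{-2}\nabla(\nabla\cdot\bu^s_\ast)$, $\bu^{\mathrm s,s}_\ast=k_{\mathrm s}^{-2}\nabla\times(\nabla\times\bu^s_\ast)$. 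Thus $(\bv,\bu^s_\ast)$ solves \eqref{mode:transmission}, and the two correspondences are mutually inverse.

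I expect the main obstacle to be the gluing step in the converse direction: one must verify carefully that simultaneous matching of the Dirichlet trace \emph{and} the conormal derivative across the artificial interface $\partial B_r$ is exactly what forces the concatenated field to be a genuine weak solution of the Navier equation there — this is the point at which the characterization of the conormal derivative in Lemma~\ref{eq:cornormal} (equivalently Corollary~\ref{cor:conormal}) is essential, together with the observation that $\bff$ has no mass near $\partial B_r$. A secondary, more routine point to record is the well-posedness of the exterior Dirichlet problem that defines $\Lambda$, for which uniqueness is the Rellich lemma and existence is classical.
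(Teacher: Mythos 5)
Your proof is correct, but it takes a genuinely different route from the paper's in the converse direction. The paper invokes the Betti/Green representation formula: it writes $\bu^s$ in $B_r\backslash\overline\Omega$ as a sum of boundary potentials built from the explicit fundamental solution $\Phi$ in \eqref{eq:fundamental}, shows the contribution from $\partial B_r$ vanishes because the DtN condition combined with the radiation property of the columns of $\Phi$ makes that boundary integral identically zero (identity \eqref{eq:Betti 2 identity}), and then observes that the remaining representation formula — potentials over $\partial\Omega$ plus a volume potential over $\supp\bff$ — is manifestly defined and radiating in all of $\mathbb{R}^n\backslash\overline\Omega$, so it supplies the required extension. You instead avoid explicit potential theory entirely: you solve the exterior Dirichlet problem with data $\bu^s|_{\partial B_r}$ to produce $\widetilde\bq$, glue it to $\bu^s$ across $\partial B_r$, and verify weak-solution gluing via matched Dirichlet traces (by construction) and matched conormal traces (precisely the DtN condition $\ct_{\bnu}\bu^s=\Lambda\bu^s=\ct_{\bnu}\widetilde\bq$), with Corollary~\ref{cor:conormal} handling the integration-by-parts bookkeeping. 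The paper's approach gives an explicit closed-form extension and puts the weight on properties of the fundamental solution; yours is more abstract and puts the weight on well-posedness of the exterior Dirichlet problem and the transmission-condition characterization of global weak solutions. Both are standard truncation-by-DtN devices and your transmission-gluing version is perhaps slightly cleaner in that it does not require writing out or manipulating $\Phi$; the one thing you should record explicitly (and you do flag it) is existence and uniqueness of $\widetilde\bq$, since in the paper this is implicit in the definition \eqref{defin:DtN} of $\Lambda$.
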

\begin{proof}
By applying the definition of $\Lambda$, it is easy to see that if $(\bv,\bu^s)$ is a solution to the scattering problem \eqref{mode:transmission}, then $ (\bv,\bu^s)\big|_{B_r\backslash\overline{D}}$ solves the scattering problem \eqref{mode:transmisTrans}.

On the other hand, suppose  $(\bv,\bu^s)$ is a solution to the truncated system \eqref{mode:transmisTrans}. By applying the integral representation  and $\ct_{\bnu}\,\bu^s=\Lambda\bu^s$ on $\partial B_r$, we can derive that
\begin{align}\label{eq:Green's representation}
\bu^s(\bx)=&\int_{\partial B_r}\Big\{ \big\{\ct^\by_{\bnu}\Phi(\bx,\by)\}^\top\cdot\bu^s(\by)-\Phi(\bx,\by)\cdot \Lambda \bu^s(\by)\Big\}\,\rmd s(\by)+\int_{B_r\backslash\overline{\Omega}}\Phi(\bx,\by)\cdot\bff(\by)\,\rmd\by\nonumber\\
&-\int_{\partial\Omega}\Big\{\big\{\ct^\by_{\bnu}\Phi(\bx,\by)\big\}^\top\cdot\bu^s(\by)
-\Phi(\bx,\by)\cdot\ct^\by_{\bnu}\bu^s(\by)\Big\}\,\rmd s(\by),
\end{align}
where $\Phi(\bx,\by)$ is the fundamental solution to the Lam\'{e} system \eqref{eq:incident1} with the form
\begin{align}\label{eq:fundamental}
\Phi(\bx,\by)=&\dfrac{\kappa_{\mathrm s}^2}{4\pi\omega^2}\dfrac{e^{\imath\kappa_{\mathrm s}|\bx-\by|}}{|\bx-\by|}I +\dfrac{1}{4\pi\omega^2}\nabla_{\bx}\nabla_{\bx}^\top\dfrac{e^{\imath\kappa_{\mathrm s} |\bx-\by|}-e^{\imath\kappa_{\mathrm p}|\bx-\by|}}{|\bx-\by|}
\end{align}
and
\begin{align}\label{eq:fundamentalTran}
\ct^{\by}_{\bnu}\Phi(\bx,\by)=&\Big[\ct^{\by}_{\bnu}
\Big(\Phi(\bx,\by)(:,1)\Big),\,\ct^{\by}_{\bnu}\Big(\Phi(\bx,\by)(:,2)\Big),\,\ct^{\by}_{\bnu}\Big(\Phi(\bx,\by)(:,3)\Big)\Big].
 \end{align}
Here, $I$ is the identity matrix, $\Phi(\bx,\by)(:,j)$ denotes the $j$-th column of $\Phi(\bx,\by)$, $j=1,2,3$. $\ct^{\by}_{\bnu}$ is the exterior unit normal vector to the boundaries with respect to $\by$. Notice that $\Phi(\bx,\by)=\Phi(\bx,\by)^{\top}$. Then, by combining the definition of $\Lambda$ with the fact that each column of $\Phi(\bx,\by)$ satisfies the Kupradze radiation condition, we can obtain that
\begin{align}\label{eq:Betti 2 identity}
\int_{\partial B_r}\Big\{ \big\{\ct^\by_{\bnu}\Phi(\bx,\by)\big\}^\top\cdot\bu^s(\by)-\Phi(\bx,\by)\cdot \Lambda \bu^s(\by)\Big\}\rmd s(\by)=0.
\end{align}
Substituting \eqref{eq:Betti 2 identity} into \eqref{eq:Green's representation} yields
\begin{align}\notag 
\bu^s(\bx)=&-\int_{\partial\Omega}\Big\{\big\{\ct^\by_{\bnu}\Phi(\bx,\by)\big\}^\top\cdot\bu^s(\by)
-\Phi(\bx,\by)\cdot\ct^\by_{\bnu}\bu^s(\by)\Big\}\,\rmd s(\by) \nonumber \\
&+\int_{B_r\backslash\overline{\Omega}}\Phi(\bx,\by)\cdot\bff(\by)\,\rmd\by. \notag
\end{align}
Clearly, $\bu^s$ can be extended to a function belong to $H^1_{loc}(\mathbb{R}^n\backslash \overline{\Omega})^n$ (still denoted by $\bu^s$). Since each column of $\Phi(\bx,\by)$ or $\ct^{\by}_{\bnu}\Phi(\bx,\by)$ satisfies the Kupradze radiation condition, the new function $\bu^s\in H^1_{loc}(\mathbb{R}^n\backslash \overline{\Omega})^n$ also satisfies the Kupradze radiation condition. Hence, $(\bv,\bu^s)$ solves problem \eqref{mode:transmission}.
\end{proof}

In next lemma, we prove that there exists a unique solution to the system \eqref{mode:transmission}, which is determined by the inputs $\bp$, $\bh_1$, $\bh_2$ and $\bff$.
\begin{lem}\label{lem:unique proof}
Given $\bp\in H^{-1/2}(\partial D)^n$, $\bh_1\in H^{1/2}(\partial \Omega)^n$, $\bh_2\in H^{-1/2}(\partial \Omega)^n$ and $\bff$ with $\supp(\bff)\subset B_{r_0}\backslash\overline{\Omega}$, there exists a unique solution $ (\bv,\,\bu^s)\in H^1(\Omega\backslash\overline{D})^n\times H^1(\mathbb{R}^3\backslash\overline{\Omega})^n$ to the system \eqref{mode:transmission}  such that  the following estimate holds
\begin{align}\label{ineq:EstiTransmission}
\|\bv\|_{H^1(\Omega\backslash\overline{D})^n}+\|\bu^s\|_{H^1(\mathbb{R}^n\backslash\overline{\Omega})^n}&\leq C\bigg( \|\bp\|_{H^{-1/2}(\partial D)^n }+ \|\bh_1\|_{H^{1/2}(\partial \Omega)^n} \nonumber\\
&\quad+\|\bh_2\|_{ H^{-1/2}(\partial \Omega)^n}  +\|\bff\|_{L^2(B_{r_0}\backslash\overline{\Omega})^n}                                    \bigg)
\end{align}
for some constant $C>0$ depending only on $\Ccal(\bx)$, $\kappa_p$, $\kappa_s$, $\Ccal^e$, $\rho(\bx)$, $\Omega$, $D$, $B_r$ and $\omega$.
\end{lem}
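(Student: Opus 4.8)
The plan is to establish existence, uniqueness, and the a priori bound \eqref{ineq:EstiTransmission} for the equivalent truncated system \eqref{mode:transmisTrans}, which is posed on the bounded domain $B_r\backslash\overline{D}$, and then transfer everything back to \eqref{mode:transmission} via \cref{Pro:transmisTranss}. First I would recast \eqref{mode:transmisTrans} as a variational problem on the Hilbert space $\mathbf{X}:=H^1(\Omega\backslash\overline{D})^n\times H^1(B_r\backslash\overline{\Omega})^n$. Multiplying the two Lam\'e equations by test fields, integrating by parts using Betti's formula and \cref{cor:conormal}, incorporating the DtN condition $\ct_{\bnu}(\bu^s)=\Lambda\bu^s$ on $\partial B_r$, the traction data $\ct_{\bnu}(\bu)=\bp$ on $\partial D$, and the transmission conditions $\bv=\bu^s+\bh_1$, $\ct_{\bnu}(\bv)=\ct_{\bnu}(\bu^s)+\bh_2$ on $\partial\Omega$, one obtains a sesquilinear form $a(\cdot,\cdot)$ on $\mathbf{X}$ and a bounded antilinear functional $F$ whose norm is controlled by $\|\bp\|_{H^{-1/2}(\partial D)^n}+\|\bh_1\|_{H^{1/2}(\partial\Omega)^n}+\|\bh_2\|_{H^{-1/2}(\partial\Omega)^n}+\|\bff\|_{L^2(B_{r_0}\backslash\overline{\Omega})^n}$. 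The Dirichlet coupling $\bv-\bu^s=\bh_1$ on $\partial\Omega$ is nonhomogeneous, so I would first lift $\bh_1$ by a bounded extension and reduce to the homogeneous transmission problem, absorbing the lift into $F$.

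Next I would show $a$ satisfies a G\r{a}rding inequality: using the Legendre ellipticity \eqref{eq:ellip1} for the $\nabla\bu$ terms, the positivity of $\Re\rho$, the sign of $\Im\rho\ge 0$, and the crucial coercivity/monotonicity property of the elastic DtN map $\Lambda$ (namely $-\Re\langle\Lambda\widetilde{\bg},\widetilde{\bg}\rangle$ is bounded below modulo a compact perturbation, and $\Im\langle\Lambda\widetilde{\bg},\widetilde{\bg}\rangle\ge 0$ for radiating solutions, as in \cite{Charalambopoulos03,Isakov2006}), I get $\Re a(\bw,\bw)\ge c\|\bw\|_{\mathbf{X}}^2 - C\|\bw\|_{L^2}^2$. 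Since $H^1\hookrightarrow L^2$ is compact on the bounded domains involved, the associated operator is Fredholm of index zero, so existence follows from uniqueness via the Fredholm alternative.

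For uniqueness I would take $\bp=\bh_1=\bh_2=\bff=\bf 0$ and show the only solution is trivial. Testing the variational identity against the solution itself and taking imaginary parts, the $\Im\rho\ge 0$ contribution and the $\Im\langle\Lambda\bu^s,\bu^s\rangle\ge 0$ contribution have the same sign, which forces $\Im\langle\Lambda\bu^s,\bu^s\rangle_{\partial B_r}=0$; by \cref{Pro:transmisTranss} (equivalently by extending $\bu^s$ as a radiating solution outside $B_r$) and Rellich's lemma \cref{Rellich}, this gives $\bu^s\equiv\bf 0$ in $\mathbb{R}^n\backslash\overline{B_r}$, hence $\bu^s=\bf 0$ and $\ct_{\bnu}(\bu^s)=\bf 0$ on $\partial B_r$; unique continuation propagates $\bu^s\equiv\bf 0$ in $B_r\backslash\overline{\Omega}$. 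The transmission conditions then give $\bv=\bf 0$ and $\ct_{\bnu}(\bv)=\bf 0$ on $\partial\Omega$, together with $\ct_{\bnu}(\bv)=\bf 0$ on $\partial D$, so $\bv$ solves a homogeneous Lam\'e system in $\Omega\backslash\overline{D}$ with vanishing Cauchy data on the outer boundary; unique continuation again yields $\bv\equiv\bf 0$. This establishes uniqueness, hence existence.

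Finally, the a priori estimate \eqref{ineq:EstiTransmission}: once uniqueness holds, the Fredholm-alternative framework gives a bounded inverse, so $\|\bw\|_{\mathbf{X}}\le C\|F\|_{\mathbf{X}'}$, and the bound on $\|F\|_{\mathbf{X}'}$ recorded above yields the claimed inequality for the truncated solution on $B_r\backslash\overline{D}$; the trace/interior elliptic estimates bound $\|\bu^s\|_{H^1(\mathbb{R}^n\backslash\overline{\Omega})^n}$ by its values on $B_r\backslash\overline{\Omega}$ plus $\|\bff\|_{L^2}$ via the Green representation derived in the proof of \cref{Pro:transmisTranss}, and the constant depends only on the stated data. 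I expect the main obstacle to be the coercivity step — establishing the sign/G\r{a}rding property of the elastic DtN map $\Lambda$ and combining it correctly with the indefinite Dirichlet transmission coupling — since the vectorial Lam\'e DtN map lacks the simple monotonicity of the scalar Helmholtz case and one must argue modulo a compact remainder; the uniqueness argument, by contrast, is a fairly standard Rellich-plus-unique-continuation chain given \cref{Rellich} and \cref{th: thm1}.
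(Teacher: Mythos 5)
Your plan mirrors the paper's own proof in all essentials: equivalence with the truncated problem via \cref{Pro:transmisTranss}, lifting of the Dirichlet coupling data, a variational formulation whose principal part is coercive modulo compact perturbations (with the DtN map $\Lambda$ split as a nonpositive operator $\Lambda_0$ plus a compact difference $\Lambda-\Lambda_0$, exactly what you call the G\r{a}rding property), Fredholm alternative with index zero, uniqueness via the imaginary-part energy identity plus Rellich's lemma \cref{Rellich} and unique continuation, and finally the a priori bound from the bounded inverse together with the estimate on the right-hand side functional. The only cosmetic difference is that the paper glues the two pieces into a single unknown $\bw\in H^1(B_r\backslash\overline{D})^n$ after lifting, whereas you phrase things on the product space with the homogeneous transmission coupling as a constraint; these are the same formulation in disguise.
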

\begin{proof}
Firstly, let $\bp=\mathbf 0,\,\bh_1=\mathbf 0,\,\bh_2=\mathbf 0,\,\bff=\mathbf 0$. It is sufficient to show that there exists only a  trivial solution to \eqref{mode:transmission}.  Post-multiplying the first  equation of \eqref{mode:transmission}, respectively, by $\overline{\bv}$ and $\overline{\bu^s}$ and  using the Betti's first formula (cf. \cite{Alves2002,Ciarelet2005,Ciarlet1988})
over $\Omega\backslash\overline{D}$ and $B_r\backslash\overline{\Omega}$ and the boundary conditions on $\partial D$ and $\partial \Omega$, we have
\begin{align}\label{eq:integra1}
\int_{\Omega\backslash\overline{D}}\,\,[\Ccal(\bx):\nabla \overline{\bv}]:\nabla \bv \,\rmd\bx&=\int_{\Omega\backslash\overline{D}}\,\omega^2\rho\,|\bv|^2\,\rmd\bx
-\int_{B_r\backslash\overline{\Omega}}\,\,[\Ccal^e:\nabla\overline{\bu^s}]:\nabla \bu^s \,\rmd s(\bx)\nonumber\\
&\,+\int_{\partial B_r}\bnu\cdot[\Ccal^e:\nabla \bu^s]\cdot\overline{\bu^s}\,\rmd s(\bx)+\int_{B_r\backslash\overline{\Omega}}\,\omega^2\,\rho_e\,|\bu^s|^2\,\rmd\bx.
\end{align}
Taking the imaginary part of the equation above, we obtain
$$
\Im\int_{\partial B_r}\bnu\cdot[\Ccal^e:\nabla \bu^s]\,\,\,\overline{\bu^s}\,\rmd s(\bx)=-\int_{\Omega\backslash\overline{D}}\omega^2\,\Im \rho\,|\bu|^2 \,\rmd\bx\leq0.
$$
From Lemma \ref{Rellich} and the unique continuation principle, we know $\bu^s={\bf 0}$ in $\Omega\backslash\overline{D}$ and $\bv={\bf 0}$ in $D$. Therefore, the uniqueness of the solution to \eqref{mode:transmission} is established.

By Lemma \ref{Pro:transmisTranss}, problems \eqref{mode:transmission} and \eqref{mode:transmisTrans} are equivalent. Thus, we only need to verify the existence of solution to \eqref{mode:transmisTrans} by the variational technique.
Without  loss of generality, we assume $\omega^2\rho_e$ is not a Dirichlet eigenvalue in $B_r\backslash\overline{\Omega}$.
It is easy to check that the vector field $\bw$, which is  defined by $\bw(\bx)=\bv(\bx)$ in $\Omega\backslash\overline{D}$ and $\bw(\bx)=\bu^s(\bx)+\widetilde{\bv}(\bx)$ in $B_r\backslash\overline{\Omega}$, satisfies
\begin{align}\label{mode:AuxiCom}
\left\{ \begin{array}{ll}
\mathcal{L}_{\mathcal{C}}\bw
+\omega^2\rho(\bx)\bw=\bff&\mbox{in}\ \ B_r\backslash \overline{D},\\[5pt]
\bw^s=\bw^{\mathrm p,\,s}+\bw^{\mathrm s,\,s}&\mbox{in}\ \  B_r\backslash \overline{\Omega},\\[5pt]
\ct_{\bnu}(\bw)=\bp&\mbox{on}\ \  \partial{D},\\[5pt]
\bw^-=\bw^+&\mbox{on}\ \ \partial \Omega,\\[5pt]
\ct_{\bnu}(\bw^-)=\ct_{\bnu}(\bw^+)+\ct_{\bnu}(\bu^{in})-\ct_{\bnu}(\widetilde{\bv})&\mbox{on}\ \  \partial \Omega,\\[5pt]
\ct_{\bnu}(\bw^-)=\Lambda\bw^{+}+\ct_{\bnu}(\widetilde{\bv})&\mbox{on}\ \ \partial B_r,\\[5pt]
\end{array}
 \right.
\end{align}
where  $\bw^{-}$ and $\bw^{+}$ stand for the limits from outside and inside $\partial \Omega$, respectively, $\Lambda$ is the DtN operator given in \eqref{defin:DtN}, $\widetilde{\bv}$ is a solution to the following equation:
\begin{align}\label{eq:DtN01}
\left\{ \begin{array}{ll}
\mu_e \Delta \widetilde{\bv}+(\lambda_e+\mu_e) \nabla(\nabla\cdot \widetilde{\bv})+\omega^2\rho_e\widetilde{\bv}=\bf 0 &\mbox{in}\ \ B_r\backslash\overline{\Omega}\\
 \widetilde{\bv}=\bu^{in}&\mbox{on}\ \ \partial\Omega,\\
 \widetilde{\bv}={\bf 0}&\mbox{on}\ \ \partial B_r.
\end{array}
 \right.
\end{align}
By \cite[Theorem 4.10]{Mclean2000}, we know that $\widetilde{\bv}$ is unique and $\|\widetilde{\bv}\|_{H^1(B_r\backslash\overline{\Omega})^n}=O(\|\bu^{in}\|_{H^{1/2}(\partial\Omega)^n})$.

Next, we introduce a bounded operator
$$
\Lambda_0:H^{1/2}(\partial B_r)^n\longrightarrow H^{-1/2}(\partial B_r)^n
$$
which maps $\Phi$ to $\ct_{\bnu}(\widetilde{\bw})\bigg|_{\partial B_r}$
where $\widetilde{\bw}\in H^1_{loc}(\mathbb{R}^n\backslash\overline{B_r})^n$ is the unique solution of the following system:
\begin{align}\label{mode:AuxiCom2}
\left\{ \begin{array}{ll}
\mu_e \Delta \widetilde{\bw}+(\lambda_e+\mu_e) \nabla(\nabla\cdot \widetilde{\bw})+\omega^2\rho_e\widetilde{\bw}=\bf 0&\mbox{in}\quad\mathbb{R}^n\backslash\overline{B_r},\\[5pt]
\widetilde{\bw}=\Phi \in H^{1/2}(\partial B_r)^n&\mbox{on}\quad\partial B_r.
\end{array}
 \right.
\end{align}
The operator $\Lambda_0$ has the following properties
\begin{align}\label{ineq:DtN}
-\int_{\partial  B_r} \overline{\Phi}\Lambda_0\Phi\, ds(x)\geq0,\quad \Phi\in H^{1/2}(\partial B_r)^n,
\end{align}
and the difference $\Lambda-\Lambda_0$ is a compact operator from $H^{1/2}(\partial B_r)^n\rightarrow H^{-1/2}(\partial B_r)^n$.  It is proved in \cite{Charalambopoulos03} that these properties still hold for dyadic field by the similar analysis for the Laplace operator \cite{Cakoni2006,Hahenr2000}.
Hence, for any $\bvarphi\in H^1(B_r\backslash\overline{D})^n$, using the test function $\overline{\bvarphi}$ we can easily derive the variational formulation of  \eqref{mode:AuxiCom}: find $\bw\in  H^{1}(B_r\backslash\overline{D})^n$ such that
\begin{equation}\label{eq:variational formulation}
a_1(\bw, \bvarphi)+a_2(\bw, \bvarphi)=\cf(\bvarphi),
\end{equation}
where the bilinear forms $a_1$, $a_2$ and the linear functional $\cf(\cdot)$ are defined by
\begin{align}
a_1(\bw,\bvarphi)&:=\int_{\Omega\backslash\overline{D}}\,\,(\Ccal(\bx):\nabla\overline{\bvarphi}):\nabla\bw\,\rmd\bx+\int_{\Omega\backslash\overline{D}}\,\rho \,\omega^2\,\bw\cdot \overline{\bvarphi}\,\rmd\bx+\int_{B_r\backslash\overline{\Omega}}\,\,(\Ccal^e:\nabla\overline{\bvarphi}):\nabla\bw\,\rmd\bx\nonumber\\
&\quad+\int_{B_r\backslash\overline{\Omega}}\,\omega^2\,\rho_e\,\bw\cdot\overline{\bvarphi}\,\rmd\bx-\int_{\partial B_r}\,\Lambda_0\bw\cdot\overline{\bvarphi}\,\rmd s(\bx),\notag \\ 
a_2(\bw,\bvarphi) &:=-2\int_{\Omega\backslash\overline{D}}\,\rho\, \omega^2\,\bw\cdot \overline{\bvarphi}\,\rmd\bx -2\int_{B_r\backslash\overline{\Omega}} \omega^2\,\rho_e\,\bw\cdot \overline{\bvarphi}\,\rmd\bx\nonumber\\
&\quad\ -\int_{\partial B_r}(\Lambda-\Lambda_0)\bw\cdot \overline{\bvarphi}\,\rmd s(\bx),\notag \\
\cf(\bvarphi):=&-\int_{\partial D}\,\bp\cdot \overline{\bvarphi}\,\,\rmd s(\bx)+\int_{\partial \Omega}\,(\bh_2-\ct_{\bnu}(\widetilde{\bv}))\cdot \overline{\bvarphi}\,\,\rmd s(\bx)+\int_{\partial B_r}\ct_{\bnu}(\widetilde{\bv})\cdot \overline{\bvarphi}\,\,\rmd s(\bx)\nonumber\\
&-\int_{B_r\backslash\overline{D}}\bff\cdot\overline{\bvarphi}\,\rmd\bx. \notag
\end{align}

By using the assumptions  about $\rho(\bx)$ and $\Ccal(\bx)$ given in Subsection \ref{subsect:motivation}, Cauchy-Schwarz inequality and the definition of operator $\Lambda_0$, one can show the boundedness of the bilinear form $a_1$: for any $ \bphi,\,\bvarphi\in H^{1}(B_r\backslash\overline{D})^n$,
$$\big|a_1(\bphi,\,\bvarphi)\big|\leq C_1\,\|\bphi\|_{H^1(B_r\backslash\overline{D})^n}\|\bvarphi\|_{H^1(B_r\backslash\overline{D})^n}
$$
for some constant $C_1$.
Furthermore, by virtue of Poincar\'e's inequality and \eqref{ineq:DtN}, we have the coercivity property of the bilinear form $a_1$: for any $\bvarphi\in H^{1}(B_r\backslash\overline{D})^n$,
$$
a_1(\bvarphi,\,\bvarphi)\geq C_2\,\|\bvarphi\|^2_{H^1(B_r\backslash\overline{D})^n}
$$
for some constant $C_2$. According to Lax-Milgram lemma, there exists a bounded inverse operator $\cl:H^1(B_r\backslash\overline{D})^n\longrightarrow H^1(B_r\backslash\overline{D})^n$ such that
$$
a_1(\bw,\bvarphi)=\langle\cl\bw, \bvarphi\rangle,
$$
where $\langle\cdot,\cdot\rangle$ is the inner product in $H^1(B_r\backslash\overline{D})^n$, and the inverse of $\cl$ is also bounded. In view of the expression of the bilinear form $a_2$, we introduce two bounded operators $\ck_1$ and $\ck_2$ given by
\begin{align}
\langle\ck_1\bw, \bvarphi\rangle:=&\,2\, \int_{\Omega\backslash\overline{D}}\rho\, \omega^2\,\bw\cdot \overline{\bvarphi}\,\rmd\bx +2\int_{B_r\backslash\overline{\Omega}} \omega^2\,\rho_e\,\bw\cdot \overline{\bvarphi}\,\rmd\bx,\label{eq:A25}\\
\langle\ck_2\bw, \bvarphi\rangle:=&\int_{\partial B_r}(\Lambda-\Lambda_0)\bw^+\cdot \overline{\bvarphi}\,\,\rmd s(\bx).\notag
\end{align}

We claim that the operators $\ck_1$ and $\ck_2$ are both compact. In fact, let $\{\bw_n\}^{\infty}_{n=1}$ be a bounded sequence in $H^1(B_r\backslash\overline{D})^n$ and weakly converge to $\bw_*$ in the sense of $\|\cdot\|_{H^1(B_r\backslash\overline{D})^n}$ (denoted by $\bw_n\rightharpoonup\bw_*$). Since $\ci:H^1(B_r\backslash\overline{D})^n\longrightarrow L^2(B_r\backslash\overline{D})^n$  is a compact embedding operator,
we get
$$
\langle\ck_1(\bw_n-\bw_*),\bvarphi\rangle = 2\int_{\Omega\backslash\overline{D}}\rho\,\omega^2\,(\bw_n-\bw_*)\cdot\overline{\bvarphi} \,\rmd\bx+2\int_{B_r\backslash\overline{\Omega}} \omega^2\,\rho_e\,(\bw_n-\bw_*)\cdot \overline{\bvarphi}\,\rmd\bx
$$
and thus
\begin{align*}
\Big\| \ck_1(\bw_n-\bw_*) \Big\|^2_{H^1(B_r\backslash\overline{D})^n}&=\,\langle\ck_1(\bw_n-\bw_*),\,\ck_1(\bw_n-\bw_*)\rangle\\
&=2\int_{\Omega\backslash\overline{D}}\rho\,\omega^2\,(\bw_n-\bw_*)\cdot\overline{\ck_1(\bw_n-\bw_*)} \,\rmd\bx\\
&\quad+2\int_{B_r\backslash\overline{\Omega}} \rho_e\,\omega^2\,(\bw_n-\bw_*)\cdot \overline{\ck_1(\bw_n-\bw_*)}\,\rmd\bx\\
&\leq 2\,C\,\omega^2\,\max\big\{\|\rho(\bx)\|_{\bL^{\infty}(\Omega\backslash\overline{D})},\rho_e\big\}\,\big\| \bw_n-\bw_* \big\|^2_{L^2(B_r\backslash\overline{D})^n},
\end{align*}
which implies that $\ck_1$ is compact.
Similarly, we can verify the compactness of $\ck_2$. Since $\bw_n\rightharpoonup\bw_*$ in $H^1(B_r\backslash\overline{D})^n$, we have
$\bw_n\big|_{\partial B_r}\rightharpoonup\bw_*\big|_{\partial B_r}$ in $H^{1/2}(\partial B_r)^n$ by the trace operator. Together with the compactness of $\Lambda-\Lambda_0$, it is easy to obtain that
$$
(\Lambda-\Lambda_0)\bw_n\big|_{\partial B_r}\longrightarrow(\Lambda-\Lambda_0)\bw_*\big|_{\partial B_r}
$$
in $H^{-1/2}(\partial B_r)^n$. For any $\bvarphi\in H^1(B_r\backslash\overline{D})^n$, it holds that
$$
\langle\ck_2(\bw_n-\bw_*),\bvarphi\rangle=\int_{\partial B_r}(\Lambda-\Lambda_0)(\bw_n-\bw_*)\cdot \overline{\bvarphi}\,\,\rmd s(x).
$$
Therefore we have
\begin{align*}
\Big\| \ck_2(\bw_n-\bw_*) \Big\|^2_{H^1(B_r\backslash\overline{D})^n}&=\,\left\langle\ck_2(\bw_n-\bw_*),\ck_2(\bw_n-\bw_*)\right\rangle\\
&=\int_{\partial B_r}(\Lambda-\Lambda_0)(\bw_n-\bw_*)\cdot \overline{\ck_2(\bw_n-\bw_*)}\,\,\rmd s(x)\\
&\leq\|(\Lambda-\Lambda_0)(\bw_n-\bw_*)\|_{H^{-1/2}(\partial B_r)^n}\|\ck_2(\bw_n-\bw_*)\|_{H^{1/2}(\partial B_r)^n}\\
&\leq C\,\big\|(\Lambda-\Lambda_0)(\bw_n-\bw_*)\big\|_{H^{-1/2}(\partial B_r)^n}\,\big\| \bw_n-\bw_* \big\|_{L^2(B_r\backslash\overline{D})^n},
\end{align*}
which implies that $\ck_2$ is compact.

Since $\cl$ is bounded and $\ck_1+\ck_2$ is compact, we know  that $\cl-(\ck_1+\ck_2)$ is a Fredholm operator of index zero. According to the Fredholm alternative theorem, Riesz representation theory and the uniqueness of \eqref{mode:transmission}, we know there must exist a solution to \eqref{mode:transmission}. Since the inverse of $\cl-(\ck_1+\ck_2)$ is bounded, by applying the Lax-Milgram lemma to
$$\big\langle\left(\ct -\ck_1-\ck_2\right)\bw,\bvarphi \big\rangle=\cf(\bvarphi),
$$
we get
$$\|\bw\|_{H^1(B_R\backslash\overline{D})^n}\leq C\|\cf\|.
$$

On the other hand, it is straightforward to verify that
\begin{small}
$$\big|\cf(\bvarphi)\big|\leq C\,\Big(\|\bp\|_{H^{-1/2}(\partial D)^n}+\|\bh_2\|_{H^{-1/2}(\partial D)^n})+\|\bh_1\|_{H^{1/2}(\partial \Omega)^n}+\|\bff\|_{H^{-1/2}(B_r\backslash\overline{D})^n}\Big)\,\|\bvarphi\|_{H(B_r\backslash\overline{D})^n},
$$
\end{small}
which can directly imply the inequality \eqref{ineq:EstiTransmission}.
\end{proof}

\subsection{Auxiliary lemmas for Case 2}\label{subsec:case 2}
In this subsection, we shall establish several key lemmas  for Case 2 in Theorem \ref{thm:main1}. Considering that $D$ is a rigid obstacle, the unbounded and truncated scattering systems associated with Case 2 are given as follows:
find $(\bv,\,\bu^s)\in H^1(\Omega\backslash\overline{D})^n$ $\times H^1(\mathbb{R}^n\backslash\overline{\Omega})^n$ satisfying
\begin{align}\label{mode:transmission 2}
\left\{ \begin{array}{ll}
\mathcal{L}_{\mathcal{C}}\bv
+\omega^2\rho(\bx)\bv=\bf 0&\mbox{in}\ \  \Omega\backslash \overline{D},\quad \\[5pt]
\mathcal{L}_{\mathcal{C}^e}\bu^s
+\omega^2\rho_e\bu^s=\bff&\mbox{in}\ \ \mathbb{R}^n\backslash \overline{\Omega},\\[5pt]
\bu^s=\bu^{p,\,s}+\bu^{s,\,s}&\mbox{in}\ \ \mathbb{R}^n\backslash \overline{\Omega},\\[5pt]
\bu\big|_{\partial D}=\bp&\mbox{on}\ \ \partial{D},\\[5pt]
\bv=\bu^{s}+\bh_1,\quad \ct_{\bnu}(\bv)=\ct_{\bnu}(\bu^{s})+\bh_2&\mbox{on} \ \ \partial{\Omega},\\[5pt]
\bu^{\mathrm p,\,s}=-\frac{1}{k_{\mathrm p}^2} \nabla (\nabla \cdot \bu^{s} ),\quad \bu^{\mathrm s,\,s}= \frac{1}{k_{\mathrm s} ^2}  \nabla \times  ( \nabla \times   \bu^{s} ) & \mbox{in}\ \ \mathbb{R}^n\backslash\overline{\Omega}, \medskip\\
\lim_{|\bx| \to\infty}|\bx|^{(n-1)/2}\left(  \frac{\partial{\bu^{\mathrm  t,s}}}{\partial{|\bx|}}   -i\kappa_{t}\bu^{\mathrm  {t},s}  \right)=0,& {\mathrm t= t,s}\\[5pt]
\end{array}
 \right.
\end{align}
and  find $(\bv,\,\bu^s)\in  H^1(\Omega\backslash\overline{D})^n\times H^1(B_r\backslash\overline{\Omega})^n$ satisfying
\begin{align}\label{mode:transmisTrans 2}
\left\{ \begin{array}{ll}
\mathcal{L}_{\mathcal{C}}\bv
+\omega^2\rho(\bx)\bv=\bf 0&\mbox{in}\ \  \Omega\backslash \overline{D},\quad \\[5pt]
\mathcal{L}_{\mathcal{C}^e}\bu^s
+\omega^2\rho_e\bu^s=\bff&\mbox{in}\ \ \mathbb{R}^n\backslash \overline{\Omega},\\[5pt]
\bu\big|_{\partial D}=\bp&\mbox{on}\ \  \partial{D},\\[5pt]
\bv\big|_{\partial \Omega}=\bu^{s}\big|_{\partial \Omega}+\bh_1,\quad \ct_{\bnu}(\bv)=\ct_{\bnu}(\bu^{s})+\bh_2&\mbox{on}\ \  \partial{\Omega},\\[5pt]
\bu^{\mathrm p,\,s}=-\frac{1}{k_{\mathrm p}^2} \nabla (\nabla \cdot \bu^{s} ),\quad \bu^{\mathrm s,\,s}= \frac{1}{k_{\mathrm s} ^2}  \nabla \times  ( \nabla \times   \bu^{s} ) & \mbox{in}\ \ B_r\backslash\overline{\Omega}, \medskip\\
\ct_{\bnu}(\bu^s)=\Lambda \bu^s&\mbox{on}\ \ \partial{B_r},
\end{array}
 \right.
\end{align}
where $\bp\in H^{1/2}(\partial D)^n$, $\bh_1\in H^{1/2}(\partial \Omega)^n$, $\bh_2\in H^{-1/2}(\partial \Omega)^n$ and $\bff $ with $\supp(\bff)\subset B_{r_0}\backslash\overline{\Omega}\subset B_{r}\backslash\overline{\Omega}$. In fact, we can easily obtain the equivalence of \eqref{mode:transmission 2} and\eqref{mode:transmisTrans 2} by the similar argument of Lemma \ref{Pro:transmisTranss}.
In addition, similar to Lemma \ref{lem:unique proof}, we have the following result.
\begin{lem}\label{lem:unique proof 2}
Given $\bp\in H^{1/2}(\partial D)^n$, $\bh_1\in H^{1/2}(\partial \Omega)^n$, $\bh_2\in H^{-1/2}(\partial \Omega)^n$ and $\bff$ with $\supp(\bff)\subset B_{r_0}\backslash\overline{\Omega}$, there exists a unique solution $ (\bv,\,\bu^s)\in H^1(\Omega\backslash\overline{D})^n\times H^1(\mathbb{R}^3\backslash\overline{\Omega})^n$ to the system \eqref{mode:transmission} such that  the following estimate holds
\begin{align}\label{ineq:EstiTransmission}
\|\bv\|_{H^1(\Omega\backslash\overline{D})^n}+\|\bu^s\|_{H^1(\mathbb{R}^n\backslash\overline{\Omega})^n}&\leq C\bigg( \|\bp\|_{H^{1/2}(\partial D)^n }+ \|\bh_1\|_{H^{1/2}(\partial \Omega)^n} \nonumber\\
&\quad+\|\bh_2\|_{ H^{-1/2}(\partial \Omega)^n}  +\|\bff\|_{L^2(B_{r_0}\backslash\overline{\Omega})^n}                                    \bigg),
\end{align}
where $C$ is a positive constant.
\end{lem}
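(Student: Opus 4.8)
The plan is to mirror the two-step proof of Lemma~\ref{lem:unique proof}, modifying it only to account for the fact that, for a rigid obstacle, the condition on $\partial D$ is the essential (Dirichlet) condition $\bu|_{\partial D}=\bp$ rather than the natural traction condition.

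For uniqueness I would set $\bp=\bh_1=\bh_2={\bf 0}$ and $\bff={\bf 0}$, test the first two equations of \eqref{mode:transmission 2} with $\overline{\bv}$ and $\overline{\bu^s}$, and apply Betti's first formula over $\Omega\backslash\overline{D}$ and over $B_r\backslash\overline{\Omega}$. Now the boundary integral on $\partial D$ vanishes because $\bv|_{\partial D}={\bf 0}$, while the transmission conditions on $\partial\Omega$ cancel the interface contributions exactly as in Lemma~\ref{lem:unique proof}; taking imaginary parts leaves $\Im\int_{\partial B_r}\bnu\cdot[\Ccal^e:\nabla\bu^s]\cdot\overline{\bu^s}\,\rmd s\le 0$. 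Lemma~\ref{Rellich} then forces $\bu^s={\bf 0}$ in $|\bx|\ge r$, and unique continuation propagates this to $\bu^s={\bf 0}$ in $\mathbb{R}^n\backslash\overline{\Omega}$ and $\bv={\bf 0}$ in $\Omega\backslash\overline{D}$.

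For existence and the a priori bound, I would first observe that \eqref{mode:transmission 2} is equivalent to the truncated system \eqref{mode:transmisTrans 2} by the verbatim analogue of Lemma~\ref{Pro:transmisTranss}, so it suffices to solve \eqref{mode:transmisTrans 2} variationally. The one genuine change from the proof of Lemma~\ref{lem:unique proof} is the essential datum on $\partial D$: I would choose a bounded lifting $\bz\in H^1(B_r\backslash\overline{D})^n$ with $\bz|_{\partial D}=\bp$, $\bz$ supported away from $\partial\Omega\cup\partial B_r$, and $\|\bz\|_{H^1(B_r\backslash\overline{D})^n}\le C\|\bp\|_{H^{1/2}(\partial D)^n}$, and then seek the corrector $\bw_0:=\bw-\bz$ in the closed subspace $X:=\{\bvarphi\in H^1(B_r\backslash\overline{D})^n:\ \bvarphi|_{\partial D}={\bf 0}\}$. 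Testing against $\overline{\bvarphi}\in X$ gives, just as in \eqref{eq:variational formulation}, an identity $a_1(\bw_0,\bvarphi)+a_2(\bw_0,\bvarphi)=\widetilde{\cf}(\bvarphi)$ where $a_1,a_2$ have the same form as before (the $\partial D$ boundary terms drop out since $\bvarphi$ vanishes there) and $\widetilde{\cf}$ is the old functional $\cf$ augmented by the contributions generated by $\bz$. Because functions in $X$ vanish on all of $\partial D$, Poincar\'e's inequality remains valid on $B_r\backslash\overline{D}$, so $a_1$ is still bounded and coercive on $X$; Lax--Milgram produces a boundedly invertible $\cl:X\to X$ with $a_1(\bw_0,\bvarphi)=\langle\cl\bw_0,\bvarphi\rangle$, while $a_2$ is represented by compact operators $\ck_1,\ck_2$ via the same compact-embedding argument and the compactness of $\Lambda-\Lambda_0$. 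Hence $\cl-(\ck_1+\ck_2)$ is Fredholm of index zero on $X$, and combining the uniqueness just established with the Fredholm alternative yields a unique solution. Finally, $|\widetilde{\cf}(\bvarphi)|\le C\big(\|\bp\|_{H^{1/2}(\partial D)^n}+\|\bh_1\|_{H^{1/2}(\partial\Omega)^n}+\|\bh_2\|_{H^{-1/2}(\partial\Omega)^n}+\|\bff\|_{L^2(B_{r_0}\backslash\overline{\Omega})^n}\big)\|\bvarphi\|_{H^1(B_r\backslash\overline{D})^n}$, the $\|\bp\|$-term now stemming from the lifting $\bz$ and the $\|\bh_1\|$-term from the interface corrector $\widetilde{\bv}$ in \eqref{eq:DtN01}, and the bounded invertibility of $\cl-(\ck_1+\ck_2)$ converts this into \eqref{ineq:EstiTransmission}.

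The only real obstacle will be the bookkeeping around the essential condition on $\partial D$: picking the lifting $\bz$ so that it disturbs neither the transmission conditions on $\partial\Omega$ nor the DtN condition on $\partial B_r$, and checking that $a_1$ stays coercive once the problem is restricted to $X$. Everything else --- Betti's formula, Rellich's lemma, compactness of $\Lambda-\Lambda_0$, and the Fredholm/Lax--Milgram machinery --- transfers from the proof of Lemma~\ref{lem:unique proof} essentially unchanged.
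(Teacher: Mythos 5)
Your proof follows essentially the same route as the paper's: both lift the essential datum $\bp$ on $\partial D$ to a function in $H^1(B_r\backslash\overline{D})^n$, pose the variational problem for the corrector in the subspace $X$ of $H^1$ fields vanishing on $\partial D$, and reuse the Lax--Milgram/Fredholm machinery of Lemma~\ref{lem:unique proof} essentially verbatim to obtain existence, uniqueness, and the a priori bound. One detail worth flagging: you assert that Poincar\'e's inequality (and hence Korn's first inequality) holds on $X$ because its members vanish on the full boundary component $\partial D$ --- which is correct --- while the paper's proof explicitly claims the opposite and uses that claim to justify keeping the $L^2$-mass term inside $a_1$; your reading is the accurate one, and since retaining the mass term is harmless either way, both arguments successfully close the coercivity step.
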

\begin{proof}
The uniqueness can be easily proved. Thus,  we only need to verify that there exists a solution to \eqref{mode:transmisTrans 2}  such that the  estimate \eqref{ineq:EstiTransmission} holds. Without  loss of generality, we assume $\omega^2\,\rho_e$ is not a Dirichlet eigenvalue in $B_r\backslash\overline{\Omega}$. The PDE system \eqref{mode:transmisTrans 2} can be converted into the following system:
\begin{align}\label{mode:AuxiCom 2}
\left\{ \begin{array}{ll}
\mathcal{L}_{\mathcal{C}}\bw
+\omega^2\rho(\bx)\bw=\bff&\mbox{in} \ \ B_r\backslash \overline{D},\\[5pt]
\bw^s=\bw^{\mathrm p,\,s}+\bw^{\mathrm s,\,s}&\mbox{in}\ \  B_r\backslash \overline{\Omega},\\[5pt]
\bw\big|_{\partial D}=\bp&\mbox{on}\ \  \partial{D},\\[5pt]
\bw^-\big|_{\partial \Omega}=\bw^+\big|_{\partial \Omega}&\mbox{on}\ \ \partial \Omega,\\[5pt]
\ct_{\bnu}(\bw^-)=\ct_{\bnu}(\bw^+)+\ct_{\bnu}(\bu^{in})-\ct_{\bnu}(\widetilde{\bv})&\mbox{on}\ \  \partial \Omega,\\[5pt]
\bw^{\mathrm p,\,s}=-\frac{1}{k_{\mathrm p}^2} \nabla (\nabla \cdot \bu^{s} ),\quad \bw^{\mathrm s,\,s}= \frac{1}{k_{\mathrm s} ^2}  \nabla \times  ( \nabla \times   \bu^{s} ) & \mbox{in}\ \ B_r\backslash\overline{\Omega}, \medskip\\
\ct_{\bnu}(\bw)=\Lambda\bw+\ct_{\bnu}(\widetilde{\bv})&\mbox{on}\ \ \partial B_r,\\[5pt]
\end{array}
 \right.
\end{align}
where $\bw(\bx)=\bv(\bx)$ in $\Omega\backslash\overline{D}$ and $\bw(\bx)=\bu^s(\bx)+\widetilde{\bv}(\bx)$ in $B_r\backslash\overline{\Omega}$, $\widetilde{\bv}$ is a solution to
$$
\begin{cases}
	\mathcal{L}_{\mathcal{C}^e}  \bw
+\omega^2\rho_e  \bw=\bff \mbox{ in } B_r\backslash \overline{\Omega}, \\
\widetilde{\bv}=\bu^{in} \hspace{1.9cm} \mbox{ on } \partial\Omega,\\
\widetilde{\bv}={\bf 0} \hspace{2.2cm} \mbox{ on } \partial B_r.
\end{cases}
$$
By \cite[Theorem 4.10]{Mclean2000}, it is obvious to see that $\widetilde{\bv}$ is unique and
$$
\|\widetilde{\bv}\|_{H^1(B_r\backslash\overline{\Omega})^n}=O(\|\bu^{in}\|_{H^{1/2}(\partial\Omega)^n}).
$$ Similar to the proof of Lemma \ref{lem:unique proof 2}, we also  use the bounded operator $\Lambda_0$ and its corresponding properties. Here, we introduce a new Sobolve space
$$
X:=\{\bw\in H^{1}(B_r\backslash
\overline{D})^n;\bw={\bf 0}\ \ \mbox{on}\ \partial D\}
$$
and  let $\bw_0\in H^1(B_r\backslash\overline{D})^n$ be such that $\bw_0=\bp$ on $\partial D$ and $\|\bw_0\|_{H^1(B_r\backslash\overline{D})^n}\leq C\|\bq\|_{H^{1/2}(\partial D)^n}$.
Then for any $\bvarphi\in X$, using the test function $\overline{\bvarphi}$ we can easily derive the variational formulation of \eqref{mode:AuxiCom 2}: find $\bw\in  H^{1}(B_r\backslash\overline{D})^n$ such that
\begin{equation}\label{eq:variational formulation}
a_1(\bw-\bw_0, \bvarphi)+a_2(\bw-\bw_0, \bvarphi)=\cf(\bvarphi),
\end{equation}
where the bilinear forms $a_1$, $a_2$ and the linear functional $\cf(\cdot)$ are defined by
\begin{eqnarray*}
a_1(\bw-\bw_0,\bvarphi)&:=&\int_{\Omega\backslash\overline{D}}\,\,(\Ccal(\bx):\nabla\overline{\bvarphi}):\nabla(\bw-\bw_0)\,\rmd x+\int_{\Omega\backslash\overline{D}}\,\rho \,\omega^2\,(\bw-\bw_0)\cdot \overline{\bvarphi}\,\rmd x\nonumber\\
&&+\int_{B_r\backslash\overline{\Omega}}\,\,(\Ccal^e:\nabla\overline{\bvarphi}):\nabla(\bw-\bw_0)\,\rmd x
+\int_{B_r\backslash\overline{\Omega}}\,\omega^2\,\rho_e\,(\bw-\bw_0)\cdot\overline{\bvarphi}\,\rmd x\nonumber\\
&&-\int_{\partial B_r}\,\Lambda_0(\bw-\bw_0)\cdot\overline{\bvarphi}\,\rmd s(x),\label{eq:a1}\\
a_2(\bw-\bw_0,\bvarphi)&:=&-2\int_{\Omega\backslash\overline{D}}\,\rho\, \omega^2\,(\bw-\bw_0)\cdot \overline{\bvarphi}\,\rmd x -2\int_{B_r\backslash\overline{\Omega}} \omega^2\,\rho_e\,(\bw-\bw_0)\cdot \overline{\bvarphi}\,\rmd x\nonumber\\
&&-\int_{\partial B_r}(\Lambda-\Lambda_0)(\bw-\bw_0)\cdot \overline{\bvarphi}\,\rmd s(x),\\
\cf(\bvarphi)&:=&\int_{\partial \Omega}\,(\bh_2-\ct_{\bnu}(\widetilde{\bv}))\cdot \overline{\bvarphi}\,\,ds(x)+\int_{\partial B_r}\ct_{\bnu}(\widetilde{\bv})\cdot \overline{\bvarphi}\,\,\rmd s(x)-\int_{B_r\backslash\overline{D}}\bff\cdot\overline{\bvarphi}\,\rmd x.
\end{eqnarray*}

Since $\Lambda$ is a bounded operator from $H^{1/2}(\partial B_r)$ to $H^{1/2}(\partial B_r)$, $\cf$ is a bounded conjugate linear functional on $X$ and both $a_1(\cdot,\cdot)$ and $a_2(\cdot,\cdot)$ are continuous on $X\times X$: for any $ \bphi,\,\bvarphi\in X$,
$$\big|a_1(\bphi,\,\bvarphi)\big|\leq C\,\|\bphi\|_{H^1(B_r\backslash\overline{D})^n}\|\bvarphi\|_{H^1(B_r\backslash\overline{D})^n}
$$
for some constant $C$.

From the properties of $\Lambda_0$ and \eqref{eq:ellip1}, we see that for any $\bvarphi\in X$,
$$
a_1(\bvarphi,\bvarphi)\geq C\|\bvarphi\|^2_{H^1(B_r\backslash\overline{D})^n}
$$
with some constant $C$.
Therefore, by Lax-Milgram lemma, there exists a bounded inverse operator $\cl:X\longrightarrow X$ such that
$$
a_1(\bw-\bw_0,\bvarphi)=\langle\cl(\bw-\bw_0), \bvarphi\rangle,
$$
where $\langle\cdot,\cdot\rangle$ is the inner product in $H^1(B_r\backslash\overline{D})^n$, and the inverse of $\cl$ is also bounded. Note that including a $L^2$-inner product term in $a_1(\cdot,\cdot)$ is important since the Poincar\'{e} inequality does not hold in $X$ any longer. From the expression of the bilinear form $a_2$, we introduce two bounded operators $\ck_1$ and $\ck_2$ given by
\begin{align}\label{eq:A25}
\langle\ck_1(\bw-\bw_0), \bvarphi\rangle:=&2\int_{\Omega\backslash\overline{D}}\rho\, \omega^2\,(\bw-\bw_0)\cdot \overline{\bvarphi}\,\rmd x +2\int_{B_r\backslash\overline{\Omega}} \omega^2\,\rho_e\,(\bw-\bw_0)\cdot \overline{\bvarphi}\,\rmd x,\\
\langle\ck_2(\bw-\bw_0), \bvarphi\rangle:=&\int_{\partial B_r}(\Lambda-\Lambda_0)(\bw-\bw_0)\cdot \overline{\bvarphi}\,\,\rmd s(x).\label{eq:A25-2}
\end{align}
By the similar argument as in Lemma \ref{lem:unique proof}, we can verify that the operators $\ck_1$ and $\ck_2$ defined by \eqref{eq:A25} and \eqref{eq:A25-2} are also compact. Similarly, we also have
$$\big\langle\left(\ct -\ck_1-\ck_2\right)(\bw-\bw_0),\bvarphi \big\rangle=\cf(\bvarphi),
$$
By Lax-Milgram lemma, we see that
$$\|(\bw-\bw_0)\|_{H^1(B_R\backslash\overline{D})^n}\leq C\|\cf\|.
$$
On the other hand,
$$\big|\cf(\bvarphi)\big|\leq C\,\Big(\|\bp\|_{H^{1/2}(\partial D)^n}+\|\bh_2\|_{H^{-1/2}(\partial D)^n})+\|\bh_1\|_{H^{1/2}(\partial \Omega)^n}+\|\bff\|_{H^{-1/2}(B_r\backslash\overline{D})^n}\Big)\,\|\bvarphi\|_{H(B_r\backslash\overline{D})^n},
$$
which can directly imply the inequality
\begin{align*}
\|\bv\|_{H^1(\Omega\backslash\overline{D})^n}+\|\bu^s\|_{H^1(\mathbb{R}^n\backslash\overline{\Omega})^n}\leq& C\Big\{ \|\bw_0\|_{H^1(\mathbb{R}^n\backslash\overline{D})^n}+ \|\bp\|_{H^{1/2}(\partial D)^n}+\|\bh_2\|_{H^{-1/2}(\partial D)^n}\\
&\quad\quad\quad\quad\quad\quad\quad
 +\|\bh_1\|_{H^{1/2}(\partial \Omega)^n}+\|\bff\|_{\bH^{-1/2}(B_r\backslash\overline{D})^n}  \Big\}\\
 &\leq \widetilde{C}\Big( \|\bp\|_{H^{1/2}(\partial D)^n }+ \|\bh_1\|_{H^{1/2}(\partial \Omega)^n}
+\|\bh_2\|_{ H^{-1/2}(\partial \Omega)^n}\nonumber\\  &\hspace{6.05cm}+\|\bff\|_{L^2(B_{r_0}\backslash\overline{\Omega})^n}                                    \Big).
\end{align*}
The proof is complete.
\end{proof}

\subsection{The well-posedness of the scattering problem  \eqref{eq:scattering1}}\label{well-posedness} In this subsection, we can adopt a similar variational technique used in Subsections \ref{subsect:auxiliary} and  \ref{subsec:case 2} to verify the well-posedness of the scattering problem \eqref{eq:scattering1}.
\begin{prop}\label{pro:Well-posed}
There exits a unique solution $\bu\in H^1(\mathbb{R}^n\backslash\overline{D})^n$ to the scattering problem \eqref{eq:scattering1}. Furthermore, it holds that
\begin{align}\label{eq:230 bound}
\|\bu\|_{H^1(\mathbb{R}^n\backslash\overline{D})^n}&\leq C\bigg( \|\bu^{in}\|_{H^{1/2}(\partial \Omega)^n} +\|\ct_{\nu}(\bu^{in})\|_{ H^{-1/2}(\partial \Omega)^n}  +\|\bff\|_{L^2(B_{r_0}\backslash\overline{\Omega})^n}                                    \bigg),
\end{align}
where $C$ is a positive constant, $\Omega \Subset B_{r_0} $ and $B_{r_0}$ is a ball centered at the origin with the radius $r_0 \in \mathbb R_+ $.
\end{prop}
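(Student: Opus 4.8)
The plan is to observe that the scattering problem \eqref{eq:scattering1} is nothing but a particular instance of the model transmission problems already analysed, and to read off its well-posedness from Lemma~\ref{lem:unique proof} and Lemma~\ref{lem:unique proof 2}. First I would reduce \eqref{eq:scattering1} to these model problems. Since $\supp(\bff)\subset B_{r_0}\backslash\overline{\Omega}$ and $(\Ccal,\rho)=(\Ccal^e,\rho_e)$ in $\mathbb{R}^n\backslash\overline{\Omega}$ with $\bu^{in}$ solving the homogeneous system \eqref{eq:incident1} there, set $\bv:=\bu\big|_{\Omega\backslash\overline{D}}$ and keep $\bu^s:=\bu-\bu^{in}$ in $\mathbb{R}^n\backslash\overline{\Omega}$. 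A direct substitution shows that $(\bv,\bu^s)$ solves \eqref{mode:transmission} when $D$ is traction-free (i.e. $\mathcal{B}=\ct_{\bnu}$) and \eqref{mode:transmission 2} when $D$ is rigid (i.e. $\mathcal{B}=\cdot\big|_{\partial D}$), in both cases with the data
\[
\bp=\mathbf{0},\qquad \bh_1=\bu^{in}\big|_{\partial\Omega}\in H^{1/2}(\partial\Omega)^n,\qquad \bh_2=\ct_{\bnu}(\bu^{in})\big|_{\partial\Omega}\in H^{-1/2}(\partial\Omega)^n,
\]
the last membership being guaranteed by Corollary~\ref{cor:conormal} applied to $\bu^{in}$, which solves $\mathcal{L}_{\mathcal{C}^e}\bu^{in}=-\omega^2\rho_e\bu^{in}\in L^2(\Omega)^n$, so that $\|\bh_2\|_{H^{-1/2}(\partial\Omega)^n}\leq\eta\|\bu^{in}\|_{H^1(\Omega)^n}$. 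Conversely, given a solution $(\bv,\bu^s)$ of the corresponding model problem with these data, the field $\bu:=\bv$ on $\Omega\backslash\overline{D}$, $\bu:=\bu^s+\bu^{in}$ on $\mathbb{R}^n\backslash\overline{\Omega}$ solves \eqref{eq:scattering1}: the Dirichlet and conormal interface conditions on $\partial\Omega$ force $\bu\in H^1_{loc}(\mathbb{R}^n\backslash\overline{D})^n$ together with the transmission equation across $\partial\Omega$, the obstacle condition on $\partial D$ holds since $\bp=\mathbf{0}$, and $\bu^{\mathrm t,s}$ inherits the Kupradze radiation condition. Hence \eqref{eq:scattering1} and the model problems are equivalent.

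With the equivalence in hand, I would invoke Lemma~\ref{lem:unique proof} (traction-free case) or Lemma~\ref{lem:unique proof 2} (rigid case): each gives a unique $(\bv,\bu^s)$, hence a unique $\bu$, subject to the a priori bound \eqref{ineq:EstiTransmission}, which here reads
\[
\|\bv\|_{H^1(\Omega\backslash\overline{D})^n}+\|\bu^s\|_{H^1(\mathbb{R}^n\backslash\overline{\Omega})^n}\leq C\big(\|\bh_1\|_{H^{1/2}(\partial\Omega)^n}+\|\bh_2\|_{H^{-1/2}(\partial\Omega)^n}+\|\bff\|_{L^2(B_{r_0}\backslash\overline{\Omega})^n}\big).
\]
To deduce \eqref{eq:230 bound}, I would use the triangle inequality $\|\bu\|_{H^1(\mathbb{R}^n\backslash\overline{D})^n}\leq\|\bv\|_{H^1(\Omega\backslash\overline{D})^n}+\|\bu^s\|_{H^1(\mathbb{R}^n\backslash\overline{\Omega})^n}+\|\bu^{in}\|_{H^1(\mathbb{R}^n\backslash\overline{\Omega})^n}$ (all exterior norms understood over bounded subregions in the customary $H^1_{loc}$ sense for radiating fields), substitute $\|\bh_1\|_{H^{1/2}(\partial\Omega)^n}=\|\bu^{in}\|_{H^{1/2}(\partial\Omega)^n}$ and $\|\bh_2\|_{H^{-1/2}(\partial\Omega)^n}=\|\ct_{\bnu}(\bu^{in})\|_{H^{-1/2}(\partial\Omega)^n}$, and absorb the remaining $\bu^{in}$ contribution near $\overline{\Omega}$ by the same boundary quantities. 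Collecting terms yields \eqref{eq:230 bound}.

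I expect no genuinely new analytic difficulty here: the Fredholm-alternative argument, the construction and mapping properties of the DtN map $\Lambda$, the coercivity of the bilinear form $a_1$, and the compactness of $\ck_1,\ck_2$ were all carried out in Subsections~\ref{subsect:auxiliary} and \ref{subsec:case 2}, and uniqueness ultimately rests on the Rellich-type Lemma~\ref{Rellich} together with unique continuation. The only points needing care are the faithful bookkeeping of the interface data when identifying \eqref{eq:scattering1} with the model problems, checking that $\bp=\mathbf{0}$ lies in the correct trace space in each case ($H^{-1/2}(\partial D)^n$ for traction-free, $H^{1/2}(\partial D)^n$ for rigid), and the mild interpretational point that $\bu$ is only $H^1_{loc}$ in the exterior because the incident field $\bu^{in}$ does not obey the radiation condition — none of which constitutes a real obstacle.
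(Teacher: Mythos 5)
Your reduction of \eqref{eq:scattering1} to the model transmission problems \eqref{mode:transmission} and \eqref{mode:transmission 2} with $\bp=\mathbf{0}$, $\bh_1=\bu^{in}\big|_{\partial\Omega}$, $\bh_2=\ct_{\bnu}(\bu^{in})\big|_{\partial\Omega}$, followed by an appeal to Lemma~\ref{lem:unique proof} (resp.\ Lemma~\ref{lem:unique proof 2}), is essentially the same route the paper takes; the paper's own proof simply truncates to the analogue \eqref{mode:transmisTrans 0}, sets $\mathcal{B}(\bu)=\mathbf 0$, $\bh_1=\bu^{in}$, $\bh_2=\ct_{\bnu}(\bu^{in})$, and invokes the variational argument developed for those lemmas.

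One step worth reconsidering is how you extract \eqref{eq:230 bound}. You write $\bu$ as $\bv$ inside $\Omega\setminus\overline D$ and as $\bu^{s}+\bu^{in}$ outside, apply the triangle inequality, and then claim the extra term $\|\bu^{in}\|_{H^1(\mathbb{R}^n\setminus\overline\Omega)^n}$ can be ``absorbed'' into $\|\bu^{in}\|_{H^{1/2}(\partial\Omega)^n}+\|\ct_{\bnu}(\bu^{in})\|_{H^{-1/2}(\partial\Omega)^n}$. That absorption is not legitimate: controlling the $H^1$-norm of a Lam\'e solution on an exterior region by its Cauchy data on the inner boundary alone is a Cauchy-problem (quantitative unique continuation) estimate, which is ill-posed in general, and moreover $\bu^{in}$ is typically a plane wave so that $\|\bu^{in}\|_{H^1(\mathbb{R}^n\setminus\overline\Omega)^n}$ is not even finite. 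The escape is that \eqref{eq:230 bound} should be read, consistently with the rest of the paper, as an estimate for $\|\bv\|_{H^1(\Omega\setminus\overline D)^n}+\|\bu^{s}\|_{H^1(\mathbb{R}^n\setminus\overline\Omega)^n}$ --- precisely what Lemmas~\ref{lem:unique proof} and \ref{lem:unique proof 2} deliver once you substitute $\bh_1,\bh_2$. Drop the triangle-inequality step and read off the bound directly; then the argument is complete and matches the paper's.
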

\begin{proof}
As discussed in Subsection \ref{subsect:summary of results}, by using an appropriate truncation we can truncate the unbounded domain $\mathbb{R}^n\backslash\overline{D}$ in \eqref{eq:scattering1} into a bounded one. Indeed,  \eqref{eq:scattering1} can be transformed to  the following PDE system:
Find $\bu\in  H^1(B_r\backslash\overline{D})^n$ such that
\begin{align}\label{mode:transmisTrans 0}
\left\{ \begin{array}{ll}
\mathcal{L}_{\mathcal{C}}\bu
+\omega^2\rho(\bx)\bu=\bf 0&\mbox{in}\ \  \Omega\backslash \overline{D},\quad \\[5pt]
\mathcal{L}_{\mathcal{C}^e}\bu^s
+\omega^2\rho_e\bu^s=\bff&\mbox{in}\ \ \mathbb{R}^n\backslash \overline{\Omega},\\[5pt]
\bu^s=\bu^{\mathrm p,\,s}+\bu^{\mathrm s,\,s}&\mbox{in}\ \  B_r\backslash \overline{\Omega},\\[5pt]
\mathcal{B}(\mathbf{u})=\bf 0&\mbox{on}\ \  \partial{D},\\[5pt]
\bu\big|_{\partial \Omega}=\bu^{s}\big|_{\partial \Omega}+\bu^{in},\quad \ct_{\bnu}(\bu)=\ct_{\bnu}(\bu^{s})+\ct_{\bu^{in}}&\mbox{on}\ \  \partial{\Omega},\\[5pt]
\bu^{\mathrm p,\,s}=-\frac{1}{k_{\mathrm p}^2} \nabla (\nabla \cdot \bu^{s} ),\quad \bu^{\mathrm s,\,s}= \frac{1}{k_{\mathrm s} ^2}  \nabla \times  ( \nabla \times   \bu^{s} ) & \mbox{in}\ \ B_r\backslash\overline{\Omega}, \medskip\\
\ct_{\bnu}(\bu^s)=\Lambda \bu^s&\mbox{on}\ \ \partial{B_r}.
\end{array}
 \right.
\end{align}
In fact, we can use a completely similar argument of Lemma \ref{Pro:transmisTranss} to verify the equivalence of \eqref{mode:transmisTrans 0} and \eqref{eq:scattering1}, so we only need to illustrate that there exists a unique solution to \eqref{eq:scattering1} and it is relied on the input data $\bu^{in}$ and $\bff$. Here we replace the product space $ H^1(\Omega\backslash\overline{D})^n\times H^1(\mathbb{R}^n\backslash\overline{\Omega})^n$ in Lemma
\ref{lem:unique proof} (or $X\times X$ in Lemma \ref{lem:unique proof 2}) with $\mathbb{R}^n\backslash\overline{D}$  and take $\mathcal{B}(\mathbf{u}) =\bf 0$ on $\partial D$, $\bh_1= \bu^{in}\in H^{1/2}(\partial \Omega)^n$, $\bh_2=\ct_{\nu}(\bu^{in})\in H^{-1/2}(\partial \Omega)^n$. By using  a similar proof of Lemma \ref{lem:unique proof} (or Lemma \ref{lem:unique proof 2}), we can easily obtain the uniqueness of solution to \eqref{eq:scattering1} and derive \eqref{eq:230 bound}.
The proof is complete.
\end{proof}

\section{Proof of Theorem \ref{thm:main1} for Case 1}\label{traction-free obstacle}
In this section, we mainly consider that the traction-free obstacle  $D$ has an $\varepsilon^{1/2}$-realization $(D; \mathcal{C}^0, \rho_0)$ in the sense of Definition~\ref{def:2}, where $\mathcal{C}^0$ is given in the form \eqref{eq:lame2} and $\lambda$, $\mu$, $\rho_0$ satisfy the conditions \eqref{eq:eff2}. Considering an elastic medium $(\Omega; \widetilde{\mathcal{C}}, \widetilde{\rho})$ with $(\widetilde{\mathcal{C}}, \widetilde{\rho})\big|_{\Omega\backslash\overline{D}}=(\mathcal{C}, \rho)\big|_{\Omega\backslash\overline{D}}$ and $(\widetilde{\mathcal{C}}, \widetilde{\rho})\big|_{D}=(\mathcal{C}^0, \rho_0)\big|_{D}$, let $(\mathcal{\widetilde{C}}, \widetilde{\rho})$ be extended into $\mathbb{R}^n\backslash\overline{\Omega}$ such that $(\mathcal{\widetilde{C}}, \widetilde{\rho})=(\mathcal{C}^e, \rho_e)$ in $\mathbb{R}^n\backslash\overline{\Omega}$. Then the medium scattering system described above is given as follows:
\begin{equation}\label{eq:scattering+medium}
\begin{cases}
\mathcal{L}_{\mathcal{\widetilde{C}}}\,\mathbf{\widetilde{u}}+\omega^2\widetilde{\rho}\,\mathbf{\widetilde{u}}={\bff} & \mbox{in}\ \ \mathbb{R}^n,\medskip\\
\mathbf{\widetilde{u}}=\mathbf{u}^{in}+\mathbf{\widetilde{u}}^s & \mbox{in}\ \ \mathbb{R}^n\backslash\overline{\Omega},\medskip\\
\widetilde{\bu}^{-}\big|_{\partial D}=\widetilde{\bu}^{+}\big|_{\partial D},\quad \ct_{\bnu}(\widetilde{\bu}^{-})=\ct_{\bnu}(\widetilde{\bu}^{+})& \mbox{on}\ \ \partial D,\medskip\\
\widetilde{\bu}\big|_{\partial \Omega}=\widetilde{\bu}^{s}\big|_{\partial \Omega}+\bu^{in},\quad \ct_{\bnu}(\widetilde{\bu})=\ct_{\bnu}(\widetilde{\bu}^{s})+\ct_{\bnu}(\bu^{in})&\mbox{on}\ \  \partial{\Omega},\\
\widetilde{\bu}^{\mathrm p,\,s}=-\frac{1}{k_{\mathrm p}^2} \nabla (\nabla \cdot \widetilde{\bu}^{s} ),\quad \widetilde{\bu}^{\mathrm s,\,s}= \frac{1}{k_{\mathrm s} ^2}  \nabla \times  ( \nabla \times   \widetilde{\bu}^{s} ) & \mbox{in}\ \ \mathbb{R}^n\backslash\overline{\Omega}, \medskip\\
{\lim_{|\bx| \to\infty}|\bx|^{(n-1)/2}\big(  \frac{\partial{\widetilde{\bu}^{\mathrm  t,s}}}{\partial{|\bx|}}   -\imath\kappa_{\mathrm  t}\widetilde{\bu}^{\mathrm  t,s}  \big)=\bf 0,} & {\mathrm  t=\mathrm  p,\mathrm s,} \
\end{cases}
\end{equation}
where $\widetilde{\bu}^{-}$ and $\widetilde{\bu}^{+}$ stand for the limits from outside and inside $\partial D$, respectively.
In the following lemma, we first derive the unique solution $\widetilde{\bu}$ of \eqref{eq:scattering+medium}  in regions $B_r\backslash\overline{D}$ and $D$ can be estimated well by $\bu^{in}$ and $\bff$, which plays an important role in the subsequent proof.

\begin{lem}\label{le:prioriestimati}
Let $\widetilde{\bu}$ be the unique solution of \eqref{eq:scattering+medium}. Then there exist positive constants $r_0, \,C_1$ and $C_2$ such that the following estimates hold for all $\varepsilon\ll 1$ and $r\geq r_0$:
\begin{align}
\left\|\widetilde{\bu}\right\|_{H^1(B_r\backslash\overline{D})^n}&\leq C_1\left( \|\bu^{in}\|_{H^1(B_r\backslash\overline{\Omega})^n}  + \|\bff\|_{L^2(B_{r_0}\backslash\overline{\Omega})^n}\right),\label{ineq:Estioutside}\\
\sqrt{\varepsilon}\left\|\widetilde{\bu}\right\|_{H^1(D)^n}&\leq C_2\left( \|\bu^{in}\|_{H^1(B_r\backslash\overline{\Omega})^n}  + \|\bff\|_{L^2(B_{r_0}\backslash\overline{\Omega})^n}\right).\label{ineq:EstiinsideD}
\end{align}
\end{lem}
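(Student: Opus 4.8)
The plan is to derive the two estimates from Betti-type energy identities — one over the truncated problem on $B_r$ and one over the obstacle $D$ itself — together with the conormal derivative bound of Corollary~\ref{cor:conormal}, and then to make the estimate over $\Omega\backslash\overline{D}$ \emph{uniform in $\varepsilon$} by a compactness argument. As a first step I would replace \eqref{eq:scattering+medium} by its truncation on $B_r$ (exactly as in Lemma~\ref{Pro:transmisTranss}), so that the existence, uniqueness and a fixed-$\varepsilon$ a priori bound for $\widetilde{\bu}$ are supplied by the same variational scheme as in Lemma~\ref{lem:unique proof} (now $D$ carries a lossy medium instead of being an obstacle, and uniqueness follows from Lemma~\ref{Rellich} since $\Im\widetilde\rho\ge 0$). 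The only feature worth highlighting at this stage is that on $D$ the stiffness tensor is $\Ccal^0=\varepsilon\,\Ccal_0$, with $\Ccal_0$ the fixed isotropic tensor of Lamé moduli $(\lambda_0,\mu_0)$, while $\Im\rho_0=\tau_0>0$ is of order one — so the damping inside $D$ is not small even though the bulk moduli are.

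Next I would prove the interior bound \eqref{ineq:EstiinsideD} relative to the exterior norm. Testing $\mathcal{L}_{\Ccal^0}\widetilde{\bu}+\omega^2\rho_0\widetilde{\bu}={\bf 0}$ in $D$ against $\overline{\widetilde{\bu}}$ and using Betti's first formula gives $\int_D(\Ccal^0:\nabla\overline{\widetilde{\bu}}):\nabla\widetilde{\bu}\,\rmd\bx-\omega^2\rho_0\|\widetilde{\bu}\|_{L^2(D)^n}^2=(\ct_{\bnu}(\widetilde{\bu}),\widetilde{\bu})_{\partial D}$. Its imaginary part yields $\omega^2\tau_0\|\widetilde{\bu}\|_{L^2(D)^n}^2\le\|\ct_{\bnu}(\widetilde{\bu})\|_{H^{-1/2}(\partial D)^n}\|\widetilde{\bu}\|_{H^{1/2}(\partial D)^n}$, and its real part, using the strong convexity \eqref{eq:convexity1} of $\Ccal_0$ and Korn's inequality, yields $\varepsilon\,c_0\|\nabla\widetilde{\bu}\|_{L^2(D)^n}^2\le C\|\widetilde{\bu}\|_{L^2(D)^n}^2+\|\ct_{\bnu}(\widetilde{\bu})\|_{H^{-1/2}(\partial D)^n}\|\widetilde{\bu}\|_{H^{1/2}(\partial D)^n}$ for some $c_0>0$. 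By the transmission condition on $\partial D$ the traction $\ct_{\bnu}(\widetilde{\bu})|_{\partial D}$ coincides with the conormal derivative of $\widetilde{\bu}|_{\Omega\backslash\overline{D}}$ taken with respect to the \emph{fixed} tensor $\Ccal$, so Corollary~\ref{cor:conormal} bounds both $\|\ct_{\bnu}(\widetilde{\bu})\|_{H^{-1/2}(\partial D)^n}$ and $\|\widetilde{\bu}\|_{H^{1/2}(\partial D)^n}$ by $\|\widetilde{\bu}\|_{H^1(\Omega\backslash\overline{D})^n}$ with constants independent of $\varepsilon$. Feeding this back in gives $\|\widetilde{\bu}\|_{L^2(D)^n}^2\le C\|\widetilde{\bu}\|_{H^1(\Omega\backslash\overline{D})^n}^2$ and then $\varepsilon\|\nabla\widetilde{\bu}\|_{L^2(D)^n}^2\le C\|\widetilde{\bu}\|_{H^1(\Omega\backslash\overline{D})^n}^2$, i.e.\ $\sqrt{\varepsilon}\,\|\widetilde{\bu}\|_{H^1(D)^n}\le C\|\widetilde{\bu}\|_{H^1(\Omega\backslash\overline{D})^n}$, so \eqref{ineq:EstiinsideD} will follow from \eqref{ineq:Estioutside}. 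At the same point I would record, applying Corollary~\ref{cor:conormal} on $D$ with the tensor $\Ccal^0=\varepsilon\,\Ccal_0$ and source $-\omega^2\rho_0\widetilde{\bu}\in L^2(D)^n$, the complementary bound $\|\ct_{\bnu}(\widetilde{\bu})\|_{H^{-1/2}(\partial D)^n}\le C\big(\varepsilon\|\nabla\widetilde{\bu}\|_{L^2(D)^n}+\|\widetilde{\bu}\|_{L^2(D)^n}\big)$, whose crucial feature is that $\varepsilon$ multiplies only the gradient term.

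It then remains to prove \eqref{ineq:Estioutside} with a constant \emph{independent of $\varepsilon$}, which I would establish by contradiction. If it fails there exist $\varepsilon_k\downarrow 0$ and data with $\|\bu^{in}_k\|_{H^1(B_r\backslash\overline{\Omega})^n}+\|\bff_k\|_{L^2(B_{r_0}\backslash\overline{\Omega})^n}=1$ but $M_k:=\|\widetilde{\bu}_k\|_{H^1(B_r\backslash\overline{D})^n}\to\infty$; setting $\widetilde{\bv}_k=\widetilde{\bu}_k/M_k$ one has $\|\widetilde{\bv}_k\|_{H^1(B_r\backslash\overline{D})^n}=1$ while $\widetilde{\bv}_k$ solves the same system with data tending to $0$. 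Taking the imaginary part of the global Betti identity over the truncated domain and using $\Im\int_{\partial B_r}\ct_{\bnu}(\widetilde{\bv}_k^{s})\cdot\overline{\widetilde{\bv}_k^{s}}\,\rmd s\ge 0$ from Lemma~\ref{Rellich} (nonnegativity of the radiated energy flux), the three nonnegative terms $\omega^2\tau_0\|\widetilde{\bv}_k\|_{L^2(D)^n}^2$, $\omega^2\int_{\Omega\backslash\overline{D}}\Im\rho\,|\widetilde{\bv}_k|^2\,\rmd\bx$ and $\Im\int_{\partial B_r}\ct_{\bnu}(\widetilde{\bv}_k^{s})\cdot\overline{\widetilde{\bv}_k^{s}}\,\rmd s$ all tend to $0$; in particular $\|\widetilde{\bv}_k\|_{L^2(D)^n}\to 0$ (this is where $\tau_0>0$ enters). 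The interior identity above then forces $\varepsilon_k\|\nabla\widetilde{\bv}_k\|_{L^2(D)^n}^2\to 0$, whence the complementary bound gives $\|\ct_{\bnu}(\widetilde{\bv}_k)\|_{H^{-1/2}(\partial D)^n}\to 0$. Passing to a subsequence with $\widetilde{\bv}_k\rightharpoonup\widetilde{\bv}$ in $H^1(B_r\backslash\overline{D})^n$ — hence strongly in $L^2$ and on $\partial D,\partial\Omega,\partial B_r$ — the limit $\widetilde{\bv}$ solves the Lamé system with the original coefficients in $\Omega\backslash\overline{D}$, the constant-coefficient Navier system in $B_r\backslash\overline{\Omega}$ with $\ct_{\bnu}(\widetilde{\bv}^{s})=\Lambda\widetilde{\bv}^{s}$ on $\partial B_r$, homogeneous transmission conditions on $\partial\Omega$, and $\ct_{\bnu}(\widetilde{\bv})={\bf 0}$ on $\partial D$; by the equivalence with \eqref{eq:scattering1} for a traction-free obstacle and the uniqueness in Proposition~\ref{pro:Well-posed}, $\widetilde{\bv}\equiv{\bf 0}$ in $B_r\backslash\overline{D}$. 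Finally, the real part of the global Betti identity — in which the $D$-contribution $\varepsilon_k\int_D(\Ccal_0:\nabla\overline{\widetilde{\bv}_k}):\nabla\widetilde{\bv}_k\,\rmd\bx$ now vanishes in the limit — shows $\int_{\Omega\backslash\overline{D}}(\Ccal:\nabla\overline{\widetilde{\bv}_k}):\nabla\widetilde{\bv}_k\,\rmd\bx+\int_{B_r\backslash\overline{\Omega}}(\Ccal^e:\nabla\overline{\widetilde{\bv}_k}):\nabla\widetilde{\bv}_k\,\rmd\bx\to 0$, so by the Legendre ellipticity \eqref{eq:ellip1} and Korn's inequality $\|\nabla\widetilde{\bv}_k\|_{L^2(B_r\backslash\overline{D})^n}\to 0$; together with the strong $L^2$ convergence to $0$ this contradicts $\|\widetilde{\bv}_k\|_{H^1(B_r\backslash\overline{D})^n}=1$. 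Combining \eqref{ineq:Estioutside} with the interior estimate of the second step then yields \eqref{ineq:EstiinsideD}.

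I expect the genuine difficulty to be exactly the $\varepsilon$-uniformity of \eqref{ineq:Estioutside}: since the energy form degenerates on $D$ as $\varepsilon\to 0$, a bare Fredholm/Lax--Milgram argument only produces an $\varepsilon$-dependent constant, and one must show that in the limit the obstacle decouples, leaving the well-posed traction-free obstacle problem. The two ingredients that make the passage to the limit work are the conormal scaling $\|\ct_{\bnu}(\widetilde{\bu})\|_{H^{-1/2}(\partial D)^n}\le C(\varepsilon\|\nabla\widetilde{\bu}\|_{L^2(D)^n}+\|\widetilde{\bu}\|_{L^2(D)^n})$ and the order-one damping $\Im\rho_0=\tau_0>0$ inside $D$.
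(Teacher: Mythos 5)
Your proof is correct and shares the paper's two-step architecture: bound $\widetilde{\bu}$ on $D$ conditionally on its norm on $B_r\backslash\overline{D}$, then establish the exterior estimate \eqref{ineq:Estioutside} with an $\varepsilon$-uniform constant by contradiction. What differs is how the contradiction closes. The paper's route is a short absorption: after normalizing so that $\|\widetilde{\bg}\|_{H^1(B_r\backslash\overline{D})^n}=1$, it feeds the conormal estimate $\|\bnu\cdot(\Ccal:\nabla\widetilde{\bg})\|_{H^{-1/2}(\partial D)^n}\le\widetilde C\,\varepsilon\,\|\widetilde{\bg}\|_{H^1(D)^n}$ and the $\sqrt{\varepsilon}$-bound of type \eqref{eq:inter8} into Lemma~\ref{lem:unique proof} and concludes $\|\widetilde{\bg}\|_{H^1(B_r\backslash\overline{D})^n}=O(\sqrt{\varepsilon})$. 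You instead pass to a weak limit, show that it solves the traction-free obstacle problem with trivial data and hence vanishes by uniqueness, and then upgrade to strong $H^1$ convergence from the real part of the Betti identity; this last step, which you leave implicit, does require the split $\Lambda=\Lambda_0+(\Lambda-\Lambda_0)$ on $\partial B_r$, using \eqref{ineq:DtN} to move $-\Re\int_{\partial B_r}\Lambda_0\widetilde{\bv}_k^s\cdot\overline{\widetilde{\bv}_k^s}\,\rmd s\ge 0$ to the left and the compactness of $\Lambda-\Lambda_0$ to dispose of the remainder, since otherwise that boundary term is only a weak--weak pairing. The more substantive divergence is the conormal scaling. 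Applying Lemma~\ref{eq:cornormal} in $D$ to $\mathcal{L}_{\mathcal{C}^0}\widetilde{\bu}=-\omega^2\rho_0\widetilde{\bu}$ with $\mathcal{C}^0=\varepsilon\,\Ccal_0$ gives your complementary bound $\|\ct_{\bnu}(\widetilde{\bu})\|_{H^{-1/2}(\partial D)^n}\le C\bigl(\varepsilon\|\nabla\widetilde{\bu}\|_{L^2(D)^n}+\|\widetilde{\bu}\|_{L^2(D)^n}\bigr)$, in which $\varepsilon$ multiplies only the gradient: the $\|\bh\|_{H^{-1}(D)^n}$ contribution carries no $\varepsilon$. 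The paper's stated bound $\widetilde C\,\varepsilon\,\|\widetilde{\bg}\|_{H^1(D)^n}$ is strictly stronger than what Corollary~\ref{cor:conormal} delivers, and with the correct form the paper's absorption does not close on its own because $\|\widetilde{\bg}\|_{L^2(D)^n}$ is a priori only $O(1)$. Your extra ingredient --- the imaginary part of the energy identity, which forces $\|\widetilde{\bv}_k\|_{L^2(D)^n}\to 0$ and is precisely the place where $\tau_0>0$ enters --- is what makes the $\varepsilon$-uniformity go through, so your longer compactness-based argument is also the more defensible one at this delicate point.
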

\begin{proof}

From \eqref{eq:scattering+medium}, we know that $\mathcal{L}_{\mathcal{\widetilde{C}}}\,\mathbf{\widetilde{u}}+\omega^2\widetilde{\rho}\,\mathbf{\widetilde{u}}={\bff}$ in $D$ can be described as
$$
\nabla \cdot(\Ccal^0(\bx) : \nabla \widetilde{\bu})+\omega^2\rho_0 \widetilde{\bu}={\bf 0}.
$$
Multiplying it by $\overline{\widetilde{\bu}}$ and integrating over $D$, and then utilizing the Betti's first formula, we get
\begin{align}\label{eq:inter1}
-\int_D(\Ccal^0:\nabla\overline{\widetilde{\bu}}):\nabla\widetilde{\bu} \,\rmd\bx + \int_{\partial D}\bnu\cdot(\Ccal^0:\nabla\widetilde{\bu})\cdot\overline{\widetilde{\bu} } \,\,\rmd s(\bx)+\omega^2\int_D\rho_0|\widetilde{\bu}|^2\,\rmd\bx=0.
\end{align}
Repeating the similar deduction  for $\mathcal{L}_{\mathcal{\widetilde{C}}}\,\mathbf{\widetilde{u}}+\omega^2\widetilde{\rho}\,\mathbf{\widetilde{u}}={\bff}$ in $\Omega\backslash \overline{D}$, we have
\begin{align}\label{eq:inter2}
\int_{\Omega\backslash\overline{D}}\,(\Ccal(\bx):\nabla\overline{\widetilde{\bu}}):\nabla\widetilde{\bu} \,\rmd\bx =& \int_{\partial \Omega}\bnu\cdot(\Ccal(\bx):\nabla\widetilde{\bu})\cdot\overline{\widetilde{\bu}} \,\rmd s(\bx)
- \int_{\partial D}\bnu\cdot(\Ccal(\bx):\nabla\widetilde{\bu})\cdot\overline{\widetilde{\bu}} \,\rmd s(\bx)\nonumber\\
&+\omega^2\int_{\Omega\backslash\overline{D}}\rho(\bx)|\widetilde{\bu}|^2\,\rmd\bx.
\end{align}
Similarly, we obtain the following integral equation over $B_r\backslash\overline{\Omega}$
\begin{align}\label{eq:inter3}
\int_{B_r\backslash\overline{\Omega}}\,\,(\Ccal^e:\nabla\overline{\widetilde{\bu}^s}):\nabla\widetilde{\bu}^s\,\rmd\bx =& \int_{\partial {B_r}}\bnu\cdot(\Ccal^e:\nabla\widetilde{\bu}^s)\cdot\overline{\widetilde{\bu}^s}\,\, \rmd s(\bx)
- \int_{\partial \Omega}\bnu\cdot(\Ccal^e:\nabla \widetilde{\bu}^s)\cdot\overline{\widetilde{\bu}^s} \,\,\rmd s(\bx)\nonumber\\
&+\omega^2\,\rho_e\int_{B_r\backslash\overline{\Omega}}|\widetilde{\bu}^s|^2\,\rmd\bx-\int_{B_r\backslash\overline{ \Omega}}\bff(x)\cdot \overline{\widetilde{\bu}^s}\,\rmd\bx.
\end{align}
Adding up  the integrals \eqref{eq:inter1}, \eqref{eq:inter2} and \eqref{eq:inter3}, using the transmission conditions given in \eqref{eq:scattering+medium}, we derive that
\begin{align}\label{eq:inter3}
&-\int_D(\Ccal^0:\nabla\overline{\widetilde{\bu}}):\nabla\widetilde{\bu} \, \,\rmd\bx+\int_D \big( \eta_0\,\omega^2\,|\widetilde{\bu}|^2+\imath\tau_0\,\omega^2\,|\widetilde{\bu}|^2\big)\,\rmd\bx-\int_{\Omega\backslash\overline{D}}(\Ccal(x):\nabla\overline{\widetilde{\bu}}):\nabla\widetilde{\bu} \,\rmd\bx\nonumber\\
&+\omega^2\int_{\Omega\backslash\overline{D}}\rho(\bx)|\widetilde{\bu}|^2\,\rmd\bx-\int_{B_r\backslash\overline{\Omega}}\,\,(\Ccal^e:\nabla\overline{\widetilde{\bu}^s}):\nabla\widetilde{\bu}^s \,\rmd\bx+ \int_{\partial B_r}\bnu\cdot(\Ccal^e:\nabla\widetilde{\bu}^s)\cdot\overline{\widetilde{\bu}^s} \,\,\rmd s(\bx)\nonumber\\
&+\int_{\partial {\Omega}}\bnu\cdot(\Ccal^e:\nabla\widetilde{\bu}^s)\cdot\overline{\bu^{in}} \,\,\rmd s(\bx)+\int_{\partial {\Omega}}\bnu\cdot(\Ccal^e:\nabla\bu^{in})\cdot\overline{\widetilde{\bu}^s} \,\rmd s(\bx)+\omega^2\,\int_{B_r\backslash\overline{\Omega}}\,\rho_e|\widetilde{\bu}^s|^2\,\rmd\bx\nonumber\\
&+\int_{\partial {\Omega}}\bnu\cdot(\Ccal^e:\nabla\bu^{in})\cdot\overline{\bu^{in}}\,\rmd s(\bx)=\int_{B_r\backslash\overline{\Omega}}\bff(x)\cdot \overline{\widetilde{\bu}^s}\,\rmd\bx.
\end{align}
Taking the real and imaginary parts of \eqref{eq:inter3}, it is easy to obtain that
\begin{align}
\int_D(\Ccal^0:\nabla\overline{\widetilde{\bu}}):\nabla\widetilde{\bu} \, \rmd\bx&=\int_D \eta_0\,\omega^2|\widetilde{\bu}|^2\rmd\bx-\int_{\Omega\backslash\overline{D}}(\Ccal(\bx):\nabla\overline{\widetilde{\bu}}):\nabla\widetilde{\bu}\, \rmd\bx+\int_{\Omega\backslash\overline{D}}\omega^2\Re\rho|\widetilde{\bu}|^2 \,\rmd\bx\nonumber\\
&\quad-\int_{B_r\backslash\overline{\Omega}}(\Ccal^e:\nabla\overline{\widetilde{\bu}^s}):\nabla\widetilde{\bu}^s\, \rmd\bx+\Re\int_{\partial B_r}\bnu\cdot(\Ccal^e:\nabla\widetilde{\bu}^s)\cdot\overline{\widetilde{\bu}^s}\,\rmd s(\bx)\nonumber\\
&\quad+\Re\int_{\partial {\Omega}}\bnu\cdot(\Ccal^e:\nabla\widetilde{\bu}^s)\cdot\overline{\bu^{in}}\,\rmd s(\bx)
+\Re\int_{\partial {\Omega}}\bnu\cdot(\Ccal^e:\nabla\bu^{in})\cdot\overline{\widetilde{\bu}^s}\,\rmd s(\bx)\nonumber\\
&\quad+\Re\int_{\partial {\Omega}}\bnu\cdot(\Ccal^e:\nabla\bu^{in})\cdot\overline{\bu^{in}}\rmd\bx+\omega^2\,\rho_e\,\int_{B_r\backslash\overline{\Omega}}|\widetilde{\bu}^s|^2\,\rmd\bx\nonumber\\
&\quad-\Re\int_{B_r\backslash\overline{ \Omega}}\bff(\bx)\cdot \overline{\widetilde{\bu}^s}\,\rmd\bx \label{eq:inter4}
\end{align}
and
\begin{align}
\omega^2\tau_0\int_D|\widetilde{\bu}|^2\,\rmd\bx&=-\omega^2\int_{\Omega\backslash\overline{D}}\Im\rho|\widetilde{\bu}|^2\,\rmd\bx -\Im\int_{\partial B_r}\bnu\cdot(\Ccal^e:\nabla\widetilde{\bu}^s)\cdot \overline{\widetilde{\bu}^s}\,\,\rmd s(\bx)\nonumber\\
&\quad -\Im\int_{\partial \Omega}\bnu\cdot(\Ccal^e:\nabla\widetilde{\bu}^s)\cdot \overline{\bu^{in}}\,\rmd s(\bx)-\Im\int_{\partial \Omega}\bnu\cdot(\Ccal^e:\nabla\bu^{in})\cdot \overline{\widetilde{\bu}^s}\,\rmd s(\bx)\nonumber\\
&\quad-\Im\int_{\partial \Omega}\bnu\cdot(\Ccal^e:\nabla\bu^{in})\cdot \overline{\bu^{in}}\,\rmd s(\bx)+\Im\int_{B_r\backslash\overline{\Omega}}\bff(\bx)\cdot\overline{\widetilde{\bu}^s}\,\rmd\bx.\label{eq:inter6}
\end{align}
Using the Kron's inequality (cf. \cite{Marsden1983,Mclean2000}), the definition of the norm of conormal derivatives, H\"older inequality, and Corollary \ref{cor:conormal}, we can directly obtain the following inequalities
\begin{align}
&\varepsilon \left\|\nabla\widetilde{\bu}\right\|^2_{L^2(D)^n}\leq C_1\big(\left\|\widetilde{\bu}\right\|^2_{L^2(D)^n}+\left\|\widetilde{\bu}\right\|^2_{H^1(B_r\backslash\overline{D})^n}+\left\|\bu^{in}\right\|^2_{H^1(B_r\backslash\overline{\Omega})^n}+\left\|\bff\right\|^2_{L^2(B_r\backslash\overline{D})^n}\big), \label{eq:inter5} \\
&\|\widetilde{\bu}\|^2_{L^2(D)^n} \leq C_2\left( \|\widetilde{\bu}\|^2_{H^1(B_r\backslash\overline{D})^n} +\|\bu^{in}\|^2_{H^1(B_r\backslash\overline{\Omega})^n}+ \|\bff\|^2_{L^2(B_r\backslash\overline{\Omega})^n}\right),\label{eq:inter7}
\end{align}
where $C_1$, $C_2$ are positive constants only related to $\lambda_0$, $\mu_0$, $\eta_0$, $\tau_0$, $\omega$, $\Omega$, $B_r$, $\Ccal(\bx)$, $\Ccal^e$ and $\rho$.
Thus, we easily prove the important estimate by adding up \eqref{eq:inter7} and \eqref{eq:inter5},
\begin{align}\label{eq:inter8}
\sqrt{\varepsilon}\|\widetilde{\bu}\|_{H^1(D)^n} \leq \widetilde{C}\left( \|\widetilde{\bu}\|^2_{H^1(B_r\backslash\overline{D})^n}  +\|\bu^{in}\|^2_{H^1(B_r\backslash\overline{\Omega})^n}+ \|\bff\|^2_{L^2(B_r\backslash\overline{\Omega})^n}\right)^{1/2},
\end{align}
where $\widetilde{C}=\max\{\sqrt{C_1},\,\sqrt{C_3}\}$.

In what follows, we shall prove \eqref{ineq:Estioutside} by contradiction. Suppose \eqref{ineq:Estioutside} is not true. Without loss of generality, we assume that for any nonnegative integer $n$, there exists a set of data ($\bff^n$, $\bu_n^{in}$, $\widetilde{\bu}^n$), where $\widetilde{\bu}^n$ is the unique solution of \eqref{eq:scattering+medium} with $\bff^n$ and $\bu_n^{in}$ as inputs, satisfy the restriction
\begin{align}
\left\{ \begin{array}{ll}
\|\bff^n\|_{L^2(B_{r_0}\backslash\overline{\Omega})^n}+\|\bu^{in}_n\|_{H^1(B_r\backslash\overline{\Omega})^n}=1	,\quad \\[5pt]
\widetilde{\bu}^n\to\infty, \quad \mbox{as}\quad \varepsilon\to 0.
\end{array}
 \right.
\end{align}
 We can construct another set of data ($\widetilde{\bff}^n$, $\widetilde{\bg}^{in}$, $\widetilde{\bg}$) as follows:
 \begin{align}
\left\{ \begin{array}{ll}
\widetilde{\bff}^n=\dfrac{\bff^n}{\|\widetilde{\bu}^n\|_{H^1(B_r\backslash\overline{D})^n}},\quad \widetilde{\bg}=\dfrac{\widetilde{\bu}^n}{\|\widetilde{\bu}^n\|_{H^1(B_r\backslash\overline{D})^n}},\\[15pt]
\widetilde{\bg}^{in}=\dfrac{\bu^{in}_n}{\|\widetilde{\bu}^n\|_{H^1(B_r\backslash\overline{D})^n}},\quad \widetilde{\bg}^{s}=\dfrac{\widetilde{\bu}^{n,s}}{\|\widetilde{\bu}^n\|_{H^1(B_r\backslash\overline{D})^n}},\\[15pt]
\widetilde{\bu}^{n,s}= \widetilde{\bu}^n-\bu^{in}_n,\quad \quad\quad\,\,\widetilde{\bg}^{s}= \widetilde{\bg}-\widetilde{\bg}^{in},
\end{array}
 \right.
\end{align}
where $\widetilde{\bg}$ is the unique solution of \eqref{eq:scattering+medium} with $\widetilde{\bff}^n$ and $\widetilde{\bg}^{in}$ as inputs.
Obviously,
\begin{align}\label{eq:A8}
\|\widetilde{\bg}\|_{H^1(B_r\backslash\overline{D})^n}=1,\quad \|\widetilde{\bff}^n\|_{L^2(B_{r_0}\backslash\overline{\Omega})^n}\to 0,\quad \|\widetilde{\bg}^{in}\|_{H^1(B_r\backslash\overline{\Omega})^n}\to 0 \quad\mbox{as}\quad \varepsilon\to 0.
\end{align}
 In fact, we can verify that $(\widetilde{\bg}\big|_{\Omega\backslash\overline{D}},\,\widetilde{\bg}^s\big|_{\mathbb{R}^n\backslash\overline{\Omega}})$ is the unique solution to  problem \eqref{mode:transmission} with $\bp=\bnu\cdot(\Ccal(\bx):\nabla\widetilde{\bg})\Big|_{\partial{D}}$, $\bh_1=\widetilde{\bg}^{in}\big|_{\partial{\Omega}}$, and $\bh_2=\ct_{\bnu}(\widetilde{\bg}^{in})\big|_{\partial{\Omega}}$. According to Lemma \ref{lem:unique proof} and Corollary \ref{cor:conormal}, we have
 \begin{align*}
 \left\{ \begin{array}{ll}
\|\widetilde{\bg}\|_{H^1(B_R\backslash\overline{D})^n}\leq C\big(\big\|\bnu\cdot(\Ccal(x):\nabla\widetilde{\bg})\big\|_{H^{-1/2}(\partial{D})^n}+  \|\widetilde{\bff}^n\|_{L^2(B_{r_0}\backslash\overline{\Omega})^n}  + \|\widetilde{\bg}^{in}\|_{H^{1}(B_r\backslash\overline{\Omega})^n}      \big),\\[10pt]
\Big\|\bnu\cdot(\Ccal(\bx):\nabla\widetilde{\bg})\Big\|_{H^{-1/2}(\partial D)^n}\leq\widetilde{C}\,\varepsilon\,\|\widetilde{\bg}\|_{H^1(D)^n},
 \end{array}
 \right.
  \end{align*}  
  where $C$ and $\widetilde{C}$ are positive constants not relying on $\varepsilon$. Similar to \eqref{eq:inter8}, we can adopt a completely similar argument for  \eqref{eq:scattering+medium} with  the set of data ($\widetilde{\bff}^n$, $\widetilde{\bg}^{in}$, $\widetilde{\bg}$) to derive that

$$\sqrt{\varepsilon}\,\|\widetilde{\bg}\|_{H^1(D)^n}
\leq\,\small{\big(\|\widetilde{\bg}\|_{H^1(B_r\backslash\overline{D})^n} +  \|\widetilde{\bff}^n\|_{L^2(B_{r_0}\backslash\overline{\Omega})^n}  + \|\widetilde{\bg}^{in}\|_{H^{1}(B_r\backslash\overline{\Omega})^n} \big)}. $$
Hence,
$$\big\|\widetilde{\bg}\big\|_{H^1(B_{r}\backslash\overline{D})^n}\to 0\quad\mbox{ as}\quad \varepsilon\to 0^+,$$
which contradicts with the equality $\|\widetilde{\bg}\|_{H^1(B_{r}\backslash\overline{D})^n}=1$. Therefore, the inequality \eqref{ineq:Estioutside} holds.

Substituting  \eqref{ineq:Estioutside} into \eqref{eq:inter8}, using the inequality of arithmetic and geometric means,
one can easily get \eqref{ineq:EstiinsideD}. This completes the proof.

\end{proof}

Next, we shall derive some sharp estimations  of the systems \eqref{eq:scattering1} and \eqref{eq:scattering+medium}, which can indicate that $(\Omega; \widetilde{\mathcal{C}}, \widetilde{\rho})$ is an effective $\varepsilon^{1/2}$-realization of $D\oplus(\Omega\backslash\overline{D}; \mathcal{C}, \rho)$ with traction-free obstacle. Firstly, we would like to show that the conormal derivation of $\Ccal(\bx)$ on the boundary $\partial D$ can be  estimated by the input data.
\begin{prop}\label{th:coderiva_estima}
Let $\widetilde{\bu}\in H^1_{loc}(\mathbb{R}^n)^n$ be the solution to the system \eqref{eq:scattering+medium}. Then there exists a constant $C$ such that the following estimate holds for $\varepsilon\ll 1$ and $r>r_0$:
\begin{align}\label{eq:coderiva_estima}
\big\|  \bnu\cdot[\Ccal(\bx) : \nabla \widetilde{\bu}]\big\|_{H^{-1/2}(\partial D)^n}\leq C\,\varepsilon^{1/2}\left( \|\bu^{in}\|_{H^1(B_r\backslash\overline{\Omega})^n}  + \|\bff\|_{L^2(B_{r_0}\backslash\overline{\Omega})^n}\right).
\end{align}
\end{prop}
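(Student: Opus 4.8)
The plan is to transfer the conormal derivative to the interior of $D$ via the transmission condition, and there exploit both that the effective tensor $\mathcal{C}^0$ is $O(\varepsilon)$ small (cf.\ \eqref{eq:eff2}) and that $\widetilde{\bu}$ is already controlled in $D$ by Lemma~\ref{le:prioriestimati}.

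First I would use the transmission condition $\ct_{\bnu}(\widetilde{\bu}^{-})=\ct_{\bnu}(\widetilde{\bu}^{+})$ on $\partial D$ in \eqref{eq:scattering+medium}: its left-hand side is $\bnu\cdot[\mathcal{C}(\bx):\nabla\widetilde{\bu}]$ taken from $\Omega\backslash\overline{D}$, while its right-hand side is $\bnu\cdot[\mathcal{C}^0:\nabla\widetilde{\bu}]$ taken from inside $D$, with $\mathcal{C}^0$ the isotropic tensor of \eqref{eq:eff2} satisfying $\|\mathcal{C}^0\|_{L^\infty(D)}\le C\varepsilon$. Hence $\|\bnu\cdot[\mathcal{C}(\bx):\nabla\widetilde{\bu}]\|_{H^{-1/2}(\partial D)^n}=\|\bnu\cdot[\mathcal{C}^0:\nabla\widetilde{\bu}]\|_{H^{-1/2}(\partial D)^n}$, and it suffices to bound the latter. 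Since $\mathcal{L}_{\mathcal{C}^0}\widetilde{\bu}=-\omega^2\rho_0\widetilde{\bu}\in L^2(D)^n$ inside $D$, Lemma~\ref{eq:cornormal} (with $\Omega=D$, $\mathcal{C}=\mathcal{C}^0$, $\bh=-\omega^2\rho_0\widetilde{\bu}$) gives, for every $\bv\in H^1(D)^n$,
\[
\big(\bnu\cdot[\mathcal{C}^0:\nabla\widetilde{\bu}],\,\gamma\bv\big)_{\partial D}=\int_D[\mathcal{C}^0:\overline{\nabla\widetilde{\bu}}]:\nabla\bv\,\mathrm{d}\bx-\omega^2\int_D\rho_0\,\widetilde{\bu}\cdot\bv\,\mathrm{d}\bx .
\]
Choosing $\bv$ a bounded extension of a prescribed $\gamma\bv\in H^{1/2}(\partial D)^n$ and using $\|\mathcal{C}^0\|_{L^\infty(D)}\le C\varepsilon$,
\[
\big\|\bnu\cdot[\mathcal{C}^0:\nabla\widetilde{\bu}]\big\|_{H^{-1/2}(\partial D)^n}\le C\big(\varepsilon\,\|\nabla\widetilde{\bu}\|_{L^2(D)^n}+\|\widetilde{\bu}\|_{L^2(D)^n}\big),
\]
and the first term is already of the claimed size because, by \eqref{ineq:EstiinsideD}, $\varepsilon\|\nabla\widetilde{\bu}\|_{L^2(D)^n}=\sqrt{\varepsilon}\,\big(\sqrt{\varepsilon}\,\|\widetilde{\bu}\|_{H^1(D)^n}\big)\le C\sqrt{\varepsilon}\big(\|\bu^{in}\|_{H^1(B_r\backslash\overline{\Omega})^n}+\|\bff\|_{L^2(B_{r_0}\backslash\overline{\Omega})^n}\big)$.

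The principal difficulty is to show that the zeroth-order contribution $\omega^2\int_D\rho_0\widetilde{\bu}\cdot\bv$ is also $O(\varepsilon^{1/2})$ and not merely $O(1)$: a naive estimate through $\|\widetilde{\bu}\|_{L^2(D)^n}$, which is controlled only by \eqref{ineq:Estioutside}, the imaginary-part identity \eqref{eq:inter6} and Lemma~\ref{le:prioriestimati}, gives $O(1)$. The extra decay has to be extracted from the singularly perturbed structure of \eqref{eq:scattering+medium} inside $D$: since $\mathcal{C}^0=O(\varepsilon)$ while $\rho_0$ is $O(1)$ with $\Im\rho_0=\tau_0>0$, the field $\widetilde{\bu}$ concentrates in an $O(\sqrt{\varepsilon})$ boundary layer $\Sigma_\varepsilon:=\{\bx\in D:\mathrm{dist}(\bx,\partial D)<\sqrt{\varepsilon}\}$ and is smaller in the interior. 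Accordingly, in the identity above I would take $\bv$ to be an extension of $\gamma\bv$ with $\|\bv\|_{H^1(D)^n}\le C\|\gamma\bv\|_{H^{1/2}(\partial D)^n}$ whose $L^2$-mass in $\Sigma_\varepsilon$ is only $O(\varepsilon^{1/4}\|\gamma\bv\|_{H^{1/2}(\partial D)^n})$ (e.g.\ a harmonic extension), and combine this with the layer bound $\|\widetilde{\bu}\|_{L^2(\Sigma_\varepsilon)^n}\le C\varepsilon^{1/4}(\|\bu^{in}\|+\|\bff\|)$ together with an interior-smallness bound $\|\widetilde{\bu}\|_{L^2(D\setminus\Sigma_\varepsilon)^n}\le C\varepsilon^{1/2}(\|\bu^{in}\|+\|\bff\|)$, both obtained by testing $\mathcal{L}_{\mathcal{C}^0}\widetilde{\bu}+\omega^2\rho_0\widetilde{\bu}=\mathbf{0}$ against suitable cut-offs of $\widetilde{\bu}$ and using the $O(1)$ a priori control from \eqref{eq:inter4}, \eqref{eq:inter6} and Lemma~\ref{le:prioriestimati}. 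This renders both $\int_D[\mathcal{C}^0:\overline{\nabla\widetilde{\bu}}]:\nabla\bv$ and $\omega^2\int_D\rho_0\widetilde{\bu}\cdot\bv$ of order $\varepsilon^{1/2}(\|\bu^{in}\|_{H^1(B_r\backslash\overline{\Omega})^n}+\|\bff\|_{L^2(B_{r_0}\backslash\overline{\Omega})^n})\,\|\gamma\bv\|_{H^{1/2}(\partial D)^n}$, which yields \eqref{eq:coderiva_estima}. I expect this boundary-layer analysis in $D$ — in particular the interior-decay estimate for the singularly perturbed Lam\'e operator — to be the delicate step, the transmission-condition reduction and the appeals to Lemma~\ref{eq:cornormal} and Lemma~\ref{le:prioriestimati} being routine.
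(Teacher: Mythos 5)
Your attempt starts exactly where the paper starts: pass through the transmission condition on $\partial D$, identify $\bnu\cdot(\mathcal{C}:\nabla\widetilde{\bu})$ with the conormal derivative $\bnu\cdot(\mathcal{C}^0:\nabla\widetilde{\bu})$ computed from inside $D$, invoke Lemma~\ref{eq:cornormal}/Corollary~\ref{cor:conormal} to express this conormal derivative through the weak form in $D$, exploit $\mathcal{C}^0=\varepsilon\,\mathcal{C}_0$, and close with Lemma~\ref{le:prioriestimati}. Your treatment of the gradient term, giving $\varepsilon\|\nabla\widetilde{\bu}\|_{L^2(D)^n}\lesssim\varepsilon^{1/2}(\|\bu^{in}\|+\|\bff\|)$ via \eqref{ineq:EstiinsideD}, matches the intended mechanism.

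Where you genuinely add something is in pressing on the zeroth-order term $\omega^2\int_D\rho_0\widetilde{\bu}\cdot\bv\,\rmd\bx$, and you are right to do so. The paper's proof simply writes $\|\bnu\cdot(\mathcal{C}_0:\nabla\widetilde{\bu})\|_{H^{-1/2}(\partial D)^n}\leq C_0\,\|\widetilde{\bu}\|_{H^1(D)^n}$ with $C_0$ tacitly $\varepsilon$-uniform and concludes. But applying Lemma~\ref{eq:cornormal} to the rescaled tensor $\mathcal{C}_0$ in $D$ forces the source $\bh=\mathcal{L}_{\mathcal{C}_0}\widetilde{\bu}=-\varepsilon^{-1}\omega^2\rho_0\widetilde{\bu}$ into \eqref{ineq:estimate_conormal}, contributing $\varepsilon^{-1}\|\widetilde{\bu}\|_{L^2(D)^n}$; equivalently, working directly with $\mathcal{C}^0$ one obtains $\|\bnu\cdot(\mathcal{C}^0:\nabla\widetilde{\bu})\|_{H^{-1/2}(\partial D)^n}\lesssim\varepsilon\|\widetilde{\bu}\|_{H^1(D)^n}+\|\widetilde{\bu}\|_{L^2(D)^n}$. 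Since the imaginary-part identity \eqref{eq:inter6} and Lemma~\ref{le:prioriestimati} only provide $\|\widetilde{\bu}\|_{L^2(D)^n}=O(1)$, the second contribution is $O(1)$, not $O(\varepsilon^{1/2})$. So the one-line justification the paper gives for its key inequality does not actually follow from the cited lemmas; you have put your finger on a real gap in the published argument.

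Your proposed boundary-layer fix is the right kind of idea — extracting $O(\varepsilon^{1/4})$ decay of $\widetilde{\bu}$ in the thin collar $\Sigma_\varepsilon$ and another $\varepsilon^{1/4}$ from the extension $\bv$ — but as written it is a sketch rather than a proof. The thin-shell estimate $\|\bv\|_{L^2(\Sigma_\varepsilon)^n}\le C\varepsilon^{1/4}\|\bv\|_{H^1(D)^n}$ for a fixed $H^1$ extension is standard (Fubini plus trace) and fine. However, the two claims $\|\widetilde{\bu}\|_{L^2(\Sigma_\varepsilon)^n}\lesssim\varepsilon^{1/4}$ and $\|\widetilde{\bu}\|_{L^2(D\setminus\Sigma_\varepsilon)^n}\lesssim\varepsilon^{1/2}$ are not supplied by \eqref{eq:inter4}--\eqref{eq:inter7}: they require a genuine singular-perturbation/interior-decay estimate for the lossy Lam\'e operator $\varepsilon\mathcal{L}_{\mathcal{C}_0}+\omega^2\rho_0$ on $D$ (e.g.\ a weighted energy or barrier argument using $\Im\rho_0=\tau_0>0$). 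Until these are established, your argument — like the paper's — does not close. In short: you have correctly located the step that the paper glosses over, and proposed the right kind of repair, but the repair is not yet carried out.
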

\begin{proof}
By using the transmission on $\partial D$ in the system \eqref{eq:scattering+medium}, we have
\begin{align*}
\big\|\bnu\cdot(\Ccal(\bx):\widetilde{\bu})\big\|_{H^{-1/2}(\partial D)^n}&=\big\|\ct_{\bnu}(\widetilde{\bu})\big\|_{H^{-1/2}(\partial D)^n}=\big\|\bnu\cdot(\Ccal^e(\bx):\widetilde{\bu})\big\|_{H^{-1/2}(\partial D)^n}\\
&=\varepsilon\,\big\|\bnu\cdot(\Ccal_0(\bx):\widetilde{\bu})\big\|_{H^{-1/2}(\partial D)^n}\leq C_0\,\varepsilon\,\big\|\widetilde{\bu}\big\|_{H^{1}(D)^n},
\end{align*}
where $\Ccal_0(\bx)$ is a fourth-rank tensor with Lam\'{e} constants both equaling to one.
From Lemma \ref{le:prioriestimati}, it can be verified by straightforward calculations that
\begin{align*}
\big\|\bnu\cdot(\Ccal(\bx):\widetilde{\bu})\big\|_{H^{-1/2}(\partial D)^n}\leq C\varepsilon ^{1/2}\,\Big(   \|\bu^{in}\|_{H^1(B_r\backslash\overline{\Omega})^n}  + \|\bff\|_{L^2(B_{r_0}\backslash\overline{\Omega})^n} \Big).
\end{align*}
The proof is complete.
\end{proof}

The following proposition show that the  solution $\widetilde{\bu}\in H^1_{loc}(\mathbb{R}^n)^n$ to  \eqref{eq:scattering+medium}   can approximate the solution $\bu\in H^1_{loc}(\mathbb{R}^n\backslash\overline{D})^n$ to \eqref{eq:scattering1} with the respect to the parameter $\varepsilon$.

\begin{prop}\label{th:solution_esti}
Suppose $\widetilde{\bu}\in H^1_{loc}(\mathbb{R}^n)^n$ is the solution to system \eqref{eq:scattering+medium} and $\bu\in H^1_{loc}(\mathbb{R}^n\backslash\overline{D})^n$ is the solution to system \eqref{eq:scattering1}. Then there exist two constants $\varepsilon_0$ and $C$ such that the following estimate holds for $\varepsilon\,<\,\varepsilon_0$ and $r>r_0$:
\begin{align}\label{ineq:solution_estiam}
\|\widetilde{\bu}-\bu\|_{H^1(B_r\backslash\overline{D})^n}\leq C\,\varepsilon^{1/2}\left( \|\bu^{in}\|_{H^1(B_r\backslash\overline{\Omega})^n}  + \|\bff\|_{L^2(B_{r_0}\backslash\overline{\Omega})^n}\right).
\end{align}
\end{prop}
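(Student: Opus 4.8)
The plan is to control the difference $\bw:=\widetilde{\bu}-\bu$ by observing that in the subtraction the source $\bff$, the incident field $\bu^{in}$ and the transmission data on $\partial\Omega$ all cancel, so that $\bw$ solves a scattering problem of exactly the form \eqref{mode:transmission} whose only datum is the traction $\ct_{\bnu}(\widetilde{\bu})$ on $\partial D$; the estimate then follows by feeding the conormal bound of Proposition~\ref{th:coderiva_estima} (itself a consequence of the energy identities in Lemma~\ref{le:prioriestimati}) into the a priori estimate of Lemma~\ref{lem:unique proof}.

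Concretely, first I would note that, since $(\widetilde{\mathcal C},\widetilde\rho)=(\mathcal C,\rho)$ on $\Omega\backslash\overline D$, $(\widetilde{\mathcal C},\widetilde\rho)=(\mathcal C^e,\rho_e)$ on $\mathbb R^n\backslash\overline\Omega$, $\supp(\bff)\subset B_{r_0}\backslash\overline\Omega$, and $\bu^{in}$ is an entire Navier solution, the pair $(\widetilde{\bu}\big|_{\Omega\backslash\overline D},\,\widetilde{\bu}^s\big|_{\mathbb R^n\backslash\overline\Omega})$ solves \eqref{mode:transmission} with $\bp=\bnu\cdot(\mathcal C(\bx):\nabla\widetilde{\bu})\big|_{\partial D}$ (the common traction across $\partial D$, read off from the transmission conditions in \eqref{eq:scattering+medium}), $\bh_1=\bu^{in}\big|_{\partial\Omega}$, $\bh_2=\ct_{\bnu}(\bu^{in})\big|_{\partial\Omega}$ and the given $\bff$, whereas $(\bu\big|_{\Omega\backslash\overline D},\,\bu^s\big|_{\mathbb R^n\backslash\overline\Omega})$ solves \eqref{mode:transmission} with the same $\bh_1,\bh_2,\bff$ but with $\bp=\mathbf 0$, because $\ct_{\bnu}(\bu)=\mathbf 0$ on $\partial D$. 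By linearity of \eqref{mode:transmission}, the difference — namely $\bw$ on $\Omega\backslash\overline D$ and $\bw^s:=\widetilde{\bu}^s-\bu^s$ on $\mathbb R^n\backslash\overline\Omega$ — solves \eqref{mode:transmission} with data $\bp=\bnu\cdot(\mathcal C(\bx):\nabla\widetilde{\bu})\big|_{\partial D}$, $\bh_1=\bh_2=\mathbf 0$ and $\bff=\mathbf 0$.

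Then I would invoke the a priori estimate of Lemma~\ref{lem:unique proof} for this problem, which yields
\[
\|\bw\|_{H^1(\Omega\backslash\overline D)^n}+\|\bw^s\|_{H^1(B_r\backslash\overline\Omega)^n}\le C\,\big\|\bnu\cdot(\mathcal C(\bx):\nabla\widetilde{\bu})\big\|_{H^{-1/2}(\partial D)^n},
\]
and bound the right-hand side by Proposition~\ref{th:coderiva_estima}, which supplies the factor $\varepsilon^{1/2}\big(\|\bu^{in}\|_{H^1(B_r\backslash\overline\Omega)^n}+\|\bff\|_{L^2(B_{r_0}\backslash\overline\Omega)^n}\big)$. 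Finally, since $\widetilde{\bu}-\bu=\bw$ in $\Omega\backslash\overline D$ and $\widetilde{\bu}-\bu=\bw^s$ in $B_r\backslash\overline\Omega$, one has $\|\widetilde{\bu}-\bu\|_{H^1(B_r\backslash\overline D)^n}\le\|\bw\|_{H^1(\Omega\backslash\overline D)^n}+\|\bw^s\|_{H^1(B_r\backslash\overline\Omega)^n}$, and \eqref{ineq:solution_estiam} follows. The only place requiring care is the identification step — checking that every datum except the traction on $\partial D$ genuinely cancels in the subtraction and that the surviving datum is exactly $\ct_{\bnu}(\widetilde{\bu})\big|_{\partial D}$ — because the analytic substance (well-posedness, the DtN truncation, the Fredholm/compactness argument) is already packaged in Lemma~\ref{lem:unique proof} and is simply applied with the correct inputs, and the smallness in $\varepsilon$ comes entirely from Proposition~\ref{th:coderiva_estima}.
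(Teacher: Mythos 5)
Your proof is correct and follows essentially the same route as the paper: define $\bv=\widetilde{\bu}-\bu$, observe it solves \eqref{mode:transmission} with $\bff=\bh_1=\bh_2=\mathbf{0}$ and $\bp=\bnu\cdot(\mathcal{C}:\nabla\widetilde{\bu})|_{\partial D}$ (since $\ct_{\bnu}(\bu)=\mathbf{0}$ there), then chain the a priori bound of Lemma~\ref{lem:unique proof} with the conormal estimate of Proposition~\ref{th:coderiva_estima}. The only cosmetic difference is that you first exhibit $\widetilde{\bu}$ and $\bu$ separately as solutions of \eqref{mode:transmission} with matching $\bh_1,\bh_2,\bff$ and differing $\bp$ before subtracting, whereas the paper writes the difference equation directly.
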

\begin{proof}
Let $\bv=\widetilde{\bu}-\bu$, where $\widetilde{\bu}$ and $\bu$ are the total fields of system \eqref{eq:scattering+medium} and system \eqref{eq:scattering1}, respectively. We can easily verify that $(\bv\big|_{\Omega\backslash\overline{D}},\,\bv^s\big|_{\mathbb{R}^n\backslash\overline{\Omega}})$ is the unique solution of system \eqref{mode:transmission} with the boundary conditions: $\bff=\bh_1=\bh_2=0$, $\bp=\bnu\cdot(\Ccal(\bx):\nabla\bv)=\bnu\cdot(\Ccal(\bx):\nabla\widetilde{\bu})$. Combining $\bv^s=\widetilde{\bu}^s-\bu^s=(\widetilde{\bu}-\bu^{in})-(\bu-\bu^{in})=\bv$ and Lemma \ref{lem:unique proof} with Lemma \ref{th:coderiva_estima}, we obtain
\begin{align*}
\|\bv\|_{H^1(B_r\backslash\overline{D})^n}&\leq C \big\|\bnu\cdot(\Ccal(\bx):\nabla\bv)\big\|_{H^{-1/2}(\partial D)^n}
=C\big\|\bnu\cdot(\Ccal(\bx):\nabla\widetilde{\bu})\big\|_{H^{-1/2}(\partial D)^n}\nonumber\\[5pt]
&\leq C\,\varepsilon^{1/2}\,\bigg(\|\bu^{in}\|_{\bH^1(B_r\backslash\overline{\Omega})}  + \|\bff\|_{L^2(B_{r_0}\backslash\overline{\Omega})^n}\bigg).
\end{align*}
The completes the proof.
\end{proof}

We are in the position to give the proof of Theorem \ref{thm:main1}  for Case 1.
\begin{proof}[Proof of Theorem \ref{thm:main1} for  Case 1]
 Let $\bv=\widetilde{\bu}-\bu$ and $\bv^s=\widetilde{\bu}^s-\bu^s$. We note that $\bv^s=\bv$. We use the following explicit expressions of the scattering amplitude of $\widetilde{\bu}^s$ and $\bu^s$ given by \cite{Alves2002,Arens2001} (see more details in  \cite{Alves1997,Dassios1984,Dassios1987,Dassios1995}):
\begin{eqnarray}
\widetilde{\mathbf{u}}^{\mathrm p,\infty} &=&\dfrac{\kappa_{\mathrm p}^2}{4\pi\omega^2}\int_{\partial B_r}\bigg\{\Big(\ct^{\by}_{\bnu}\big(\hat{\bx}\hat{\bx}^\top e^{-\imath\kappa_{\mathrm p}\hat{\bx}\cdot\by}\big)\Big)^\top \cdot\widetilde{\bu}^s(\by)\nonumber\\
&&-\big(\hat{\bx}\,\hat{\bx}^{\top}e^{-\imath\kappa_{\mathrm p}\hat{\bx}\cdot\by}\big)\cdot\ct_{\bnu}\widetilde{\bu}^s(\by)   \bigg\} \rmd s(y), \quad \, \hat{\bx}\in \mathbb{S}^{n-1},\label{eq:amplitude P_part-varepsilon-medium} \\
\mathbf{u}^{\mathrm p,\infty} &=&\dfrac{\kappa_{\mathrm p}^2}{4\pi\omega^2}\int_{\partial B_r}\bigg\{\Big(\ct^{\by}_{\bnu}\big(\hat{\bx}\hat{\bx}^\top e^{-\imath\kappa_{\mathrm p}\hat{\bx}\cdot\by}\big)\Big)^\top\cdot \bu^s(\by)\nonumber\\
&&-\big(\hat{\bx}\,\hat{\bx}^{\top}e^{-\imath\kappa_{\mathrm p}\hat{\bx}\cdot\by}\big)\cdot\ct_{\bnu}\bu^s(\by)  \bigg\} \rmd s(y), \quad \, \hat{\bx}\in \mathbb{S}^{n-1},
\label{eq:amplitude P_part} \\
\widetilde{\mathbf{u}}^{\mathrm s,\infty} &=&\dfrac{\kappa_{\mathrm s}^2}{4\pi\omega^2}\int_{\partial B_r}\bigg\{\Big\{\ct^{\by}_{\bnu}\Big(\big(I-\hat{\bx}\hat{\bx}^\top\big) e^{-\imath\kappa_{\mathrm s}\hat{\bx}\cdot\by}\Big)\Big\}^\top \cdot\widetilde{\bu}^s(\by) \nonumber\\
&&-\Big(I- \hat{\bx}\,\hat{\bx}^{\top}   \Big)e^{-\imath\kappa_{\mathrm s}\hat{\bx}\cdot\by} \cdot\ct_{\bnu}\widetilde{\bu}^s(\by)\bigg\} \rmd s(y), \quad \, \hat{\bx}\in \mathbb{S}^{n-1},\label{eq:amplitude S_part-varepsilon-medium} \\
\mathbf{u}^{\mathrm s,\infty} &=&\dfrac{\kappa_{\mathrm s}^2}{4\pi\omega^2}\int_{\partial B_r}\bigg\{ \Big\{\ct^{\by}_{\bnu}\Big(\big(I-\hat{\bx}\hat{\bx}^\top\big) e^{-\imath\kappa_{\mathrm s}\hat{\bx}\cdot\by}\Big)\Big\}^\top \cdot\bu^s(\by)\nonumber\\
&&-\Big(I- \hat{\bx}\,\hat{\bx}^{\top}   \Big)e^{-\imath\kappa_{\mathrm s}\hat{\bx}\cdot\by} \cdot\ct_{\bnu}\bu^s(\by)    \bigg\} \rmd s(y), \quad \, \hat{\bx}\in \mathbb{S}^{n-1},\label{eq:amplitude S_part-varepsilon}
\end{eqnarray}
where $\widetilde{\mathbf{u}}^{\mathrm p,\infty}$ and $\mathbf{u}^{\mathrm p,\infty}$ respectively are the longitudinal far-field patterns of $\widetilde{\bu}^s$ and $\bu^s$, which are normal to $\mathbb{S}^{n-1}$, whereas $\widetilde{\mathbf{u}}^{\mathrm s,\infty}$ and $\mathbf{u}^{\mathrm s,\infty}$ respectively are the transversal far-field patterns of $\widetilde{\bu}^s$ and $\bu^s$, which are tangential to $\mathbb{S}^{n-1}$. 
Then
\begin{align}
\widetilde{\mathbf{u}}^{\mathrm p,\infty}-\mathbf{u}^{\mathrm p,\infty}&=
\dfrac{\kappa_p^2}{4\pi\omega^2}\int_{\partial B_r}\bigg\{\Big(\ct^{\by}_{\bnu}\big(\hat{\bx}\hat{\bx}^\top e^{-\imath\kappa_{\mathrm p}\hat{\bx}\cdot\by}\big)\Big)^\top \cdot\bv^s(\by)\nonumber\\
&\quad- \big(\hat{\bx}\,\hat{\bx}^{\top}e^{-\imath\kappa_{\mathrm p}\hat{\bx}\cdot\by}\big)\cdot\ct_{\bnu}\bv^s(\by)\bigg\}\rmd s(y),\,\, \hat{\bx}\in \mathbb{S}^{n-1},\label{eq:amplitudeVP}\\
\widetilde{\mathbf{u}}^{\mathrm s,\infty}-\mathbf{u}^{\mathrm s,\infty}&=
\dfrac{\kappa_{\mathrm s}^2}{4\pi\omega^2}\int_{\partial B_r}\bigg\{\Big\{\ct^{\by}_{\bnu}\Big(\big(I-\hat{\bx}\hat{\bx}^\top\big) e^{-\imath\kappa_{\mathrm s}\hat{\bx}\cdot\by}\Big)\Big\}^\top \cdot\bv^s(\by)\nonumber\\
&\quad-\Big(I- \hat{\bx}\,\hat{\bx}^{\top}   \Big)e^{-\imath\kappa_{\mathrm s}\hat{\bx}\cdot\by} \cdot\ct_{\bnu}\bv^s(\by)\bigg\}\rmd s(y),\,\, \hat{\bx}\in \mathbb{S}^{n-1}.\label{eq:amplitudeVS}
\end{align}
Let $A=\hat{\bx}\hat{\bx}^\top e^{-\imath\kappa_{\mathrm p}\hat{\bx}\cdot\by}=[A_1,\,A_2,\,A_3]$, $B=\big(I-\hat{\bx}\hat{\bx}^\top\big) e^{-\imath\kappa_{\mathrm s}\hat{\bx}\cdot\by}=[B_1,\,B_2,\,B_3]$, where $A_j$ is the $j$th column of $A$ and $B_j$ is the $j$th column of $B$, $j=1,2,3$. Using the following fact that, for any vector field $\bpsi$,
\begin{align*}
 \left\{ \begin{array}{ll}
 \big|\bnu\cdot\nabla\bpsi\big|\leq C_1\big|    \nabla\bpsi\big|,\\[5pt]
 \big|\bnu\,\,\nabla\cdot\bpsi\big| \leq C_2\big|    \nabla\bpsi\big|,\\[5pt]
 \big|\bnu\times \nabla\times \bpsi\big|\leq C_3\big|    \nabla\bpsi\big|,
  \end{array}
 \right.
\end{align*}
where $|\cdot|$ denote Frobenius norm for a matrix or Euclidean norm for a vector, $C_1,\,C_2,\,C_3$ are positive constants, $\bnu$ denotes the unit outward normal to the boundary, we have
$$
\big|\ct^\by_{\bnu}A_j\big|\leq C_4,\quad \big|\nabla A_j\big|\leq C_5\,\kappa_{\mathrm p},\quad \big|A_j\big|\leq C_6\, r
$$
for $j=1,2,3$, where $C_4,\,C_5,\,C_6$ are positive constants. One can derive the following estimate by using trace theorem, Theorem \ref{th: thm1} and Proposition \ref{th:solution_esti},
\begin{align*}
\Big\|\widetilde{\mathbf{u}}^{\mathrm p,\infty} -\mathbf{u}^{\mathrm p,\infty}\big\|_{C(\mathbb{S}^{n-1})^n}&\leq C_7\Big( \|\bv\|_{H^{1/2}(B_r\backslash\overline{D})^n}  +  \|\ct^{\by}_{\bnu}\bv\|_{H^{-1/2}(\partial B_r)^n} \Big)\\[5pt]
&\leq C_8\Big( \|\bv\|_{H^{1}(B_r\backslash\overline{D})^n}  +  \|\ct^{\by}_{\bnu}\bv\|_{H^{-1/2}(\partial B_r)^n}        \Big)\\[5pt]
&\leq C_9\Big( \|\bv\|_{H^{1}(B_r\backslash\overline{D})^n} +C_{10}  \|\bv\|_{H^{1}(B_r\backslash\overline{D})^n}    \Big)\\[5pt]
&\leq C_{10}\,\varepsilon^{1/2}\,\bigg(\|\bu^{in}\|_{H^1(B_r\backslash\overline{\Omega})^n}  + \|\bff\|_{L^2(B_{r_0}\backslash\overline{\Omega})^n}\bigg),
\end{align*}
where $C_7,\,C_8,\,C_9, \,C_{10}$ are positive constants  depening only on $\omega$, $\kappa_p$, $\Ccal(\bx)$, $\Ccal^e$, $B_{r_0}\backslash\overline{\Omega}$ and $B_r\backslash\overline{D}$.
Similarly, we obtain that
\begin{align*}
\Big\|\widetilde{\mathbf{u}}^{\mathrm p,\infty}-\mathbf{u}^{\mathrm p,\infty}\big\|_{C(\mathbb{S}^{n-1})^n}\leq C_{11} \, \varepsilon^{1/2}\,\bigg(\|\bu^{in}\|_{H^1(B_r\backslash\overline{\Omega})^n}  + \|\bff\|_{L^2(B_{r_0}\backslash\overline{\Omega})^n}\bigg),
\end{align*}
where $C_{11}=C_{11}(\omega,\,\kappa_s,\,B_{r_0}\backslash\overline{\Omega},\,B_r\backslash\overline{D},\, \Ccal(\bx),\,\Ccal^e)$. Hence,
\begin{align*}
\Big\|\widetilde{\mathbf{u}}^{\infty} -\mathbf{u}^{\infty}\big\|_{C(\mathbb{S}^2)^n}\leq &\Big\|\widetilde{\mathbf{u}}^{\mathrm p,\infty} -\mathbf{u}^{\mathrm p,\infty}\big\|_{C(\mathbb{S}^{n-1})^n}+\Big\|\widetilde{\mathbf{u}}^{\mathrm s,\infty} -\mathbf{u}^{\mathrm s,\infty}\big\|_{C(\mathbb{S}^{n-1})^n}\\
\leq&C\, \varepsilon^{1/2}\,\bigg(\|\bu^{in}\|_{H^1(B_r\backslash\overline{\Omega})^n}  + \|\bff\|_{L^2(B_{r_0}\backslash\overline{\Omega})^n}\bigg),
\end{align*}
where $C=C(\omega,\,\kappa_p,\,\kappa_s,\,B_r\backslash\overline{D},\,B_{r_0}\backslash\overline{\Omega},\, \Ccal(\bx),\,\Ccal^e)\in \mathbb R_+ $. The proof is complete.
\end{proof}

\section{Proof of Theorem \ref{thm:main1} for  Case 2}\label{rigid obstacle}

In this section, we are committed to proving that $(D; \mathcal{C}^0, \rho_0)$ is an $\varepsilon^{1/2}$-realization of the rigid obstacle $D$ in the sense of Definition~\ref{def:2}, where $\mathcal{C}^0$ is given in the form \eqref{eq:lame2} and $\lambda$, $\mu$, $\rho_0$ satisfy the conditions \eqref{eq:eff2_2}.  An elastic medium $(\Omega ; \widetilde{\mathcal{C}}, \widetilde{\rho})$ is considered, which satisfies that $(\widetilde{\mathcal{C}}, \widetilde{\rho})\big|_{\Omega\backslash\overline{D}}=(\mathcal{C}, \rho)\big|_{\Omega\backslash\overline{D}}$, $(\widetilde{\mathcal{C}}, \widetilde{\rho})\big|_{D}=(\mathcal{C}^0, \rho_0)\big|_{D}$ and $(\mathcal{\widetilde{C}}, \widetilde{\rho})\big|_{\mathbb{R}^n\backslash\overline{\Omega}}=(\mathcal{C}^e, \rho_e)\big|_{\mathbb{R}^n\backslash\overline{\Omega}}$. Consider the medium scattering system \eqref{eq:scattering+medium} except that $( \mathcal{C}^0, \rho_0)$ satisfying \eqref{eq:eff2} is replaced by $( \mathcal{C}^0, \rho_0)$ with the parameters in \eqref{eq:eff2_2}.

In the following lemma, we derive that the unique solution $\widetilde{\bu}$ of \eqref{eq:scattering+medium}  in regions $B_r\backslash\overline{D}$ and $D$ can be estimated well by $\bu^{in}$ and $\bff$, which play an important role in the subsequent  proof.
\begin{lem}\label{le:prioriestimati 2}
Let $\widetilde{\bu}$ be the unique solution of \eqref{eq:scattering+medium} with $( \mathcal{C}^0, \rho_0)$ satisfying \eqref{eq:eff2_2}. Then there exist positive constants $r_0, \,C_1, \,C_2$ such that the following estimate holds for all $\varepsilon\ll 1$ and $r\geq r_0$:
\begin{align}
\left\|\widetilde{\bu}\right\|_{H^1(B_r\backslash\overline{D})^n}&\leq C_1\left( \|\bu^{in}\|_{H^1(B_r\backslash\overline{\Omega})^n}  + \|\bff\|_{L^2(B_{r_0}\backslash\overline{\Omega})^n}\right),\label{ineq:Estioutside 2}\\
\left\|\widetilde{\bu}\right\|_{H^1(D)^n}&\leq C_2\,\varepsilon^{1/2}\left( \|\bu^{in}\|_{H^1(B_r\backslash\overline{\Omega})^n}  + \|\bff\|_{L^2(B_{r_0}\backslash\overline{\Omega})^n}\right).\label{ineq:EstiinsideD 2}
\end{align}
\end{lem}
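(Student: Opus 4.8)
The plan is to transcribe the proof of Lemma~\ref{le:prioriestimati} to the present scaling, the only genuine differences being the powers of $\varepsilon$ attached to the two $D$-contributions. First I would apply Betti's first formula to $\mathcal{L}_{\widetilde{\mathcal{C}}}\widetilde{\bu}+\omega^2\widetilde{\rho}\,\widetilde{\bu}=\bff$ separately over $D$, over $\Omega\backslash\overline{D}$ and over $B_r\backslash\overline{\Omega}$, testing against $\overline{\widetilde{\bu}}$ in each region and adding the three identities; the transmission conditions on $\partial D$ and $\partial\Omega$ in \eqref{eq:scattering+medium} cancel the interface traction terms, so one obtains a single global identity whose real and imaginary parts are the analogues of \eqref{eq:inter4} and \eqref{eq:inter6}, except that, by \eqref{eq:eff2_2}, the elastic energy over $D$ now carries the factor $\varepsilon^{-2}$ while the damping term reads $\omega^2\varepsilon^{-1}\tau_0\int_D|\widetilde{\bu}|^2\,\rmd\bx$; all the remaining terms stay $O(1)$ in $\varepsilon$ and are controlled, via Corollary~\ref{cor:conormal}, the trace theorem and H\"older's inequality, by $\|\widetilde{\bu}\|_{H^1(B_r\backslash\overline{D})^n}^2+\|\bu^{in}\|_{H^1(B_r\backslash\overline{\Omega})^n}^2+\|\bff\|_{L^2(B_{r_0}\backslash\overline{\Omega})^n}^2$.

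Second, from the imaginary part, $\Im\rho\geq0$ on $\Omega\backslash\overline{D}$ forces the left-hand side $\omega^2\varepsilon^{-1}\tau_0\|\widetilde{\bu}\|_{L^2(D)^n}^2$ to be dominated by the $O(1)$ right-hand side, hence
\[
\|\widetilde{\bu}\|_{L^2(D)^n}^2\leq C\varepsilon\Big(\|\widetilde{\bu}\|_{H^1(B_r\backslash\overline{D})^n}^2+\|\bu^{in}\|_{H^1(B_r\backslash\overline{\Omega})^n}^2+\|\bff\|_{L^2(B_{r_0}\backslash\overline{\Omega})^n}^2\Big).
\]
From the real part, $\varepsilon^{-2}$ times the elastic energy over $D$ equals an $O(1)$ combination of the same quantities, so Kron's inequality on $D$ (which controls $\|\nabla\widetilde{\bu}\|_{L^2(D)^n}^2$ only modulo a lower-order $\|\widetilde{\bu}\|_{L^2(D)^n}^2$ term) gives
\[
\|\nabla\widetilde{\bu}\|_{L^2(D)^n}^2\leq C\|\widetilde{\bu}\|_{L^2(D)^n}^2+C\varepsilon^2\Big(\|\widetilde{\bu}\|_{H^1(B_r\backslash\overline{D})^n}^2+\|\bu^{in}\|_{H^1(B_r\backslash\overline{\Omega})^n}^2+\|\bff\|_{L^2(B_{r_0}\backslash\overline{\Omega})^n}^2\Big).
\]
Adding these and using $\varepsilon\ll1$ (so $\varepsilon^2\leq\varepsilon$) produces the auxiliary estimate
\[
\|\widetilde{\bu}\|_{H^1(D)^n}\leq C\sqrt{\varepsilon}\,\Big(\|\widetilde{\bu}\|_{H^1(B_r\backslash\overline{D})^n}+\|\bu^{in}\|_{H^1(B_r\backslash\overline{\Omega})^n}+\|\bff\|_{L^2(B_{r_0}\backslash\overline{\Omega})^n}\Big),
\]
the exact analogue of \eqref{eq:inter8}.

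Third, I would prove \eqref{ineq:Estioutside 2} by the contradiction argument of Lemma~\ref{le:prioriestimati}: if it fails, choose data $(\bff^n,\bu^{in}_n,\widetilde{\bu}^n)$ with $\|\bff^n\|_{L^2(B_{r_0}\backslash\overline{\Omega})^n}+\|\bu^{in}_n\|_{H^1(B_r\backslash\overline{\Omega})^n}=1$ but $\|\widetilde{\bu}^n\|_{H^1(B_r\backslash\overline{D})^n}\to\infty$ as $\varepsilon\to0$, and rescale to $(\widetilde{\bff}^n,\widetilde{\bg}^{in},\widetilde{\bg})$ so that $\|\widetilde{\bg}\|_{H^1(B_r\backslash\overline{D})^n}=1$ while the rescaled sources tend to $0$. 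Since $D$ is rigid, $(\widetilde{\bg}|_{\Omega\backslash\overline{D}},\widetilde{\bg}^s|_{\mathbb{R}^n\backslash\overline{\Omega}})$ solves the transmission problem \eqref{mode:transmission 2} with \emph{Dirichlet} datum $\bp=\widetilde{\bg}|_{\partial D}$, $\bh_1=\widetilde{\bg}^{in}|_{\partial\Omega}$, $\bh_2=\ct_{\bnu}(\widetilde{\bg}^{in})|_{\partial\Omega}$, so Lemma~\ref{lem:unique proof 2} bounds $\|\widetilde{\bg}\|_{H^1(B_r\backslash\overline{D})^n}$ by $C\big(\|\widetilde{\bg}\|_{H^{1/2}(\partial D)^n}+\|\widetilde{\bff}^n\|_{L^2}+\|\widetilde{\bg}^{in}\|_{H^1}\big)$; but the trace theorem together with the auxiliary estimate applied to the rescaled data gives $\|\widetilde{\bg}\|_{H^{1/2}(\partial D)^n}\leq C\|\widetilde{\bg}\|_{H^1(D)^n}\leq C\sqrt{\varepsilon}\,(1+o(1))\to0$, so the whole right-hand side tends to $0$, contradicting $\|\widetilde{\bg}\|_{H^1(B_r\backslash\overline{D})^n}=1$. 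This proves \eqref{ineq:Estioutside 2}, and inserting it back into the auxiliary estimate yields \eqref{ineq:EstiinsideD 2}.

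The step I expect to be the main obstacle is, as in Case~1, not any isolated deep point but the precise accounting of the $\varepsilon$-powers: one has to verify that the weight $\varepsilon^{-2}$ on the elastic energy and the weight $\varepsilon^{-1}$ on the damping conspire to give exactly the factor $\sqrt{\varepsilon}$ claimed in \eqref{ineq:EstiinsideD 2}, and in particular that the lower-order $\|\widetilde{\bu}\|_{L^2(D)^n}^2$ term unavoidably left by Kron's inequality on $D$ does not degrade this — which is why the $L^2(D)$-bound is extracted first from the imaginary part and the $H^1(D)$-bound closed only afterwards for $\varepsilon\ll1$. A secondary subtlety, proper to Case~2, is that the datum transferred to $\partial D$ in the rescaled transmission problem is the trace of $\widetilde{\bg}$ rather than its conormal derivative, so the trace theorem (not Corollary~\ref{cor:conormal}) is what reduces $\|\widetilde{\bg}\|_{H^{1/2}(\partial D)^n}$ to the already-controlled interior bound.
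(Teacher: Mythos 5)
Your proof follows the paper's argument for this lemma step by step: the same Betti integration over $D$, $\Omega\backslash\overline{D}$, and $B_r\backslash\overline{\Omega}$, the same extraction of the $L^2(D)$ bound from the imaginary part (damping with weight $\varepsilon^{-1}$) and the gradient bound from the real part (energy with weight $\varepsilon^{-2}$), the same contradiction argument via Lemma~\ref{lem:unique proof 2} with the Dirichlet trace datum on $\partial D$, and the same final substitution to close \eqref{ineq:EstiinsideD 2}. If anything you are slightly tidier than the paper: you explicitly keep the lower-order $\|\widetilde{\bu}\|^2_{L^2(D)^n}$ term that Korn's inequality leaves behind and close it with the imaginary-part bound, and you consistently use the $\varepsilon^{-2}$ scaling of \eqref{eq:eff2_2}, whereas the paper's displayed inequality $\int_D(\mathcal{C}^0:\nabla\overline{\widetilde{\bu}}):\nabla\widetilde{\bu}\,\rmd\bx\geq C_0\,\varepsilon^{-1}\|\nabla\widetilde{\bu}\|^2_{L^2(D)^n}$ writes $\varepsilon^{-1}$ (an apparent typo) and suppresses the Korn remainder — both readings lead to the same $\sqrt{\varepsilon}$ exponent.
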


\begin{proof}
Multiplying $\mathcal{L}_{\mathcal{\widetilde{C}}}\,\mathbf{\widetilde{u}}+\omega^2\widetilde{\rho}\,\mathbf{\widetilde{u}}={\bff}$  by $\overline{\widetilde{\bu}}$ and integrating it over $D$, $\Omega\backslash \overline{D}$, $B_r\backslash\overline{\Omega}$, respectively. By adding up them and the transmissions on $\partial D$ and $\partial \Omega$, we have

\begin{align}\label{eq:inter3 2}
&-\int_D(\Ccal^0:\nabla\overline{\widetilde{\bu}}):\nabla\widetilde{\bu} \, \,\rmd\bx+\int_D \big( \eta_0\,\omega^2\,|\widetilde{\bu}|^2+\imath\varepsilon^{-1}\tau_0\,\omega^2\,|\widetilde{\bu}|^2\big)\,\rmd\bx-\int_{\Omega\backslash\overline{D}}(\Ccal(x):\nabla\overline{\widetilde{\bu}}):\nabla\widetilde{\bu} \,\rmd\bx\nonumber\\
&+\omega^2\int_{\Omega\backslash\overline{D}}\rho(\bx)|\widetilde{\bu}|^2\,\rmd\bx-\int_{B_r\backslash\overline{\Omega}}\,\,(\Ccal^e:\nabla\overline{\widetilde{\bu}^s}):\nabla\widetilde{\bu}^s \,\rmd\bx+ \int_{\partial B_r}\bnu\cdot(\Ccal^e:\nabla\widetilde{\bu}^s)\cdot\overline{\widetilde{\bu}^s} \,\,\rmd s(\bx)\nonumber\\
&+\int_{\partial {\Omega}}\bnu\cdot(\Ccal^e:\nabla\widetilde{\bu}^s)\cdot\overline{\bu^{in}} \,\,\rmd s(\bx)+\int_{\partial {\Omega}}\bnu\cdot(\Ccal^e:\nabla\bu^{in})\cdot\overline{\widetilde{\bu}^s} \,\rmd s(\bx)+\omega^2\,\int_{B_r\backslash\overline{\Omega}}\,\rho_e|\widetilde{\bu}^s|^2\,\rmd\bx\nonumber\\
&+\int_{\partial {\Omega}}\bnu\cdot(\Ccal^e:\nabla\bu^{in})\cdot\overline{\bu^{in}}\,\rmd s(\bx)=\int_{B_r\backslash\overline{\Omega}}\bff(x)\cdot \overline{\widetilde{\bu}^s}\,\rmd\bx.
\end{align}
Taking the real and imaginary parts of  \eqref{eq:inter3 2}, it is easy to obtain that
\begin{align}
\int_D(\Ccal^0:\nabla\overline{\widetilde{\bu}}):\nabla\widetilde{\bu} \, \rmd\bx&=\int_D \,\eta_0\,\omega^2|\widetilde{\bu}|^2\rmd\bx-\int_{\Omega\backslash\overline{D}}(\Ccal(\bx):\nabla\overline{\widetilde{\bu}}):\nabla\widetilde{\bu}\, \rmd\bx+\int_{\Omega\backslash\overline{D}}\omega^2\Re\rho|\widetilde{\bu}|^2 \,\rmd\bx\nonumber\\
&\quad-\int_{B_r\backslash\overline{\Omega}}(\Ccal^e:\nabla\overline{\widetilde{\bu}^s}):\nabla\widetilde{\bu}^s\, \rmd\bx+\Re\int_{\partial B_r}\bnu\cdot(\Ccal^e:\nabla\widetilde{\bu}^s)\cdot\overline{\widetilde{\bu}^s}\,\rmd s(\bx)\nonumber\\
&\quad+\Re\int_{\partial {\Omega}}\bnu\cdot[\Ccal^e:\nabla\widetilde{\bu}^s]\cdot\overline{\bu^{in}}\,\rmd s(\bx)
+\Re\int_{\partial {\Omega}}\bnu\cdot(\Ccal^e:\nabla\bu^{in})\cdot\overline{\widetilde{\bu}^s}\,\rmd s(\bx)\nonumber\\
&\quad+\Re\int_{\partial {\Omega}}\bnu\cdot(\Ccal^e:\nabla\bu^{in})\cdot\overline{\bu^{in}}\rmd\bx+\omega^2\,\rho_e\,\int_{B_r\backslash\overline{\Omega}}|\widetilde{\bu}^s|^2\,\rmd\bx\nonumber\\
&\quad-\Re\int_{B_r\backslash\overline{ \Omega}}\bff(\bx)\cdot \overline{\widetilde{\bu}^s}\,\rmd\bx\label{eq:inter4}
\end{align}
and
\begin{align}
\omega^2\varepsilon^{-1}\,\tau_0\int_D|\widetilde{\bu}|^2\,\rmd\bx&=-\omega^2\int_{\Omega\backslash\overline{D}}\Im\rho|\widetilde{\bu}|^2\,\rmd\bx -\Im\int_{\partial B_r}\bnu\cdot(\Ccal^e:\nabla\widetilde{\bu}^s)\cdot \overline{\widetilde{\bu}^s}\,\,\rmd s(\bx)\nonumber\\
&\quad -\Im\int_{\partial \Omega}\bnu\cdot[\Ccal^e:\nabla\widetilde{\bu}^s]\cdot \overline{\bu^{in}}\,\rmd s(\bx)-\Im\int_{\partial \Omega}\bnu\cdot(\Ccal^e:\nabla\bu^{in})\cdot \overline{\widetilde{\bu}^s}\,\rmd s(\bx)\nonumber\\
&\quad-\Im\int_{\partial \Omega}\bnu\cdot(\Ccal^e:\nabla\bu^{in})\cdot \overline{\bu^{in}}\,\rmd s(\bx)+\Im\int_{B_r\backslash\overline{\Omega}}\bff(\bx)\cdot\overline{\widetilde{\bu}^s}\,\rmd\bx.\label{eq:inter6 2}
\end{align}
Since $\mathcal{C}^0$ satisfies the uniform Legendre ellipticity condition \eqref{eq:ellip1} and $\lambda=\varepsilon^{-1}\lambda_0$, $\mu=\varepsilon^{-1}\mu_0$,
$$
\int_D(\Ccal^0:\nabla\overline{\widetilde{\bu}}):\nabla\widetilde{\bu} \,\, dx\geq C_0\, \varepsilon^{-1}\|\nabla\widetilde{\bu} \|^2_{L^2(D)^n},
$$
where $C_0$ only depends on $\lambda_0$ and $\mu_0$. And then we can directly obtain
\begin{align}
 \left\|\nabla\widetilde{\bu}\right\|^2_{L^2(D)^n}&\leq C_1\,\varepsilon\,\big(\left\|\widetilde{\bu}\right\|^2_{H^1(B_r\backslash\overline{D})^n}+\left\|\bu^{in}\right\|^2_{H^1(B_r\backslash\overline{\Omega})^n}+\left\|\bff\right\|^2_{L^2(B_r\backslash\overline{D})^n}\big),\label{eq:inter5 2}\\
\|\widetilde{\bu}\|^2_{L^2(D)^n}& \leq C_2\,\varepsilon\,\left( \|\widetilde{\bu}\|^2_{H^1(B_r\backslash\overline{D})^n} +\|\bu^{in}\|^2_{H^1(B_r\backslash\overline{\Omega})^n}+ \|\bff\|^2_{L^2(B_r\backslash\overline{\Omega})^n}\right),\label{eq:inter7 2}
\end{align}
where $C_1$, $C_2$ are positive constants not related to $\varepsilon$.
Thus,
\begin{align}\label{eq:inter8 2}
\|\widetilde{\bu}\|_{H^1(D)^n} \leq C \, \varepsilon^{1/2}\,\left( \|\widetilde{\bu}\|^2_{H^1(B_r\backslash\overline{D})^n}  +\|\bu^{in}\|^2_{H^1(B_r\backslash\overline{\Omega})^n}+ \|\bff\|^2_{L^2(B_r\backslash\overline{\Omega})^n}\right)^{1/2}.
\end{align}
As we did in proving \eqref{ineq:Estioutside}, we can construct two sets of data ($\bff^n$, $\bu_n^{in}$, $\widetilde{\bu}^n$) and ($\widetilde{\bff}^n$, $\widetilde{\bg}^{in}$, $\widetilde{\bg}$) as follows, where $\widetilde{\bu}^n$ is the unique solution of \eqref{eq:scattering+medium} with $\bff^n$ and $\bu_n^{in}$ as inputs and $\widetilde{\bg}$ is the unique solution of \eqref{eq:scattering+medium} with $\widetilde{\bff}^n$ and $\widetilde{\bg}^{in}$ as inputs,
\begin{align}\label{data:1 2}
\left\{ \begin{array}{ll}
\widetilde{\bff}^n=\dfrac{\bff^n}{\|\widetilde{\bu}^n\|_{H^1(B_r\backslash\overline{D})^n}},\quad \widetilde{\bg}=\dfrac{\widetilde{\bu}^n}{\|\widetilde{\bu}^n\|_{H^1(B_r\backslash\overline{D})^n}},\\[15pt]
\widetilde{\bg}^{in}=\dfrac{\bu^{in}_n}{\|\widetilde{\bu}^n\|_{H^1(B_r\backslash\overline{D})^n}},\quad \widetilde{\bg}^{s}=\dfrac{\widetilde{\bu}^{n,s}}{\|\widetilde{\bu}^n\|_{H^1(B_r\backslash\overline{D})^n}},\\[15pt]
\widetilde{\bu}^{n,s}= \widetilde{\bu}^n-\bu^{in}_n,\quad \quad\quad\,\,\widetilde{\bg}^{s}= \widetilde{\bg}-\widetilde{\bg}^{in},\\[5pt]
\|\bff^n\|_{L^2(B_{r_0}\backslash\overline{\Omega})^n}+\|\bu^{in}_n\|_{H^1(B_r\backslash\overline{\Omega})^n}=1	,\quad \\[5pt]
\|\widetilde{\bu}^n\|_{H^1(B_r\backslash\overline{D})^n}\to\infty, \quad \mbox{as}\quad \varepsilon\to 0.
\end{array}
 \right.
\end{align}
And then we have
\begin{align}
&\|\widetilde{\bg}\|_{H^1(B_r\backslash\overline{D})^n}=1,\quad \|\widetilde{\bff}^n\|_{L^2(B_{r_0}\backslash\overline{\Omega})^n}\to 0,\quad \|\widetilde{\bg}^{in}\|_{H^1(B_r\backslash\overline{\Omega})^n}\to 0 \quad\mbox{as}\quad \varepsilon\to 0^{+},\label{eq:A8 2}\\
&\|\widetilde{\bg}\|_{H^1(D)^n}
\leq C\,\varepsilon^{1/2}\,\small{\big(\|\widetilde{\bg}\|_{H^1(B_r\backslash\overline{D})^n} +  \|\widetilde{\bff}^n\|_{L^2(B_{r_0}\backslash\overline{\Omega})^n}  + \|\widetilde{\bg}^{in}\|_{H^{1}(B_r\backslash\overline{\Omega})^n} \big)}.\label{eq:A9 2}
\end{align}
From Lemma \ref{lem:unique proof 2}, $(\widetilde{\bg}\big|_{\Omega\backslash\overline{D}},\,\widetilde{\bg}^s\big|_{\mathbb{R}^n\backslash\overline{\Omega}})$ is the unique solution of \eqref{mode:transmission} with $\bp=\widetilde{\bg}\Big|_{\partial{D}}$, $\bh_1=\widetilde{\bg}^{in}\big|_{\partial{\Omega}}$, and $\bh_2=\ct_{\bnu}(\widetilde{\bg}^{in})\big|_{\partial{\Omega}}$ such that
 \begin{align*}
\|\widetilde{\bg}\|_{H^1(B_r\backslash\overline{D})^n}&\leq d\big(\big\|\widetilde{\bg}\|_{H^{1/2}(\partial{D})^n}+  \|\widetilde{\bff}^n\|_{L^2(B_{r_0}\backslash\overline{\Omega})^n}  + \|\widetilde{\bg}^{in}\|_{H^{1}(B_r\backslash\overline{\Omega})^n}      \big),\\
&\leq \widetilde{d}\big(\big\|\widetilde{\bg}\|_{H^{1}(D)^n}+  \|\widetilde{\bff}^n\|_{L^2(B_{r_0}\backslash\overline{\Omega})^n}  + \|\widetilde{\bg}^{in}\|_{H^{1}(B_r\backslash\overline{\Omega})^n}      \big),
 \end{align*}
where $d$ and $\widetilde{d}$ are positive constants not relying on $\varepsilon$. Hence, we can see that
$$\big\|\widetilde{\bg}\big\|_{H^1(B_{r}\backslash\overline{D})^n}\to 0\quad\mbox{ as}\quad \varepsilon\to 0,$$
which contradicts with the equality $\|\widetilde{\bg}\|_{H^1(B_{r}\backslash\overline{D})^n}=1$. Hence, the inequality \eqref{ineq:Estioutside 2} holds.

Next, we prove \eqref{ineq:EstiinsideD 2}. From \eqref{ineq:Estioutside 2} and  \eqref{eq:inter8 2}, it is easy to obtain that \eqref{ineq:EstiinsideD 2} holds. This completes the proof.


\end{proof}

\begin{prop}\label{th:solution_esti 2}
Suppose $\widetilde{\bu}\in H^1_{loc}(\mathbb{R}^n)^n$ is the solution to system \eqref{eq:scattering+medium} and $\bu\in H^1_{loc}(\mathbb{R}^n\backslash\overline{D})^n$ is the solution to system \eqref{eq:scattering1}. Then there exist a  constant $C$ such that the following estimate holds for $\varepsilon\,\ll 1$ and $r>r_0$:
\begin{align}\label{ineq:solution_estiam 2}
\|\widetilde{\bu}-\bu\|_{H^1(B_r\backslash\overline{D})^n}\leq C\,\varepsilon^{1/2}\left( \|\bu^{in}\|_{H^1(B_r\backslash\overline{\Omega})^n}  + \|\bff\|_{L^2(B_{r_0}\backslash\overline{\Omega})^n}\right).
\end{align}
\end{prop}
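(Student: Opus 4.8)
The plan is to follow the template of Proposition~\ref{th:solution_esti}, with the single essential change forced by the rigid boundary condition: the difference $\bv:=\widetilde{\bu}-\bu$ now inherits a \emph{Dirichlet} datum on $\partial D$ rather than a Neumann one, and the smallness of that datum is supplied by the interior estimate \eqref{ineq:EstiinsideD 2} instead of by a conormal-derivative estimate.

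First I would set $\bv=\widetilde{\bu}-\bu$ and $\bv^s=\widetilde{\bu}^s-\bu^s$; since $\widetilde{\bu}=\bu^{in}+\widetilde{\bu}^s$ and $\bu=\bu^{in}+\bu^s$ in $\mathbb{R}^n\backslash\overline{\Omega}$, one has $\bv^s=\bv$ there. Subtracting \eqref{eq:scattering1} from \eqref{eq:scattering+medium} and using that $(\mathcal{C},\rho)=(\widetilde{\mathcal{C}},\widetilde{\rho})$ on $\Omega\backslash\overline{D}$, that $(\mathcal{C},\rho)=(\mathcal{C}^e,\rho_e)$ outside $\Omega$, the vanishing of $\bff$ on $\Omega$, the transmission conditions for $\widetilde{\bu}$ across $\partial D$, and the fact that for the rigid obstacle $\bu\big|_{\partial D}=\mathbf{0}$, one verifies that $(\bv\big|_{\Omega\backslash\overline{D}},\,\bv^s\big|_{\mathbb{R}^n\backslash\overline{\Omega}})$ solves the transmission system \eqref{mode:transmission 2} with $\bff=\mathbf{0}$, $\bh_1=\mathbf{0}$, $\bh_2=\mathbf{0}$ and Dirichlet datum $\bp=\widetilde{\bu}\big|_{\partial D}$.

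Then I would apply the a priori estimate of Lemma~\ref{lem:unique proof 2} to this system, which gives $\|\bv\|_{H^1(B_r\backslash\overline{D})^n}\leq C\,\|\widetilde{\bu}\|_{H^{1/2}(\partial D)^n}$; the trace theorem on the bounded Lipschitz domain $D$ then yields $\|\widetilde{\bu}\|_{H^{1/2}(\partial D)^n}\leq C\,\|\widetilde{\bu}\|_{H^1(D)^n}$, and finally \eqref{ineq:EstiinsideD 2} of Lemma~\ref{le:prioriestimati 2} bounds $\|\widetilde{\bu}\|_{H^1(D)^n}$ by $C\varepsilon^{1/2}\big(\|\bu^{in}\|_{H^1(B_r\backslash\overline{\Omega})^n}+\|\bff\|_{L^2(B_{r_0}\backslash\overline{\Omega})^n}\big)$. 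Chaining these three inequalities produces \eqref{ineq:solution_estiam 2}.

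As for the main difficulty: within this proposition itself the argument is short and essentially routine, the only care being the bookkeeping that checks $\bv$ genuinely satisfies \eqref{mode:transmission 2} with vanishing source and vanishing $\partial\Omega$-data. The genuine work has already been carried out upstream in Lemma~\ref{le:prioriestimati 2}: it is the effective parameter scaling \eqref{eq:eff2_2}---stiffness blowing up like $\varepsilon^{-2}$ and damping like $\varepsilon^{-1}$---that forces $\widetilde{\bu}$ to be asymptotically negligible inside $D$, hence asymptotically to vanish on $\partial D$, which is exactly the rigid condition obeyed by $\bu$. In other words, the present proposition is the clean packaging step converting the interior decay estimate into the desired closeness of $\widetilde{\bu}$ and $\bu$ in $B_r\backslash\overline{D}$.
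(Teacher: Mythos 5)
Your proposal is correct and follows essentially the same route as the paper's proof: set $\bv=\widetilde{\bu}-\bu$, observe that $(\bv|_{\Omega\backslash\overline{D}},\bv^s|_{\mathbb{R}^n\backslash\overline{\Omega}})$ solves the rigid-obstacle transmission system with vanishing source and $\partial\Omega$-data and Dirichlet datum $\bp=\widetilde{\bu}|_{\partial D}$, then chain Lemma~\ref{lem:unique proof 2}, the trace theorem, and the interior estimate \eqref{ineq:EstiinsideD 2}. (You also correctly cite the rigid system \eqref{mode:transmission 2}, whereas the paper's proof writes \eqref{mode:transmission}; that appears to be a typo in the paper.)
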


\begin{proof}
Let $\bv=\widetilde{\bu}-\bu$, where $\widetilde{\bu}$ and $\bu$ are the total fields of system \eqref{eq:scattering+medium} and system \eqref{eq:scattering1}, respectively. We can easily verify that $(\bv\big|_{\Omega\backslash\overline{D}},\,\bv^s\big|_{\mathbb{R}^n\backslash\overline{\Omega}})$ is the unique solution of system \eqref{mode:transmission} with the boundary conditions: $\bff=\bh_1=\bh_2=0$, $\bp=\bv\big|_{\partial D}=\widetilde{\bu}\big|_{\partial D}$. By using Lemma \ref{lem:unique proof 2}, Trace Theorem and Lemma \ref{le:prioriestimati 2}, we obtain
\begin{align*}
\|\bv\|_{H^1(B_r\backslash\overline{D})^n}&\leq 
C_1\big\|\widetilde{\bu}\big\|_{H^{1/2}(\partial D)^n}\leq C_2\|\widetilde{\bu}\|_{H^{1}(D)^n}
\leq C\,\varepsilon^{1/2}\,\big(\|\bu^{in}\|_{\bH^1(B_r\backslash\overline{\Omega})}  + \|\bff\|_{L^2(B_{r_0}\backslash\overline{\Omega})^n}\big).
\end{align*}
The proof is complete.
\end{proof}

Using Lemma \ref{lem:unique proof 2} and Proposition \ref{th:solution_esti 2} we can establish Theorem \ref{thm:main1} for Case 2 by using similar  arguments of Theorem \ref{thm:main1} for Case 1. The detailed proof is skipped.

\section*{Acknowledgement}

The research of Z Bai is partially supported by the National Natural Science Foundation of China  under grant  11671337 and the Fundamental Research Funds for the Central Universities under grant  20720180008. The work  of H Diao was supported in part by the National Natural Science Foundation of China under grant 11001045 and the Fundamental Research Funds for the Central Universities under the grant 2412017FZ007.  The work of H Liu was supported by the startup fund from City University of Hong Kong and the Hong Kong RGC General Research Fund (projects 12301420, 12302919 and 12301218).

\end{document}